\theoremstyle{plain}
\newtheorem{master}{Master}[section]
\newtheorem{prop}[master]{Proposition}
\newtheorem{thm}[master]{Theorem}
\newtheorem{lem}[master]{Lemma}
\newtheorem{claim}[master]{Claim}
\theoremstyle{definition}
\newtheorem{defin}[master]{Definition}
\newtheorem{observation}[master]{Observation}
\theoremstyle{remark}
\newtheorem{remark}[master]{Remark}
\numberwithin{equation}{section}
\newcommand{\Ur}{\mathbb{U}}
\newcommand{\Rea}{\mathbb{R}}
\newcommand{\Nat}{\mathbb{N}}
\newcommand{\Rat}{\mathbb{Q}}
\newcommand{\Age}{\mathcal{K}}
\newcommand{\Lng}{\mathrm{L}}
\begin{document}
\title{Universal and ultrahomogeneous Polish metric structures}
\author{Michal Doucha}
\address{Institute of Mathematics, Academy of Sciences, Prague, Czech republic}
\email{m.doucha@post.cz}
\thanks{The research of the author was partially supported by grant IAA100190902 of Grant Agency of the Academy of Sciences of the Czech Republic}
\keywords{Urysohn universal metric space, Effros-Borel structure, Polish metric structure, Fra\" iss\' e theory}
\subjclass[2000]{03E15, 03C98, 54E50}
\begin{abstract}
We use Fra\" iss\' e theoretic methods to construct several universal and ultrahomogeneous Polish metric structures. Namely, universal and ultrahomogeneous Polish metric space equipped with countably many closed subsets of its powers, universal and ultrahomogeneous Polish metric space equipped with a closed subset of the product of itself and some fixed compact metric space, and universal and ultrahomogeneous Polish metric space equipped with an $L$-Lipschitz function, for an arbitrary positive $L$, to some fixed Polish metric space. These results are direct generalization of the classical result of P. Urysohn \cite{Ur}. Possible applications are discussed.
\end{abstract}
\maketitle
\section*{Introduction}
In 1927, P. S. Urysohn constructed a metric space $\Ur$ which is now called The Urysohn universal metric space (\cite{Ur}). It is a Polish metric space that is both universal and ultrahomogeneous for the class of all finite metric spaces. The universality means that every finite metric space can be isometrically embedded into $\Ur$ and the ultrahomogeneity means that any finite isometry $\phi: \{x_1,\ldots,x_n\}\subseteq \Ur\rightarrow \{y_1,\ldots,y_n\}\subseteq \Ur$ extends to an isometry $\bar{\phi}\supseteq \phi:\Ur\rightarrow \Ur$ on the whole space. These two properties imply that $\Ur$, in fact, contains an isometric copy of every separable metric space and that $\Ur$ is unique with these two properties up to isometry.

The aim of this paper is to enrich the Urysohn space with some additional structure so that this enriched Urysohn space is still universal and ultrahomogeneous for that specific (Polish) metric structure. The definition of Polish metric structures considered here is given at the end of this section. A related work has been done by W. Kubi\' s in \cite{Kub} (see also \cite{GK}).

Our initial motivation was to provide a general way of coding of such classes of Polish metric structures as standard Borel spaces. Let us say we are given some class of Polish metric structures and we would like to use methods of descriptive set theory to investigate (e.g. classify) this class. In order to use these methods we need to represent such a class as a Polish space or, it is sufficient, as a standard Borel space. Recall the definition of an Effros-Borel structure (see for example \cite{Ke}). Effors-Borel structure is an example of a standard Borel space that can serve in this direction. Let us illustrate it on examples.\\

\noindent {\bf Examples.}
\begin{itemize}
\item Since every Polish space $X$ is homeomorphic to a closed subset of $\Rea^\Nat$, the Effros-Borel space of $F(\Rea^\Nat)$ can be interpreted as a standard Borel space of all Polish spaces.
\item Recall the classical result of Banach and Mazur that every separable Banach space can be be embedded by a linear isometry into the separable Banach space $C([0,1])$, i.e. the Banach space of all real-continuous functions on $[0,1]$. Consider the following subset of the standard Borel space $F(C([0,1]))$, which can be checked to be Borel, $\mathrm{Subs}=\{X\in F(C([0,1])): X\text{ is a linear subspace}\}$. It is a standard Borel space of all separable Banach spaces (that has been used, for instance, by V. Ferenczi, A. Louveau and C. Rosendal in \cite{FLR} for a classification result of separable Banach spaces with the relation of linear isomorphism). There are a lot of Borel subsets of $\mathrm{Subs}$ that represent certain subclasses of separable Banach spaces (see \cite{Do} for examples).
\item Because of the properties of $\Ur$ the standard Borel space $F(\Ur)$ can serve as a coding of all Polish metric spaces. We remark that this approach was used by Gao and Kechris in \cite{GaKe} in their classification of Polish metric spaces up to isometry.

\end{itemize}
The Effros-Borel structure of $F(\mathcal{S})$, where $\mathcal{S}$ will be one of the structures we investigate here, should serve in a similar way. Let us state the main definitions of this chapter.
\begin{defin}[Polish metric structure]
Let $Z_1,\ldots,Z_k$ be a list of Polish metric spaces. A finite or countably infinite set $\mathcal{O}$ is called a signature if it consists of symbols for closed sets. Moreover, there is a function $a:\mathcal{O}\rightarrow ([0,\ldots,k]\times \Nat)^{<\omega}$; i.e. to each symbol from $\mathcal{O}$ it assigns a finite sequence of elements $(a,b)$ where $0\leq a\leq k$ and $b\in \Nat$. By $a_F(n,i)$, for $i\in\{1,2\}$, we denote the $i$-th coordinate of the $n$-th element of $a(F)$.

A Polish metric structure of signature $\mathcal{O}$ is a Polish metric space $(X,d)$ such that for every $F\in \mathcal{O}$ there is a closed set $F_X\subseteq Z_{a_F(1,1)}^{a_F(1,2)}\times\ldots \times Z_{a_F(|a(F)|,1)}^{a_F(|a(F)|,2)}$, where by $Z_0$ we denote $X$.
\end{defin}
\begin{defin}
Let $(X,d,\mathcal{O}_X)$ be a Polish metric space of some signature $\mathcal{O}$. We say that $(X,d,\mathcal{O}_X)$ is universal if for any Polish metric space $(Y,d,\mathcal{O}_Y)$ of the same signature $\mathcal{O}$ there is an isometric embedding $\phi:Y\hookrightarrow X$ that moreover reduces $F_Y$ into $F_X$ (for every $F\in \mathcal{O}$): i.e. for any $(y_1,\ldots,y_n)$, where $I\subset \{1,\ldots,n\}$ are the coordinates such that $y_i\in Y$ iff $i\in I$, we have $(y_1,\ldots,y_n)\in F_Y\Leftrightarrow (x_1,\ldots,x_n)\in F_X$, where $x_i=\phi(y_i)$ if $i\in I$ and $x_i=y_i$ otherwise.

We say that $(X,d,\mathcal{O}_X)$ is ultrahomogeneous if any isomorphism between two finite (metric) substructures $(F_1,d,\mathcal{O}_{F_1})$ and $(F_2,d,\mathcal{O}_{F_2})$ of $(X,d,\mathcal{O}_X)$ extends to an automorphism of the whole $(X,d,\mathcal{O}_X)$.
\end{defin}
Let us illustrate the universality and ultrahomogeneity on examples.\\

\noindent{\bf Examples.}
\begin{itemize}
\item If the signature $\mathcal{O}$ is empty then $(X,d,\mathcal{O}_X)$ is just the Urysohn universal metric space $\Ur$, i.e. space containing an isometric copy of every Polish (or just separable) metric space and with the property that every finite partial isometry extends to an isometry of the whole space.
\item Let us consider the case when the signature $\mathcal{O}_X$ contains a symbol for one closed subset $C$ of $X$. Then for any Polish metric space $(Y,d)$ equipped with some closed subset $D\subseteq Y$ there is an isometric embedding $\phi:Y\hookrightarrow X$ that maps $D$ into $C$, i.e. $\forall y\in Y (y\in D\Leftrightarrow \phi(y)\in C)$; in other words, $\phi(Y)\cap C=\phi(D)$. Moreover, for any two finite subspaces $F_1,F_2\subseteq X$ and an isometry $\phi:F_1\rightarrow F_2$ respecting the closed subset, i.e. $\phi(F_1\cap C)=F_2\cap C$, there is an extension to an isometry on the whole space $\phi\subseteq \bar{\phi}:X\rightarrow X$ that still respects the closed subset, i.e. $\bar{\phi}(C)=C$.
\item Let us consider the case when the signature $\mathcal{O}_X$ contains a symbol for a closed subset $C$ of $X\times Z$ where $Z$ is some fixed Polish metric space. Then for any Polish metric space $(Y,d)$ and a closed subset $D\subseteq Y\times Z$ there is an isometric embedding $\phi:Y\hookrightarrow X$ such that $\forall y\in Y\forall z\in Z ((y,z)\in D\Leftrightarrow (\phi(y),z)\in C)$. Moreover, for any two finite substructures $F_1,F_2$ and isometry $\phi$ between them respecting the structure, i.e. $\forall f\in F_1\forall z\in Z((f,z)\in C\Leftrightarrow (\phi(f),z)\in C)$, there is an extension $\bar{\phi}$ to the whole space still respecting the closed set $C$.
\item Let us consider the case when the signature $\mathcal{O}_X$ contains a symbol $f$ for a closed subset of $X\times Z$ and moreover $(X,d,\mathcal{O}_X)\models f\text{ is a graph of a continuous function}$, where $Z$ is some fixed Polish metric space. Then for any Polish metric space $(Y,d)$ equipped with a continuous function $g:Y\rightarrow Z$ (i.e. a Polish metric structure of that signature which also models that this closed set is in fact a graph of a continuous function) to that fixed space $Z$ there is an isometric embedding $\phi:Y\hookrightarrow X$ that maps the graph of $g$ into the graph of $f$; in other words, $\forall y\in Y(g(y)=f\circ \phi (y))$. Moreover, for any two finite metric substructures $F_1,F_2\subseteq X$ and an isometry $\phi$ between them that respects the continuous function, i.e. $\forall x\in F_1 (f(x)=f\circ \phi(x))$, there is an extension to the isometry on the whole space that still respects the continuous function.

\end{itemize}
In what follows we shall denote the Polish metric structures somewhat loosely. For instance the Polish metric structure with two closed sets would be denoted often as $(X,F_1,F_2)$ instead of $(X,d,F_X^1,F_X^2)$ where $F^1,F^2$ are two symbols for closed sets.
\begin{defin}[Almost universal and ultrahomogeneous structures]
Suppose now that the signature $\mathcal{O}$ on $(X,d)$ consists of countably many symbols for closed sets of the same type, e.g. countably many closed subsets of $X$ or countably many continuous functions (resp. graphs of them) from $X$ to some fixed metric spaces. In such a case we will usually not be able to maintain universality and ultrahomogeneity in the full strength. Let us have $\mathcal{O}$ enumerated as $\{O_n:n\in \Nat\}$. We say that $(X,d,(O_n)_{n\in \Nat})$ is almost universal and ultrahomogeneous if for any Polish metric space $(Y,d,(F_n)_{n\in \Nat})$, where $(F_n)$ are of the same type as $(O_n)$ there is an isometric embedding $\phi:Y\hookrightarrow X$ and an injection $\pi:\Nat\rightarrow \Nat$ such that $\phi$ maps $F_n$ into $O_{\pi(n)}$. Moreover, let $F_1,F_2$ be two finite subspaces of $X$ such that there is a finite isometry $\phi$ between $F_1$ and $F_2$ and two sets of indices $\{k_1,\ldots,k_n\}\subseteq \Nat$ and $\{l_1,\ldots,l_n\}\subseteq \Nat$ such that $\phi$ maps the restriction $O_{k_i}\upharpoonright F_1$ into the restriction $O_{l_i}\upharpoonright F_2$, for all $i\leq n$. Then there is an isometry $\bar{\phi}\supseteq \phi$ of the whole space $X$ extending $\phi$ and a bijection $\pi:\Nat\rightarrow \Nat$, such that $\pi(k_i)=l_i$ for $i\leq n$, such that $\bar{\phi}$ maps $O_m$ into $O_{\pi(m)}$ for all $m\in \Nat$.
\end{defin}
We remark that in all cases the underlying Polish metric space $X$ for a given structure is isometric to the Urysohn universal space $\Ur$, thus from now on we will always denote it as $\Ur$. We will comment on this in Remark \ref{isUrysohn} after the proof of Theorem \ref{main}.\\

\noindent {\bf Notational convention.} For any metric space $X$ we will denote the metric as either $d_X$ but more often, when there is no danger of confusion, just as $d$. When working with a metric on a product of metric spaces we always consider the sum metric, i.e. $d((x_1,x_2),(y_1,y_2))=d(x_1,y_1)+d(x_2,y_2)$.

We usually denote tuples $(x_1,\ldots,x_m)$, for an arbitrary $m\in \Nat$ clear from the context, by $\vec{x}$. When $\phi$ is some mapping we denote $(\phi(x_1),\ldots,\phi(x_m))$ by $\phi^m(\vec{x})$.
\section{Universal closed relations}
In this section we consider Polish metric spaces equipped with closed relations on their products. Our starting theorem is the following.
\begin{thm}\label{basic}
Let $n_1\leq \ldots \leq n_m$ be an arbitrary non-decreasing sequence of natural numbers. Then there exist closed relations (subsets) $F_{n_i}\subseteq \Ur^{n_i}$, for $i\leq m$, such that the structure $(\Ur,F_{n_1},\ldots,F_{n_m})$ is universal and ultrahomogeneous and it is unique (up to isometry preserving the relations) with this property.
\end{thm}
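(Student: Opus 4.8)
The plan is to realize $(\Ur,F_{n_1},\ldots,F_{n_m})$ as the completion of a Fra\"iss\'e limit of a suitable class of finite ``rational'' structures. The first idea — run Fra\"iss\'e theory on finite rational metric spaces $A$ carrying relations $R_{n_i}\subseteq A^{n_i}$ and then complete — does not work directly: the extension property of the limit forces each $R_{n_i}$ and its complement to be dense, so the closure of $R_{n_i}$ in $\Ur^{n_i}$ becomes all of $\Ur^{n_i}$. I would fix this by carrying along a Lipschitz witness. Let $\Age$ be the class of structures $A=(A,d^A,\rho_1^A,\ldots,\rho_m^A)$ with $(A,d^A)$ a finite rational metric space and each $\rho_i^A:A^{n_i}\to\Rat_{\ge 0}$ a $1$-Lipschitz function for the sum metric on $A^{n_i}$; morphisms are isometric injections preserving each $\rho_i$ exactly. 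The point is that a $1$-Lipschitz function extends continuously to any completion, so the zero set $R_{n_i}^A:=(\rho_i^A)^{-1}(0)$ will survive the completion as a genuine, and proper, closed set.

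First I would check that $\Age$ is a Fra\"iss\'e class: it is countable up to isomorphism, heredity is immediate, joint embedding is obtained by a disjoint union with all cross-distances equal to a large enough rational constant (extending the $\rho_i$ by the McShane--Whitney formula), and amalgamation — the one point with content — uses the classical metric amalgam $d(a,b)=\min_{c\in C}\big(d^A(a,c)+d^B(c,b)\big)$ over the shared part $C$, together with the largest $1$-Lipschitz extension of the $\rho_i$'s from $A^{n_i}\cup B^{n_i}$ to the amalgamated power. Everything stays rational, the two legs are genuine embeddings, and the only slightly delicate verification is that $\rho_i$ is already $1$-Lipschitz on $A^{n_i}\cup B^{n_i}$, which follows by choosing the witnesses in $C$ coordinatewise.

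Let $\mathbf{M}=(M,d,\rho_1^M,\ldots,\rho_m^M)$ be the Fra\"iss\'e limit and $\Ur$ the completion of $(M,d)$. Since every finite rational metric space underlies some member of $\Age$ (take $\rho_i\equiv 1$), the age of the metric reduct of $\mathbf{M}$ is all finite rational metric spaces, so $\Ur$ is the Urysohn space (this is the content of Remark~\ref{isUrysohn}). Each $\rho_i^M$ extends to a continuous $\bar\rho_i:\Ur^{n_i}\to\Rea_{\ge 0}$; set $F_{n_i}:=\bar\rho_i^{-1}(0)$, which is closed, nonempty and proper (because $\Age$ has members in which $R_{n_i}$ is, respectively, nonempty and empty). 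For universality, given a Polish metric structure $(Y,d,F_1^Y,\ldots,F_m^Y)$ I put $\rho_i^Y:=d(\cdot,F_i^Y)$, fix a countable dense subset of $Y$, approximate $Y$ by an increasing chain of finite rational $\rho$-substructures (rationalizing the metric and the $\rho$-values while keeping each $F_i^Y$ as the exact zero set), realize these coherently in $\mathbf{M}$ via its extension property by the standard Urysohn-type limiting argument (each image point a Cauchy limit in $M$), and let $\phi:Y\hookrightarrow\Ur$ be the resulting isometry. Continuity of the $\bar\rho_i$ then forces $\bar\rho_i\circ\phi^{n_i}=\rho_i^Y$, which is exactly the required equivalence $\vec y\in F_i^Y\Leftrightarrow\phi^{n_i}(\vec y)\in F_{n_i}$.

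For ultrahomogeneity I would back-and-forth over a countable dense subset of $\Ur$, the engine being a one-point extension lemma for relation-preserving finite partial isometries: given such a $g$ and $x\in\Ur$, find $y\in\Ur$ with $g\cup\{(x,y)\}$ still relation-preserving. \emph{This lemma is where the real work lies.} An isomorphism of finite substructures of $(\Ur,F_{n_i})$ need not respect the witnesses $\rho_i^M$, so the configuration must first be re-coordinatized with fresh rational witnesses — which exist, since after rationalizing the metric one may take $\rho_i=d(\cdot,F_{n_i}\cap(\mathrm{dom}\,g)^{n_i})$ and transport them along $g$ — before the extension property of $\mathbf{M}$ can be invoked; and then one must realize \emph{simultaneously} the exact distance constraints, the closed constraints ``$\in F_{n_i}$'' (make $y$ a Cauchy limit of rational points whose relevant image-tuples carry witness $0$) and the open constraints ``$\notin F_{n_i}$'' (keep those witnesses bounded below by a fixed positive rational), which is a careful Urysohn-style diagonalization. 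Finally, uniqueness up to relation-preserving isometry follows from universality and ultrahomogeneity by the usual back-and-forth between two such structures.
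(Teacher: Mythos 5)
Your construction is essentially the one the paper uses: the remark following the proof of Proposition \ref{main_OPE} describes exactly your class $\Age$ --- finite rational metric spaces carrying, for each $i\leq m$, a rational $n_i$-ary function that is $1$-Lipschitz for the sum metric and is read as the distance to the sought closed set --- and your diagnosis of why bare relations would be destroyed by the extension property is precisely the (implicit) motivation for that choice. The Fra\" iss\' e verification, the identification of the completion with $\Ur$, the recovery of $F_{n_i}$ as a zero set, and the reduction of universality and uniqueness to a one-point extension property all follow the paper's route. The quantitative heart --- building a Cauchy sequence in the rational limit whose witness values converge to prescribed real values while staying consistent with the points already placed (the paper's Lemma \ref{metriclem} together with the proof of Proposition \ref{main_OPE}) --- is deferred in your sketch, but you point at the right mechanism.

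The one step that would fail as written is the ultrahomogeneity argument. You propose to extend an arbitrary finite isometry $g$ that merely preserves membership in the $F_{n_i}$, after re-coordinatizing with the \emph{local} witnesses $\rho_i=d(\cdot,F_{n_i}\cap(\mathrm{dom}\,g)^{n_i})$. This cannot work: any surjective isometry $\bar g$ of $\Ur$ with $\bar g(F_{n_i})=F_{n_i}$ necessarily preserves the \emph{global} distance functions $d(\cdot,F_{n_i})$, whereas a membership-preserving finite isometry need not --- take singletons $\{x\}$ and $\{y\}$ with $x,y\notin F_{n_1}$ but $d(x,F_{n_1})\neq d(y,F_{n_1})$; the map $x\mapsto y$ preserves membership yet admits no extension to an automorphism of the structure. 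So the isomorphisms of finite substructures that one can hope to extend are exactly those preserving the restrictions of the global distance functions; this is what the paper's class $\bar{\Age}$ and its notion of embedding encode, and it is the data the back-and-forth must carry at every stage. (Your prescription of keeping the witnesses of the ``open'' constraints bounded below by some positive rational is also too weak on its own: without matching the exact distances on a dense set, the limit isometry need not reduce $F_{n_i}$ off that dense set.) With the induced structure on finite subsets taken to be the global distance functions $d(\cdot,F_{n_i})$ rather than your local re-coordinatization, the rest of your back-and-forth goes through as in the paper.
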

Instead of giving a proof of this theorem we prove the theorem below which is ``almost" more general. In remarks after the proof of Theorem \ref{main} we indicate how to modify the proof so that it works also for Theorem \ref{basic}.
\begin{thm}\label{main}
There exists an almost universal and ultrahomogeneous structure $(\Ur, (F_m^n)_{n,m\in \Nat})$ where $F_m^n\subseteq \Ur^n$ is a closed $n$-ary relation (i.e. a closed subset of the $n$-th power of $\Ur$). It is also unique with this property (up to permutation of the set of $n$-ary relations for each $n$ and isometry preserving the relations).
\end{thm}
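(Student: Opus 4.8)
The plan is to run a Fra\"iss\'e-style construction in the category of finite metric spaces each equipped with a system of finitely many ``named'' relations, where the relations come from a fixed countable pool of symbols $\{R_m^n : n,m\in\Nat\}$ with $R_m^n$ being an $n$-ary relation symbol. More precisely, I would work with the class $\Age$ of all structures $(A,d,(R_m^n{\upharpoonright}A)_{(n,m)\in S})$ where $A$ is a finite metric space with rational distances, $S\subseteq \Nat\times\Nat$ is finite, and each named relation is an arbitrary subset of $A^n$. Morphisms are isometric embeddings that respect the named relations in the sense of the universality clause of the Almost universal and ultrahomogeneous definition, allowing the index set to grow and the relation symbols to be renamed by an injection on indices. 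The first step is to check that this class, under these morphisms, is essentially countable (up to isomorphism), closed under substructures, and has the joint embedding and amalgamation properties; amalgamation of metric spaces is the classical Urysohn amalgamation (take the sum/max of the two metrics subject to the triangle inequality, realized e.g. by the shortest-path metric on the glued space with rational perturbations), and on the relational side amalgamation is free — over the common part we simply take the union of the (renamed) relations and leave everything else out of every relation, which is why the ``almost'' (renaming of indices, growing signature) is needed rather than strict amalgamation.

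Second, I would build the generic structure as the union of an increasing chain $A_0\subseteq A_1\subseteq\cdots$ through $\Age$ constructed by a bookkeeping argument that, at each stage, realizes every one-point metric extension over every finite subset (to force the underlying space to be Urysohn) and every finite relational extension over every finite named subconfiguration (to force the relational genericity). Then set $\Ur$ to be the completion of $\bigcup_k A_k$, and for each pair $(n,m)$ let $F_m^n$ be the \emph{closure} in $\Ur^n$ of the union of the corresponding named relations along the chain — here one must be careful that distinct symbols used at different stages get routed into distinct $F_m^n$'s via the injections produced by bookkeeping, so I would in fact fix once and for all a bijection between the abstract pool of symbols appearing in the construction and the target family $(F_m^n)$. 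Taking closures is harmless for the back-and-forth because at the finite level we only ever asserted membership or non-membership of tuples with pairwise positive distances, and density of $\bigcup A_k$ plus the fact that we are closing up only the ``positive'' part means a limit tuple lies in $F_m^n$ iff it is approximated by asserted-in tuples; non-membership at the finite combinatorial level is preserved in the sense needed for embeddings of arbitrary Polish structures (whose relations are already closed).

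Third comes the verification of almost universality: given a Polish metric structure $(Y,d,(E_k)_{k\in\Nat})$ with each $E_k$ a closed $n_k$-ary relation, take a countable dense rational-distance substructure $Y_0\subseteq Y$, write it as an increasing union of finite pieces, and embed it into $\bigcup A_k$ stage by stage using the genericity of the construction (each finite piece, with its finitely many relations restricted, is an object of $\Age$, and the extension step provides the next embedding and the index matching $\pi$); passing to completions gives the isometric embedding $\phi:Y\hookrightarrow\Ur$, and closedness of the $E_k$ together with the definition of $F_m^n$ as a closure yields the required biconditional $(\,\vec y\in E_k \iff \phi^{n_k}(\vec y)\in F_{\pi(k)}^{n_k}\,)$ on all of $Y$, not just $Y_0$. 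Almost ultrahomogeneity and uniqueness follow from the standard back-and-forth: given a finite partial isometry of $\Ur$ matching $O_{k_i}{\upharpoonright}F_1$ to $O_{l_i}{\upharpoonright}F_2$, approximate its domain and range by rational-distance finite substructures inside $\bigcup A_k$, alternately extend using genericity while simultaneously building the bijection $\pi$ on the index set $\Nat$ (pairing up as-yet-unused $n$-ary symbols arbitrarily to keep $\pi$ total and bijective on each $\Nat$-block), and take the limit, which is an isometry of $\bigcup A_k$ respecting the relations symbol-wise and hence extends to the claimed automorphism $\bar\phi$ of $\Ur$.

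The main obstacle I expect is the interaction between metric completion and closedness of the relations: one must arrange the construction so that $F_m^n$, defined as the closure of a countable union of finite relations over a dense set, genuinely ``reflects'' back down — i.e.\ that for tuples from the dense set $F_m^n$ contains no unintended extra points, and that for an arbitrary Polish structure the biconditional survives passage to the completion. This forces the bookkeeping to be set up so that at each finite stage we decide, for the current finite configuration, \emph{both} which tuples are put into each named relation \emph{and} (implicitly) that nearby rational tuples are kept out unless later explicitly added, so that the limit relation is exactly the closure of what was asserted and nothing more; handling the non-membership side correctly, in the presence of countably many relations and the permitted renaming of indices, is the delicate point, and it is precisely what makes the statement ``almost'' universal rather than universal in the strict sense of Theorem \ref{basic}.
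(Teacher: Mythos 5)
There is a genuine gap, and it sits exactly at the point you yourself flag as delicate. In your class the finite structures carry only membership data: a tuple is either in a named relation or it is not, and your morphisms preserve only this. Such structures cannot express the statement ``this tuple is at distance at least $q$ from the relation,'' so nothing in your setup prevents the generic object from placing, arbitrarily close to any tuple declared out of $R_m^n$, new tuples declared in; indeed genericity forces this, since for every finite $A$ with $\vec a\notin R_m^n$ and every $\varepsilon>0$ there is a legal extension (free amalgamation poses no obstruction) adding a tuple belonging to $R_m^n$ within $\varepsilon$ of $\vec a$, and the limit must realize it. Consequently the set of asserted-in tuples becomes dense in $U^n$, its closure $F_m^n$ is all of $\Ur^n$, and universality collapses: a Polish structure $(Y,E)$ with $\emptyset\neq E\subsetneq Y$ closed cannot satisfy $y\in E\Leftrightarrow\phi(y)\in F_m^n=\Ur$. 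Your proposed remedy --- that the bookkeeping ``implicitly'' keeps nearby rational tuples out unless later explicitly added --- is not available in your category: ``kept out at distance at least $q$'' is not part of the finite data and is not preserved by your morphisms, so nothing recorded at stage $k$ constrains stage $k+1$ from adding in-tuples arbitrarily close to a previously excluded one.

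This is precisely why the paper does not work with bare relations but encodes each prospective closed set by a rational-valued function $p_m^n$ on $n$-tuples, interpreted as the distance (in the sum metric) to the set and subject to the Kat\v etov-type condition $p_m^n(\vec a)\leq p_m^n(\vec b)+d(\vec a,\vec b)$; the closed set is then the closure of the zero set of the limit function. This quantitative data is what lets the non-membership half of the biconditional survive completion: the almost-universality argument needs $d(\vec d,G_m^n)=d_\Ur(\vec d',F_{\pi_n(m)}^n)$ on the dense set, not merely preservation of membership. It is also what makes your back-and-forth step meaningful: almost ultrahomogeneity requires an extension property over arbitrary finite subsets of $\Ur$, whose points, mutual distances, and distances to the relations are irrational, not only over finite subsets of the countable limit $U$. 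The bulk of the paper's proof (Lemma \ref{metriclem}, Proposition \ref{main_OPE} and the claim inside its proof) is exactly the Cauchy-sequence approximation that upgrades the rational one-point extension property to this real-valued one, and your phrase ``approximate the domain and range by rational-distance finite substructures'' presupposes that the approximating structures can record approximate distances to the relations --- which again requires distance-function predicates rather than membership relations. So the overall Fra\"iss\'e strategy is right, but without the distance-function encoding the construction does not produce the asserted object, and the key technical content of the theorem is missing.
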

\begin{remark}
Let us elaborate more on the statement of the theorem. Let $(X,d)$ be a Polish metric space equipped with closed sets $G_m^n$, for all $m,n\in \Nat$, where $G_m^n\subseteq X^n$. Then there exist an isometric embedding $\psi:X\hookrightarrow \Ur$ and injections $\pi_n:\Nat\rightarrow \Nat$, for each $n\in \Nat$, such that $\forall n,m\in \Nat (\psi(X)^n\cap F_{\pi_n(m)}^n=\psi^n(G_m^n))$, or in other words $\forall n,m\in \Nat\forall \vec{x}\in X^n (\vec{x}\in G_m^n\Leftrightarrow \psi^n(\vec{x})\in F_{\pi_n(m)}^n)$.

In particular, $\psi^n: G_m^n\hookrightarrow F_{\pi_n(m)}^n$ is an isometric embedding.

Moreover, let $M_1,M_2\subseteq \Ur$ be two finite metric subspaces, some $n_M\leq |M_1|=|M_2|$, for each $n\leq n_M$ there are finite sets of indices $I^{M_1}_n,I^{M_2}_n\subseteq \Nat$ such that $|I^{M_i}_n|=n_M-n+1$, for $i\in \{1,2\}$, and
for each $n\leq n_M$ there are bijections $\pi_n:I^{M_1}_n\rightarrow I^{M_2}_n$ and an isometry $\psi:M_1\rightarrow M_2$ such that $\forall n\leq n_M\forall m\in I^{M_1}_n\forall \vec{x}\in M_1^n(\vec{x}\in F_m^n\Leftrightarrow \psi^n(\vec{x})\in F_{\pi_n(m)}^n)$; i.e. $\psi$ reduces the closed relation $F_m^n$ into $F_{\pi_n(m)}^n$. Then there are an isometry $\bar{\psi}:\Ur\to \Ur$ and bijections $\bar{\pi_n}:\Nat\rightarrow \Nat$, for each $i\in \Nat$, such that $\forall n,m\in \Nat\forall \vec{x}\in \Ur^n (\vec{x}\in F_m^n\Leftrightarrow \bar{\psi}^n(\vec{x})\in F_{\bar{\pi_n}(m)}^n)$, and $\bar{\psi}$ extends $\psi$ and $\bar{\pi_n}$ extends $\pi_n$ for each $n\leq n_M$.
\end{remark}
We will construct these sets along with the underlying metric space (universal Urysohn space) as a Fra\" iss\' e limit of a certain countable class $\Age$ of finite structures. This is basically also an original method of construction of the Urysohn universal space eventhough the general Fra\" iss\' e theory did not exist at that time! We note that there is another construction of the Urysohn space due to M. Kat\v etov (\cite{Kat}).

Let us make another notational convention here. In the languages of structures that we will use there will always be defined (partial) functions into some fixed countable set, e.g. a function with rational values. It is clear that each such function can be replaced by countably many predicates; for example, a rational function $f$ can be replaced by predicates $f_q$, for each $q\in \Rat$, and then we could demand that for each element $a$ of our structure there is precisely (or at most) one $q\in \Rat$ such that $f_q(a)$ holds. We will always implicitly assume this.\\

Let $\Lng$ be a countable language consisting of $n$-ary $p_m^n$ functions with values in nonnegative rationals for every pair $m,n\in \Nat$ and binary function $d$ with values in nonnegative rationals. For any structure $A$ we will usually write just $p_m^n$ (or $d$) on $A$ instead of $(p^A)_m^n$ (or $d^A$). However, we may use the latter in few cases where there is a possibility of confusion.
\begin{defin}[The class $\Age$]\label{first_age}
A finite structure $A$ (we will not notationally distinguish a structure and its underlying set) for the language $\Lng$ of cardinality $k>0$ belongs to $\Age$ if the following conditions are satisfied:
\begin{enumerate}
\item $A$ is a rational metric space; i.e. 
\begin{itemize}
\item $d$ is a total function (defined on all pairs) on $A$
\item $\forall x,y\in A(d(x,y)=d(y,x))$
\item $\forall x,y\in A (d(x,y)=0\Leftrightarrow x=y)$
\item $\forall x,y,z\in A (d(x,y)\leq d(x,z)+d(z,y))$
\end{itemize}
Thus we will interpret $d$ as a metric.
\item There is some $n_A\leq k$ (recall that $k$ is the cardinality of $A$) such that for every $n\leq n_A$ and $m\leq n_A+1-n$ $p_m^n$ is a total function on $A$; on the other hand, for $n>n_A$ or $m>n_A+1-n$ $p_m^n$ is defined on no $n$-tuple from $A$; i.e.\\

$\exists n_A\leq k$
\begin{itemize}
\item $\forall n,m\in \Nat \forall \vec{a}\in A^n (p_m^n(\vec{a})\text{ is defined}\Leftrightarrow n\leq n_A\wedge m\leq n_A+1-n)$
\end{itemize}
We consider $p_m^n$ as a function to rationals with an interpretation that it gives a rational distance (in a ``sum" metric on $A^n$) of an $n$-tuple from one of the desired set $F_{m'}^n$. We note that $m$ and $m'$ will not necessarily be equal.
\item In order to satisfy the joint embedding property and the amalgamation property we must put some additional restrictions on these structures.
\begin{itemize}
\item $\forall m,n\in \Nat (n\leq n_A\wedge m\leq n_A+1-n\Rightarrow \forall \vec{a},\vec{b}\in A^n (p_m^n(\vec{a})\leq p_m^n(\vec{b})+d(\vec{a},\vec{b}))$
\end{itemize}
The previous formula is interpreted as follows. Consider the ''sum" metric $d$ on $A^n$, i.e. $d(\vec{a},\vec{b})=d(a_1,b_1)+\ldots +d(a_n,b_n)$. The function $p_m^n$ assigns to each $n$-tuple a non-negative rational. We interpret this function as a distance function from a fixed closed set in the sum metric. The previous formula says that a distance of some $n$-tuple from this closed set must be less or equal to the sum of a distance of another $n$-tuple from the same closed set and the distance between these two $n$-tuples. In particular, if this distance is $0$ for some $n$-tuple $\vec{a}$, i.e. we demand it will lie in the closed set, then this distance for some other $n$-tuple $\vec{b}$ must be less or equal to the distance between $\vec{a}$ and $\vec{b}$.
\end{enumerate}
\end{defin}
There is still one more condition which we must demand on these structures in order to satisfy the amalgamation property and to have only countably many isomorphism types of finite structures. We specify when we consider two structures to be isomorphic and what an embedding of one structure into another is. Informally, an isomorphism between two structures does not respect the enumeration of the rational functions $p_m^n$ for every power $n$, i.e. for example we consider structures $A=\{a_1,a_2\}$ and $B=\{b_1,b_2\}$ such that $p_1^1(a_1)=q$, $p_1^1(a_2)=0$, $p_2^1(a_1)=h$, $p_2^1(a_2)=0$ and $p_1^2$ is equal to $0$ on all pairs, and $p_1^1(b_1)=h$, $p_1^1(b_2)=0$, $p_2^1(b_1)=q$, $p_2^1(b_2)=0$ and $p_1^2$ is equal to $0$ on all pairs to be isomorphic although the roles of $p_1^1$ and $p_2^1$ are switched in these two structures.

The precise definition follows.
\begin{defin}[Isomorphism and embedding]\label{defiso}
An isomorphism between two finite structures $A,B$ in the language $\Lng$ is a pair $\phi,(\pi^\phi_n))$ where $\phi$ is an isometry between $A$ and $B$ for every $n\leq n_A$($=n_B$) $\pi^\phi_n:\{1,\ldots,n_A+1-n\}\rightarrow \{1,\ldots,n_B+1-n\}$ is a permutation such that $$\forall n\leq n_A\forall m\leq n_A+1-n \forall \vec{a}\in A^n$$ $$(p_m^n(\vec{a})=q\Leftrightarrow p_{\pi_n(m)}^n(\phi^n(\vec{a}))=q)$$ Two structures are isomorphic if there exists an isomorphism (pair) between them.

Similarly, an embedding of a structure $A$ into a structure $B$ is a pair $(\phi,(\pi_n))$ such that $\phi:A\hookrightarrow B$ is an isometric embedding  and for every $n\leq n_A$ ($\leq n_B$) $\pi_n: \{1,\ldots,n_A+1-n\}\rightarrow \{1,\ldots,n_B+1-n\}$ is an injection such that $$\forall n\leq n_A\forall m\leq n_A+1-n \forall \vec{a}\in A^n$$ $$(p_m^n(\vec{a})=q\Leftrightarrow p_{\pi_n(m)}^n(\phi^n(\vec{a}))=q)$$ 
\end{defin}

Now we must prove that $\Age$ is countable, satisfies the hereditary, joint embedding and amalgamation property.
\begin{lem}\label{isfraisse}
$\Age$ is a Fra\" iss\' e class.
\end{lem}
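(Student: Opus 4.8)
The plan is to verify the four clauses defining a Fra\"iss\'e class: that $\Age$ has only countably many isomorphism types, and that it has the hereditary property (HP), the joint embedding property (JEP), and the amalgamation property (AP). The first three are routine; the amalgamation property carries essentially all the weight.

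\emph{Countability, HP and JEP.} A structure $A\in\Age$ of cardinality $k$ may be placed on $\{1,\dots,k\}$ and is then determined by the finitely many rational values of $d$, by the integer $n_A\le k$, and by the finitely many functions $p_m^n\colon\{1,\dots,k\}^n\to\Rat_{\ge0}$ (for $n\le n_A$, $m\le n_A+1-n$); as there are only countably many such data, $\Age$ has countably many isomorphism types. For HP, given $A\in\Age$ and a nonempty $S\subseteq A$, put $n_S:=\min(|S|,n_A)$ and interpret $p_m^n$ on $S$ as the restriction of $p_m^n$ on $A$ to $S^n$; conditions (1)--(3) of Definition~\ref{first_age} descend verbatim (clause (3) because $S^n\subseteq A^n$), and $(\mathrm{incl},(\mathrm{id}))$ is an embedding of this structure into $A$ in the sense of Definition~\ref{defiso}. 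For JEP, apply AP over the one-point structure with $n_A=0$, which carries no function symbols and hence embeds into every member of $\Age$ (equivalently, run the construction below with the two metric spaces placed at a large constant rational distance apart).

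\emph{AP, the underlying set.} Let $e_B=(\phi_B,(\pi^B_n))\colon A\hookrightarrow B$ and $e_C=(\phi_C,(\pi^C_n))\colon A\hookrightarrow C$ be embeddings, so $n_A\le n_B$, $n_A\le n_C$, $|A|\le|B|$, $|A|\le|C|$. On underlying sets take the pushout $D_0=B\cup_A C$ identifying $\phi_B(a)$ with $\phi_C(a)$, with the standard metric amalgam $d(b,c):=\min_{a\in A}\bigl(d_B(b,\phi_B(a))+d_C(\phi_C(a),c)\bigr)$ for $b\in B$ and $c\in C$ outside the common copy of $A$; this is a rational metric extending $d_B$ and $d_C$, by the usual verification. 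Adjoin finitely many new points, all at a common large rational distance from one another and from $D_0$, to obtain a rational metric space $D\supseteq D_0$ with $|D|$ as large as we wish, and fix $n_D$ with $\max(n_B,n_C)\le n_D\le|D|$ and $n_D\ge n_B+n_C-n_A$ (possible after enough padding). Then at every arity $n\le n_D$ there are at least enough function symbols to accommodate a ``shared'' block indexed by $\{1,\dots,n_A+1-n\}$ (when $n\le n_A$), a disjoint ``$B$-block'' for the arity-$n$ functions of $B$ outside $\mathrm{im}\,\pi^B_n$, and a disjoint ``$C$-block'' for those of $C$ outside $\mathrm{im}\,\pi^C_n$. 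Fixing injections $\rho^B_n,\rho^C_n$ carrying out this assignment with $\rho^B_n\circ\pi^B_n=\rho^C_n\circ\pi^C_n$ equal to the inclusion of $\{1,\dots,n_A+1-n\}$ into the shared block makes $f_B:=(\mathrm{incl},(\rho^B_n))$ and $f_C:=(\mathrm{incl},(\rho^C_n))$ satisfy $f_B\circ e_B=f_C\circ e_C$.

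\emph{AP, interpreting the symbols.} A shared symbol (coming from the $m$-th arity-$n$ function of $A$) must restrict on $B^n$ to the function of $B$ indexed by $\pi^B_n(m)$ and on $C^n$ to the function of $C$ indexed by $\pi^C_n(m)$; these agree on $B^n\cap C^n=(\text{copy of }A)^n$ because $e_B,e_C$ are embeddings, so together they give a rational nonnegative partial function on $B^n\cup C^n$. A $B$-block (resp.\ $C$-block) symbol must restrict on $B^n$ (resp.\ $C^n$) to its assigned unused function of $B$ (resp.\ $C$) and is otherwise free; every remaining symbol is interpreted by the constant $0$. Each such partial function is $1$-Lipschitz for the sum metric on $D^n$: for block symbols this is just clause (3) inside $B$ or $C$, and for a shared symbol the only non-immediate case, a pair $\vec b\in B^n$ and $\vec c\in C^n$, follows from $\lvert p(\vec b)-p(\vec c)\rvert\le d(\vec b,\vec c)$, which one obtains by choosing for each coordinate a witness $a_i\in A$ realizing the minimum defining $d(b_i,c_i)$ and applying clause (3) in $B$ and in $C$. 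Finally, a rational nonnegative $1$-Lipschitz function on a subset of $D^n$ extends to a total rational nonnegative $1$-Lipschitz function on $D^n$ by the inf-convolution $p(\vec x):=\min_{\vec y}\bigl(p(\vec y)+d(\vec x,\vec y)\bigr)$ over the tuples $\vec y$ where $p$ was already defined (a minimum of finitely many rationals). Interpreting every symbol this way yields $D\in\Age$ with $f_B,f_C$ embeddings and $f_B\circ e_B=f_C\circ e_C$, which is the required amalgam. I expect the main obstacles to be exactly the Lipschitz estimate across $B^n$ and $C^n$ for the shared symbols and the bookkeeping guaranteeing that $D$ has enough symbols at each arity --- the latter being why we allow $D$ to have strictly more points than the set-theoretic pushout.
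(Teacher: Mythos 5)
Your proof is correct and follows essentially the same route as the paper: the standard metric amalgam over $A$, a block-wise reindexing of the function symbols (shared block plus disjoint $B$- and $C$-blocks), the cross-$B$/$C$ Lipschitz estimate via a witness $a_i\in A$ realizing each minimum, and a McShane-type $1$-Lipschitz extension (your inf-convolution versus the paper's truncated sup-convolution) to the remaining tuples. Your padding of $D$ with extra points to guarantee $n_D\le|D|$ is a sensible refinement, since the paper's choice $n_D=n_B+n_C-n_A$ need not satisfy $n_D\le|B|+|C|-|A|$ when $n_A<|A|$.
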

\begin{proof}
We will prove that $\Age$ is countable, satisfies the hereditary property, joint embedding property and amalgamation property.

For the cardinality, there are only countably many finite rational metric spaces. For each finite rational metric space $A$ of cardinality $n$ there are $n+1$ choices for $n_A$ (recall that $n_A\leq n=|A|$) and for each such a choice only finitely many rational functions $p_m^n$ can be defined, hence the claim follows.

The hereditary property is obvious. To check the joint embedding property, consider two structures $A,B\in \Age$. Let $m_A=\max\{q:(A\vDash p_q(\vec{a}))\wedge p_q\text{ is either }d_q\text{ or }p_{q,m}^n \text{ for some }n\leq n_A,m\leq n_A+1-n,\vec{a}\in A^n\}$; $m_B$ is defined similarly for $B$. Let $m=\max\{m_A,m_B\}$. Let $C=A\coprod B$ be the disjoint union of $A$ and $B$. For $a\in A$ and $b\in B$ we may set $d(a,b)=2m$, so we extend the metric on the whole $C$. To extend other predicates, we set $n_C=\max\{n_A,n_B\}$ and it is easy to see that for every $n\leq n_C$ and $m\leq n_C+1-n$ and every $n$-tuple $(c_1,\ldots,c_n)\in C^n$ on which $p_m^n$ has not been already defined there is always a choice which is consistent. For instance, for any $n\leq n_C$ and $m\leq n_C+1-n$ and $(c_1,\ldots,c_n)\in C^n$ for which $p_m^n$ has not been yet defined we may set $p_m^n(c_1,\ldots,c_n)=0$; this is consistent.

Finally, we need to check the amalgamation property. Let $A,B,C\in \Age$ be structures, we can assume WLOG that $A$ is a substructure of both $B$ and $C$ and for all $n\leq n_A$ and $m\leq n_A-n$ $(p^B)_m^n=(p^C)_m^n$. Let $D=A\coprod (B\setminus A)\coprod (C\setminus A)$. The metric is extended in a standard way, i.e. for $b\in B$ and $c\in C$ we set $d(b,c)=\min\{d(b,a)+d(a,c):a\in A\}$.

Let us set $n_D=n_B+(n_C-n_A)$ (note that $n_A\leq \min\{n_B,n_C\}$). We reenumerate some rational functions on $D$ (see Definition \ref{defiso}).
\begin{itemize}
\item For all $n\leq n_B$, $m\leq n_B+1-n$ and $\vec{b}\in B^n\subseteq D^n$ we let $(p^D)_m^n(\vec{b})=(p^B)_m^n(\vec{b})$, i.e. we keep the enumeration from the original one in $B$.
\item For all $n\leq n_A$, $m\leq n_A+1-n$ and $\vec{c}\in C^n\subseteq D^n$ we again let $(p^D)_m^n(\vec{c})=(p^C)_m^n(\vec{c})$, i.e. keep the previous enumeration.
\item For $n\leq n_A$ and $n_A+1-n<m\leq n_C+1-n$ or for $n_A<n\leq n_C$ and any $m\leq n_C+1-n$ and $\vec{c}\in C^n$ we set $(p^D)_{m+(n_B-n_A)}^n(\vec{c})=(p^C)_m^n(\vec{c})$, i.e. we change the enumeration by adding $n_B-n_A$.\\
\end{itemize}

We need to check that this metric extension along with the reenumeration of some predicates is consistent.

Specifically, we need to check that the following formula still holds true whenever the function $p_m^n$ is defined on both $\vec{b}$ and $\vec{c}$: $$\forall \vec{b},\vec{c}\in D^n (p_m^n(b_1,\ldots,b_n)\leq p_m^n(\vec{c}) +d(b_1,c_1)+\ldots +d(b_n,c_n))$$
Let some $n,m$, $\vec{b}$ and $\vec{c}$ in $D^n$ such that both $p_m^n(\vec{b})$ and $p_m^n(\vec{c})$ are defined be given.
\begin{itemize}
\item  If $n_A<n\leq n_B$ and $m\leq n_B+1-n$ then it follows that both $\vec{b}$ and $\vec{c}$ are from $B^n$ and the formula holds in $D$ since it holds in $B$.
\item For any $n$ if $m>n_B+1-n$ then it follows that both $\vec{b}$ and $\vec{c}$ are from $C^n$ and the formula holds in $D$ since it holds in $B$ with $m'=m-(n_B-n_A)$.
\item Finally, assume that $n\leq n_A$ and $m\leq n_A+1-n$. If $\vec{b}$ and $\vec{c}$ are either both from $B^n$ or both from $C^n$ then the formula holds in $D$ since it holds in $B$, resp. in $C$. So let us assume that $\vec{b}$ is originally from $B^n$ and $\vec{c}$ is originally from $C^n$ (the opposite case is the same of course). From definition, for every $i\leq n$ there is some $a_i\in A$ such that $d(b_i,c_i)=d(b_i,a_i)+d(a_i,c_i)$. We have $$p_m^n(\vec{b})\leq p_m^n(\vec{a})+d(b_1,a_1)+\ldots +d(b_n,a_n)$$ since this formula holds true in $B$. Similarly, we have $$p_m^n(\vec{a})\leq p_m^n\vec{c})+d(a_1,c_1)+\ldots +d(a_n,c_n)$$ since this formula holds true in $C$. Putting together, we obtain $$p_m^n(\vec{b})\leq p_m^n(\vec{c}) +d(b_1,c_1)+\ldots +d(b_n,c_n)$$ which is what we wanted to prove.\\
\end{itemize}
For any other $m,n$ that were not listed above the functions $p_m^n$ were not yet defined. So for $n\leq n_D$ and $m\leq n_D+1-n$ that were not listed above we may set $p_m^n(\vec{d})=0$ for all $\vec{d}\in D^n$ for instance. For any fixed pair $n\leq n_D$ and $m\leq n_D+1-n$ that was listed above but $p_m^n$ was not yet defined on some $\vec{d}\in D^n$ we define it canonically as follows (but there are other possible definitions too): We set $p_m^n(\vec{d})=\max \{0,\max \{p_m^n(\vec{d'})-d(\vec{d'},\vec{d}):p_m^n\text{ was defined on }\vec{d'}\}\}$. Note that even if we used this definition on some $n$-tuple on which $p_m^n$ had been already defined then it would get the same value. That is why we call it canonical. To check it is consistent let $\vec{d_0},\vec{d_1}\in D^n$ be some $n$-tuples. Assume at first that $p_m^n(\vec{d_0})>0$ and $p_m^n(\vec{d_1})>0$ and let $\vec{d'_0},\vec{d'_1}\in D^n$ be such that $p_m^n(\vec{d_0})=p_m^n(\vec{d'_0})-d(\vec{d'_0},\vec{d_0})$ and $p_m^n(\vec{d_1})=p_m^n(\vec{d'_1})-d(\vec{d'_1},\vec{d_1})$. Then $p_m^n(d_0)=p_m^n(\vec{d'_0})-d(\vec{d'_0},\vec{d_0})\leq p_m^n(\vec{d'_0})-d(\vec{d'_0},\vec{d_1})+d(\vec{d_0},\vec{d_1})\leq p_m^n(\vec{d'_1})+d(\vec{d_0},\vec{d_1})$. The case when $p_m^n(\vec{d_0})=0$ or $p_m^n(\vec{d_1})=0$ is similar and the proof is left to the reader.
\end{proof}
Since $\Age$ is a Fra\" iss\' e class it has a Fra\" iss\' e limit which we denote $U$. Besides other things it is a metric space. In fact it is a countable universal homogeneous rational metric space (see Remark \ref{isUrysohn}). By $\Ur$ we denote its metric completion which is the universal Urysohn space. For every natural $n$ we also have the set $\mathcal{F}_n$ of countably many rational functions on $U^n$ without an enumeration arising from the Fra\" iss\' e limit though. We choose some enumeration and denote the set $\mathcal{F}_n$ as $\{f_m^n:m\in \Nat\}$ for every $n$.
For every $m,n\in \Nat$ the set $\tilde{F}_m^n$ of all $n$-tuples $\vec{u}$ from $U$ such that $f_m^n(\vec{u})=0$ is a closed subset of $U^n$. By $F_m^n$ we denote the closure of $\tilde{F}_m^n$ in the completion $\Ur$ (thus we have $F_m^n\cap U=\tilde{F}_m^n$). This finishes the construction of the sets from the statement of Theorem \ref{main}. We must now prove the almost universality and ultrahomogeneity of these sets which we do in the following section.
\subsection{One-point extension property}
When constructing the Fra\" iss\' e limit we had to work only with rational metric spaces and rational functions $p_m^n$ on them in order to have the class $\Age$ countable and to have the limit $U$. The one-point extension property holds for substructures of $U$ (see \ref{OPEFr}). We formulate it here for convenience again. We call it here ``rational one-point extension property".\\

\noindent{\bf Rational one-point extension property.} Let $A\in \Age$ be a finite rational metric space such that the rational functions $f_m^n$, for $n\leq n_A\leq |A|$ and $m\leq n_A+1-n$, are defined on it. Let $B\in \Age$ be a one point extension of $A$, i.e. $|B|-|A|=1$ and there is an embedding $(\iota,(\pi_n^\iota)): A\hookrightarrow B$. Assume that there is an embedding $(\phi,(\pi_n^\phi)): A\hookrightarrow U$. Then there is an embedinng $(\psi,(\pi_n^\psi)): B\hookrightarrow U$ extending $(\phi,(\pi_n^\phi))$, i.e. $(\phi,(\pi_n^\phi))=(\psi,(\pi_n^\psi)) \circ (\iota,(\pi_n^\iota))$.\\

Before we proceed further we use this place for the following remark.
\begin{remark}\label{isUrysohn}
We still owe the explanation that the underlying metric space of our (almost) universal and ultrahomogeneous structure is isometric to the Urysohn universal metric space. To prove it it suffices to check that the underlying metric structure $U$ of the countable Fra\" iss\' e limit is isometric to the universal rational metric space (as its completion is isometric to the Urysohn space). However, realize that a countable rational metric space $X$ is isometric to the universal rational metric space if and only if it has the rational one-point metric extension property: for any finite metric subspace $F\subseteq X$ and any one-point extension $G\supseteq F$ which is still a rational metric space, there is an isometric embedding $\iota :G\hookrightarrow X$ such that $\iota\upharpoonright F=\mathrm{id}$.

However, $U$ has this rational one-point metric extension property. Here, and also in the next section, its rational one-point extension property is always stronger.
\end{remark}
However, since we made the completion $\Ur$ we want to have this kind of one-point extension property for all finite substructures of $\Ur$, not just for those that are actually substructures of $U$.
In this section we prove this full one-point extension property. The almost universality and homogeneity, and uniqueness will follow by a standard argument. We define a generalized class $\bar{\Age}$ of structures (that correspond to finite substructures of $(\Ur,(F_m^n))$).
\begin{defin}
A substructure $A\in \bar{\Age}$ is a finite metric space, moreover there is some $n_A\leq |A|$ and for each $n\leq n_A$ there is a finite set of indices $I^A_n\subseteq \Nat$ such that $|I^A_n|=n_A-n+1$. For each $n\leq n_A$ and $m\in I^A_n$ there is a closed subset $G_m^n\subseteq A^n$. By $p_m^n$ we shall again denote the distance function from the set $G_m^n$.

An embedding of a substructure $A$ into a substructure $B$ is a pair $(\phi,(\pi_n))$ where $\phi:A\hookrightarrow B$ is an isometric embedding and for each $n\leq n_A$ $\pi_n:I^A_n\rightarrow I^B_n$ is an injection such that $\forall n\leq n_A\forall m\in I^A_n\forall \vec{x}\in A^n (p_m^n(\vec{x})=p_{\pi_n(m)}^n(\phi^n(\vec{x})))$.
\end{defin}
Thus we just drop the condition that the metric and functions $p_m^n$ have to have rational values.
\begin{prop}[One-point extension property]\label{main_OPE}
Let $A$ be a finite substructure of $(\Ur,(F_m^n))$ and let $B\in\bar{\Age}$ be such that $|B|=|A|+1$ and there is an embedding $(\phi,(\pi_n^\phi))$ of $A$ into $B$. Then there exists an embedding $(\psi,(\pi_n^\psi))$ of $B$ into $(\Ur,(F_m^n))$ such that $\mathrm{id}=(\psi,(\pi_n^\psi))\circ (\phi,(\pi_n^\phi))$.
\end{prop}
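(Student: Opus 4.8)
The plan is to derive this from the rational one-point extension property by a completion argument, after one simplification. The key observation is that the data $p_m^n$ attached to a finite substructure $A$ of $(\Ur,(F_m^n))$ are the distance functions $d(\cdot,G_m^n)$ computed inside $A^n$, with $G_m^n=A^n\cap F_m^n$, and any isometry automatically preserves such distance functions; hence an embedding $(\psi,(\pi_n^\psi))$ of $B$ into $(\Ur,(F_m^n))$ extending $(\phi,(\pi_n^\phi))$ amounts to an isometric copy $\psi$ of $B$ in $\Ur$ with $\psi\circ\phi=\mathrm{id}_A$ (say $\psi(b)=b^\ast$ for the single new point $b$ of $B$) together with injections $\pi_n^\psi\colon I_n^B\to\Nat$ — equal to the original index whenever the relation is inherited from $A$, fresh otherwise — such that $\psi^n\vec x\in F_{\pi_n^\psi(m)}^n\Leftrightarrow\vec x\in G_m^n$ for every $\vec x\in B^n$. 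I would also note that each $f_m^n$ is $1$-Lipschitz on $U^n$ (Definition~\ref{first_age}(3)), with unique $1$-Lipschitz extension $\bar f_m^n=d(\cdot,F_m^n)$ to $\Ur^n$, so that membership of a point of $\Ur^n$ in $F_m^n$ — being the same as being a limit of zeros of $f_m^n$ from $U^n$ — is a closed condition, stable under the limits below, while a point lies \emph{outside} $F_m^n$ exactly when $\bar f_m^n$ is bounded away from $0$ near it.

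For the metric together with the finitely many relations $F_{m'}^n$ already recorded in $A$, the construction is the classical one showing that the completion of a Fra\"iss\'e limit inherits its one-point extension property. Fix a summable $\varepsilon_k\downarrow 0$; by density of $U$ in $\Ur$ pick $a^{(k)}\in U$ with $d(a^{(k)},a)<\varepsilon_k$, and recursively apply the rational one-point extension property to extend the finite rational structure carried inside $U$ by $\{a^{(k)}:a\in A\}\cup\{b_{k-1}\}$ by a new point $b_k$, whose distances and whose values for the recorded relations on $b_k$-tuples are obtained from $B$ by rounding — distances up, positive relation-values down, zero relation-values kept at $0$ — by amounts that absorb the $O(\varepsilon_{k-1}+\varepsilon_k)$ error; this stays in $\Age$ precisely because $B$ is finite, so the relevant positive quantities are bounded away from $0$ once $\varepsilon_k$ is small. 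Feeding $b_{k-1}$ into step $k$ makes $(b_k)$ Cauchy; its limit $b^\ast$ gives the desired $\psi$, because a $b$-tuple prescribed to lie in $G_m^n$ is a zero of $f_{m'}^n$ at every stage, so its image lies in $F_{m'}^n$, one prescribed to lie outside keeps $f_{m'}^n$ bounded below, so its image lies outside, and the $A$-only tuples are correct automatically since $A^n\cap F_{m'}^n=G_m^n\cap A^n$.

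It remains to supply, for each relation of $B$ not inherited from $A$, a fresh index $j$ with $F_j^n$ meeting the fixed finite set $(A\cup\{b^\ast\})^n$ in exactly the prescribed pattern; this is the step I expect to be the real obstacle, as it is the only genuinely new ingredient. I would obtain it by applying the universality of $U$ — valid for all countable structures whose finite substructures lie in $\Age$, since $\Age$ has the hereditary and joint embedding properties (Lemma~\ref{isfraisse}) — to the countable structure built on Cauchy sequences converging to the points of $A$ and to $b^\ast$ and carrying new $n$-ary relations whose zero-sets are exactly the approximating tuples prescribed to be ``in''; passing to Cauchy limits of the images of this structure in $U$ then yields closed sets $F_j^n$ with the required finite trace, and a further use of the rational one-point extension property is needed to keep this construction anchored over $A$ (and then over $A\cup\{b^\ast\}$), so that the trace is taken over the given configuration rather than over an isometric copy of it. Apart from this point and the rounding bookkeeping of the second paragraph, everything reduces to the standard completion and back-and-forth machinery together with the rational one-point extension property already at hand.
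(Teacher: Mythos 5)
Your opening reduction misreads what an embedding into $(\Ur,(F_m^n))$ has to preserve. For a finite substructure $A$ of $(\Ur,(F_m^n))$ the functions $p_m^n$ are the restrictions of the global distance functions $d_{\Ur^n}(\cdot,F_m^n)$, not distances computed inside $A^n$ to $A^n\cap F_m^n$; this is exactly how the paper uses them (the approximating structures $S_l$ in its proof carry $p_m^n(\vec{x})=d_\Ur(\vec{x},F_m^n)$, and the almost-universality argument needs $d(\vec{d},G_m^n)=d_\Ur(\vec{d'},F_{\pi_n(m)}^n)$ for tuples \emph{not} in the relation). So the conclusion of the Proposition is that $\psi$ matches these real-valued distances exactly, and your closing verification --- ``prescribed to lie in'' gives a limit of zeros, ``prescribed to lie outside'' keeps $f_{m'}^n$ bounded below --- only establishes membership, which is strictly weaker and would not support the intended applications. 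Your rounding scheme could in principle deliver the exact values (the errors tend to $0$), but making it consistent is precisely the content of the paper's Lemma \ref{metriclem} (the perturbed distances $d(b_i,b_k)+\gamma_i$ must be rounded up by amounts separated from one another, indexed by $i$, so that the triangle inequalities survive the approximation error) and of Claim \ref{correct_estim_main} (the clamping of the rounded relation-values into the admissible interval $[m_j^l,M_j^l]$ moves them by at most $n/2^{k+l+2}$); neither is routine bookkeeping, and ``the relevant positive quantities are bounded away from $0$'' is not the point at issue.

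The more serious gap is the fresh-index step, which you correctly flag as the crux but do not solve. Universality of $U$ applied to your countable structure of Cauchy sequences only yields an isometric copy of the desired configuration \emph{somewhere} in $U$; to be of any use the new relations must have the prescribed trace (and prescribed distances) on the \emph{given} points of $A$ and on the $b^\ast$ you have just built, and no extension property of $U$ lets you attach a new relation to an already-fixed finite set of points --- the rational one-point extension property only adds points. The paper's resolution is that the new relation symbols are introduced \emph{together with} the new point at the first step of the inductive approximation: the structure $V_1$ has $n_{V_1}=n_{S_1}+1=n_B$, so the new functions $p_{n_B-n+1}^n$ are defined (with rounded target values) on \emph{all} tuples of the approximating configuration, including tuples of old points only, and the structural (not merely metric) rational one-point extension property then assigns them concrete indices $\varpi_n$ in $U$; the later steps of the induction retain those indices and the quantitative claim forces the limit to realize the prescribed values. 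Without this coupling of ``new point'' and ``new relation'' your third paragraph has no mechanism, so the proposal does not close.
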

Before we provide a proof we show that the almost universality and homogeneity and also the uniqueness follow from Proposition \ref{main_OPE}.

\begin{claim}[Almost universality]
$(\Ur,(F_m^n))$ is almost universal.
\end{claim}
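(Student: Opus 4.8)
The plan is to derive almost universality from the one-point extension property (Proposition~\ref{main_OPE}) by a standard back-and-forth/chain-of-embeddings argument, exactly as one deduces universality of a Fra\"iss\'e limit from the amalgamation property. First I would set up the data: let $(X,d,(G_m^n)_{n,m\in\Nat})$ be an arbitrary Polish metric space equipped with closed sets $G_m^n\subseteq X^n$. Fix a countable dense subset $\{x_k:k\in\Nat\}$ of $X$ and, for each $k$, let $X_k=\{x_0,\ldots,x_{k-1}\}$ with the induced metric. To put $X_k$ into the framework of $\bar\Age$, I need to equip it with finitely many closed sets indexed in the required way: set $n_{X_k}$ to grow slowly (say $n_{X_k}=\lfloor\log k\rfloor$ or just keep it bounded on an initial segment and let it tend to infinity), choose for each $n\le n_{X_k}$ an index set $I_n^{X_k}\subseteq\Nat$ of the prescribed size $n_{X_k}-n+1$ bijecting with an initial segment of the $G^n_\cdot$, and let the closed set attached to $m\in I_n^{X_k}$ be the restriction $G^n_{m'}\upharpoonright X_k^n$ for the corresponding $m'$. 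One must be slightly careful that the ``sum-metric distance function'' condition (the $1$-Lipschitz inequality from Definition~\ref{first_age}) is automatically satisfied, since the distance-to-a-closed-set function is always $1$-Lipschitz in the sum metric; this is where the structure of $\bar\Age$ is tailored to make the restrictions legal members.

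Next I would build, by induction on $k$, an increasing chain of embeddings. Start with any embedding $\psi_1:X_1\hookrightarrow(\Ur,(F_m^n))$ of the one-point structure $X_1$ (exists because any single point with its finitely many attached closed sets embeds — this is a degenerate case of the one-point extension property applied to the empty structure, or can be checked directly). Given an embedding $(\psi_k,(\pi_n^{(k)}))$ of $X_k$ into $(\Ur,(F_m^n))$, view $X_{k+1}$ as a one-point extension of (the image of) $X_k$ in $\bar\Age$, and apply Proposition~\ref{main_OPE} to obtain $(\psi_{k+1},(\pi_n^{(k+1)}))$ extending it. The coherence of the index maps $\pi_n^{(k)}$ across $k$ gives, in the limit, injections $\pi_n:\Nat\to\Nat$; the union $\bigcup_k\psi_k$ is an isometry from the dense set $\{x_k\}$ into $\Ur$, which extends uniquely to an isometric embedding $\psi:X\hookrightarrow\Ur$ by completeness of $\Ur$. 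It remains to check $\psi^n(G_m^n)=\psi(X)^n\cap F_{\pi_n(m)}^n$, equivalently $\vec x\in G_m^n\Leftrightarrow\psi^n(\vec x)\in F_{\pi_n(m)}^n$ for all $\vec x\in X^n$. For $\vec x$ with all coordinates in the dense set this is immediate from the chain construction (each $\psi_k$ preserves the relevant distance functions $p_m^n$, hence membership in the zero sets); for general $\vec x$ one passes to the limit using that $G_m^n$ and $F_{\pi_n(m)}^n$ are closed and $\psi$ is an isometry, so both sides are closed conditions preserved under limits of dense tuples.

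The main obstacle I anticipate is bookkeeping rather than conceptual: one must choose the growth of $n_{X_k}$ and the index sets $I_n^{X_k}$ so that (a) \emph{every} pair $(n,m)$ with $G_m^n$ nonempty eventually gets covered — so that the limiting $\pi_n$ is defined on all of $\Nat$ and captures every $G_m^n$ — while (b) at each finite stage the structure genuinely lies in $\bar\Age$ (the cardinality bound $n_{X_k}\le|X_k|$ and the exact size $|I_n^{X_k}|=n_{X_k}-n+1$ must hold), and (c) the one-point extension at stage $k$ only ever \emph{adds} new indices or keeps old ones, never revising a previously committed value of $\pi_n$, so the chain is genuinely increasing. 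A clean way to handle (a) is a standard dovetailing: enumerate all pairs $(n,m)$ and ensure the $k$-th pair is included by stage $f(k)$ for some fast-growing $f$, while simultaneously letting $n_{X_k}\to\infty$. Once this scheduling is fixed, everything else is the routine Fra\"iss\'e-style verification, and the passage from the rational/countable limit $U$ to its completion $\Ur$ causes no trouble here because Proposition~\ref{main_OPE} already delivers the full (not merely rational) one-point extension property for finite substructures of $\Ur$.
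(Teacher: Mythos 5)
Your overall strategy is the same as the paper's: enumerate a countable dense subset, build an increasing chain of embeddings by iterating Proposition~\ref{main_OPE} (with the obvious dovetailing $n_{X_k}=k$ and $I^{X_k}_n=\{1,\dots,k-n+1\}$ --- your worry about scheduling is resolved exactly this way, no slow growth needed), and then extend to the completion. The one place where your argument as written would fail is the final limiting step. You justify $\vec{x}\in G_m^n\Leftrightarrow\psi^n(\vec{x})\in F^n_{\pi_n(m)}$ for general $\vec{x}$ by saying that ``both sides are closed conditions preserved under limits of dense tuples.'' That is not a valid justification: non-membership in a closed set is an \emph{open} condition, so the implication $\vec{x}\notin G_m^n\Rightarrow\psi^n(\vec{x})\notin F^n_{\pi_n(m)}$ cannot be obtained by a soft closedness argument; and even the forward implication does not follow from approximating $\vec{x}$ by arbitrary dense tuples, since those tuples need not lie in $G_m^n$ and hence their images need not lie in $F^n_{\pi_n(m)}$.

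Both directions are recoverable, but only by quantitative information you already have in hand and did not invoke at this step. The embeddings of $\bar{\Age}$-structures preserve the distance functions $p_m^n$, i.e.\ $d(\vec{d},G_m^n)=d_\Ur(\psi^n(\vec{d}),F^n_{\pi_n(m)})$ for dense tuples $\vec{d}$, not merely membership in the zero sets. With this: if $\vec{x}\notin G_m^n$ and $\varepsilon=d(\vec{x},G_m^n)$, pick a dense tuple $\vec{d}$ within $\varepsilon/3$ of $\vec{x}$; then $d(\vec{d},G_m^n)>2\varepsilon/3$, hence $d_\Ur(\psi^n(\vec{d}),F^n_{\pi_n(m)})>2\varepsilon/3$ and so $\psi^n(\vec{x})\notin F^n_{\pi_n(m)}$. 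For the forward direction the paper instead refines the choice of the dense set $D$ so that $D^n\cap G_m^n$ is dense in $G_m^n$ for all $m,n$ (such a countable $D$ exists by separability of each $G_m^n$), so that any $\vec{x}\in G_m^n$ is a limit of dense tuples actually lying in $G_m^n$, whose images lie in the closed set $F^n_{\pi_n(m)}$; alternatively, continuity of the distance-to-$F^n_{\pi_n(m)}$ function together with the preserved $p_m^n$ also gives this direction. You should add one of these fixes explicitly; everything else in your outline matches the paper's proof.
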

\begin{proof}[Proof of the Claim]
Let $(X,d)$ be a Polish metric space (in fact, it can be just separable metric) equipped with sets $(G_m^n)_{m,n\in \Nat}$ where for each $n$ and $m$ $G_m^n\subseteq X^n$ is a closed subset of the $n$-th power of $X$. Let $D\subseteq X$ be a countable subset with the following properties:
\begin{itemize}
\item $D$ is a dense subset of $X$
\item For every $m$ and $n$ $D^n\cap G_m^n$ is a dense subset of $G_m^n$.
\end{itemize}
We prove that there exist an isometric copy $D'$ of $D$ in $\Ur$ and injections $\pi_i$ from $\Nat$ to $\Nat$ for all $i$ such that for every $m$ and $n$ and $\vec{d}\in D^n$ we have $\vec{d}\in G_m^n\Leftrightarrow \vec{d'}\in F_{\pi_n(m)}^n$ and if $\vec{d}\notin G_m^n$ then $d(\vec{d},G_m^n\}=d_\Ur(\vec{d'},F_{\pi_n(m)}^n)$, where $\vec{d'}$ corresponds to $\vec{d}$ in the copy. Then we will extend the isometry to the closure of $D$ which is the whole space $X$ and we will be done. To see that, let $m,n\in \Nat$ and $\vec{x}\in X^n$ be arbitrary.

If $\vec{x}\in G_m^n$ then there is a sequence $(d_1^j,\ldots,d_n^j)_j\subseteq D^n$ converging to $\vec{x}$ such that $\vec{d^j}\in G_m^n$ for every $j$. From our assumption, $\vec{{d'}^j}\in F_{\pi_n(m)}^n$ and since $F_{\pi_n(m)}^n$ is closed the image of $\vec{x}$ also lies in $F_{\pi_n(m)}^n$.

If $\vec{x}\notin G_m^n$ and $\varepsilon=d(\vec{x},G_m^n)$ then there is $\vec{d}\in D^n$ such that $d(\vec{x},\vec{d})<\varepsilon/3$. It follows that $d(\vec{d},G_m^n)>2\varepsilon/3$, thus $d_\Ur(\vec{d'},F_{\pi_n(m)}^n)>2\varepsilon/3$ and thus the image of $\vec{x}$ also does not lie in $F_{\pi_n(m)}^n$.\\

Let us enumerate the set $D$ as $\{d_1,d_2,\ldots\}$. The construction of $D'$ is by induction, just a series of applications of Proposition \ref{main_OPE}. Let $B_1$ be a one-point structure containing $d_1$, $n_{B_1}=1$ and $I^{B_1}_1=\{1\}$. Let $A_1$ be an empty structure and use Proposition \ref{main_OPE} to get an embedding of $B_1$ into $\Ur$. The embedding determines a point $u_1\in \Ur$ and also an injection $\pi_1:I^{B_1}_1\rightarrow \Nat$. We have $d(d_1,G_1^1)=p_{\pi_1(1)}^1(u_1)=d_\Ur(u_1,F_{\pi_1(1)}^1)$.

Assume we have found $u_1,\ldots,u_{k-1}$. Consider a structure $B_k$ containing $\{d_1,\ldots,d_k\}$, $n_{B_k}=k$, for $i\leq k$ $I^{B_k}_i=\{1,\ldots,k-i+1\}$. Let $A_k$ be a substructure of $(\Ur,(F_m^n))$ containing $\{u_1,\ldots,u_{k-1}\}$, $n_{A_k}=k-1$ and for $i\leq k-1$ $I^{A_k}_i=\{\pi_i(1),\ldots,\pi_i(k-i)\}$. There is an obvious embedding of $A_k$ into $B_k$ so we can use Proposition \ref{main_OPE} to extend $A_k$ by some new point $u_k$. We also extend the domain of $\pi_i$, for $i\leq k-1$, by $k-i+1$ and obtain a new injection $\pi_k$ with domain $\{1\}$. This finishes the induction.
\end{proof}
\begin{claim}[Almost ultrahomogeneity]
$(\Ur,(F_m^n))$ is almost ultrahomogeneous.
\end{claim}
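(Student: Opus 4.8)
Here is how I would attack it. The plan is to run a standard back-and-forth over $\Ur$ with Proposition~\ref{main_OPE} as the one-step engine, in the same spirit as the proof of almost universality just given, but with one extra layer of bookkeeping so that the index injections grow into bijections of $\Nat$. Concretely I want to produce an increasing chain of isometries $\psi = \psi_0 \subseteq \psi_1 \subseteq \cdots$, with each $\psi_k$ an isomorphism between finite substructures $A_k, B_k$ of $(\Ur,(F^n_m))$, together with, for every $n$, an increasing chain $\pi_n = \pi^{(0)}_n \subseteq \pi^{(1)}_n \subseteq \cdots$ of bijections $\pi^{(k)}_n \colon I^{A_k}_n \to I^{B_k}_n$ witnessing that $\psi_k$ is an isomorphism. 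Before starting I fix a countable $D \subseteq \Ur$ that is dense, contains $M_1 \cup M_2$, and satisfies that $D^n \cap F^n_m$ is dense in $F^n_m$ for all $m,n$ (adjoin to any countable dense subset of $\Ur$ all coordinates of a countable dense subset of each separable set $F^n_m$), and I enumerate $D = \{e_k : k \in \Nat\}$. The base case is $A_0 = M_1$, $B_0 = M_2$, $\psi_0 = \psi$, $\pi^{(0)}_n = \pi_n$: the hypothesis of the Remark, together with $\psi$ being an isometry, says exactly that $\psi$ carries each restricted relation $F^n_m \cap M_1^n$ onto $F^n_{\pi_n(m)} \cap M_2^n$, and since the functions $p^n_m$ are just the internal distances to these relations, an isometry respecting the relations automatically respects the $p^n_m$; thus $\psi_0$ is a $\bar{\Age}$-isomorphism, a legitimate starting point.

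At stage $k+1$ I perform a ``forth'' half-step followed by a ``back'' half-step, each one application of Proposition~\ref{main_OPE}. In the forth half-step, if $e_k$ is not already in the domain, I view $A_k \cup \{e_k\}$ as a substructure of $(\Ur,(F^n_m))$ whose parameter $n_A$ equals $n_{A_k}+1$, enlarging each index set $I^{A_k}_n$ by the \emph{least} natural number not yet used on the domain side and introducing a fresh singleton index set $I_{n_{A_k}+1}$ in the new top power; I transport this one-point extension across $\psi_k$ to an abstract one-point extension $B'$ of $B_k$ in $\bar{\Age}$, with the range-side index sets enlarged by arbitrary fresh naturals so as to extend the $\pi^{(k)}_n$ (note that, because $\psi_k$ is already a $\bar{\Age}$-isomorphism, $B'$ restricts to $B_k$ correctly, so the inclusion $B_k \hookrightarrow B'$ is a genuine $\bar{\Age}$-embedding); and I invoke Proposition~\ref{main_OPE} to realize $B'$ inside $(\Ur,(F^n_m))$ while fixing $B_k$ pointwise, reading off $\psi_{k+1/2} \supseteq \psi_k$ and the enlarged index maps. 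The back half-step is the mirror image, forcing $e_k$ into the range and enlarging each range-side index set by its least unused natural, the matching point on the domain side being supplied by Proposition~\ref{main_OPE}. I keep $n_{A_k} = n_{B_k}$ throughout, and since $D$ is infinite, infinitely many forth (resp.\ back) half-steps are non-trivial, so these parameters tend to $\infty$ and both $\bigcup_k A_k$ and $\bigcup_k B_k$ contain $D$ and are therefore dense in $\Ur$.

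I expect the real content of the argument --- and the place most likely to conceal a mistake --- to be precisely this index bookkeeping. I need each $\bar\pi_n$ to come out surjective, yet the constraint $|I^A_n| = n_A - n + 1$ ties all the index sets of a given structure together, and Proposition~\ref{main_OPE} lets me freely prescribe the new indices only on the side whose point is being added (on the other side they are whatever the conclusion of the proposition produces, possibly relabelled). The remedy is exactly the alternation built into the recursion: surjectivity of $\bigcup_k I^{A_k}_n$ is forced by the ``least unused'' choices made in the forth half-steps, surjectivity of $\bigcup_k I^{B_k}_n$ by those made in the back half-steps, and since for each fixed $n$ both kinds of half-step act on $I_n$ infinitely often once $n_{A_k} \geq n$, one gets $\bigcup_k I^{A_k}_n = \Nat = \bigcup_k I^{B_k}_n$. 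Consequently $\bar\psi := \bigcup_k \psi_k$ is an isometry between two dense subsets of $\Ur$, hence extends uniquely to an isometry $\bar\psi \colon \Ur \to \Ur$ extending $\psi$, and each $\bar\pi_n := \bigcup_k \pi^{(k)}_n$ is a bijection of $\Nat$ extending $\pi_n$.

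It remains to pass from the dense set to all of $\Ur$, exactly as in the proof of the previous Claim: for $\vec u \in (\bigcup_k A_k)^n \cap F^n_m$ one has, for all large $k$, $\vec u \in A_k^n$ and $m \in I^{A_k}_n$, so $\bar\psi^n(\vec u) = \psi^n_k(\vec u) \in F^n_{\bar\pi_n(m)}$; since $D^n \cap F^n_m$ is dense in $F^n_m$, $D \subseteq \bigcup_k A_k$, $\bar\psi$ is continuous and $F^n_{\bar\pi_n(m)}$ is closed, this yields $\bar\psi^n(F^n_m) \subseteq F^n_{\bar\pi_n(m)}$, and the reverse inclusion follows by applying the same reasoning to $\bar\psi^{-1}$ and $\bar\pi_n^{-1}$, the construction being symmetric. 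The remaining verifications at each step --- that the structures produced lie in $\bar{\Age}$ and that the maps involved are embeddings in the sense required by Proposition~\ref{main_OPE} --- are of the same routine nature as those in Lemma~\ref{isfraisse}. This gives the desired almost ultrahomogeneity.
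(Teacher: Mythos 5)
Your overall strategy is exactly the paper's: the paper's own proof is a short sketch that runs a back-and-forth chain of applications of Proposition~\ref{main_OPE} over a countable dense set $D$ with $D^n\cap F^n_m$ dense in each $F^n_m$, arranging that every point of $D$ eventually enters both the domain and the range and that every index eventually enters both the domain and the range of the partial permutations, and then takes the union and closes off. Your forth/back half-steps, the ``least unused index'' device to force the $\bar\pi_n$ to be bijections, and the final density-plus-closedness argument (with the distance functions guaranteeing that tuples \emph{off} a relation stay off its image) are a correct and in fact more detailed elaboration of that sketch; the index bookkeeping you single out as the delicate point is precisely what the paper compresses into one sentence.

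The one step I would push back on is your base case. You assert that ``an isometry respecting the relations automatically respects the $p^n_m$.'' If $p^n_m$ is read, as Proposition~\ref{main_OPE} and its proof require (see the structures $S_l$ there, and the use of $d_\Ur(\vec x,F^n_m)$ in the almost-universality argument), as the distance \emph{in $\Ur$} to the global closed set $F^n_m$, this implication is false: two finite configurations can have identical relation memberships (e.g.\ all tuples outside the relations) yet lie at different distances from $F^n_m$, and then no extension $\bar\psi$ with $\bar\pi_n$ extending the given $\pi_n$ can exist, since a relation-preserving isometry of $\Ur$ must preserve these distances. So from the membership-only hypothesis you quote, the very first application of Proposition~\ref{main_OPE} is not justified. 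The paper avoids this by starting from ``two isomorphic substructures'' in the sense of $\bar{\Age}$, i.e.\ with the distance functions $p^n_m$ preserved by $(\psi,(\pi_n^\psi))$, which is the hypothesis your argument actually needs; with that (tacitly stronger) reading of the hypothesis your proof goes through and coincides with the paper's.
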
 
\begin{proof}[Sketch of the proof]
Let $A$ and $B$ be two isomorphic substructures (witnessed by $(\phi,(\pi_n^\phi))$) of $(\Ur,(F_m^n))$. WLOG assume that for every $n\leq n_A=n_B$ we have $I_n^A=I_n^B=\{1,\ldots,n_A-n+1\}$ and $\pi_n^\phi$ is the identity on $I_n^A$. Let $D=\{u_n:n\in \Nat\}\subseteq U$ be a countable dense subset such that for every $m,n$ $D^n\cap F_m^n$ is dense in $F_m^n$. By a back-and-forth series of use of the one-point extension property (Proposition \ref{main_OPE}) we shall be extending the isomorphism $(\phi,(\pi_n^\phi))$ into a chain $(\phi,(\pi_n^\phi))\subseteq (\phi_1,(\pi_{n,1}^{\phi_1}))\subseteq (\phi_2,(\pi_{n,2}^{\phi_2}))\subseteq \ldots$ so that for every $m\in \Nat$ $u_m$ is both in the domain and range of $\phi_m$ and $m$ is in the domain and range of $\pi_{m,1}$. $\bigcup _m (\phi_m,(\pi_{n,m}^{\phi_m}))$ is the desired isomorphism of $(\Ur,(F_m^n))$.
\end{proof}
\begin{claim}[Uniqueness]
$(\Ur,(F_m^n))$ is unique with the almost universality and ultrahomogeneity property.
\end{claim}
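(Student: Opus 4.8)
The plan is a back-and-forth between $(\Ur,(F_m^n))$ and an arbitrary Polish metric structure $(X,(G_m^n))$ of the same signature that is also almost universal and almost ultrahomogeneous, producing an isometry $h\colon\Ur\to X$ and bijections $\sigma_n\colon\Nat\to\Nat$ ($n\in\Nat$) such that $\vec{x}\in F_m^n\Leftrightarrow h^n(\vec{x})\in G^n_{\sigma_n(m)}$ for all $m,n\in\Nat$ and all $\vec{x}\in\Ur^n$.

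First I would note that $(X,(G_m^n))$ satisfies the same one-point extension property as in Proposition \ref{main_OPE}: given a finite substructure $B\subseteq X$ and a one-point extension $B'\in\bar{\Age}$ of $B$, almost universality embeds the finite structure $B'$ into $X$, and then almost ultrahomogeneity of $X$, applied to the two isomorphic copies of $B$ inside $X$ (the original one and the one sitting inside that embedded copy), produces an automorphism of $(X,(G_m^n))$ that carries the embedded $B'$ to a one-point extension of $B$ inside $X$ over $\mathrm{id}_B$. For $(\Ur,(F_m^n))$ this property is exactly Proposition \ref{main_OPE}.

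Next I would fix countable dense sets $D=\{a_k:k\in\Nat\}\subseteq\Ur$ and $E=\{b_k:k\in\Nat\}\subseteq X$ with $D^n\cap F^n_m$ dense in $F^n_m$ and $E^n\cap G^n_m$ dense in $G^n_m$ for all $m,n$, and construct increasing chains of finite substructures $A_0\subseteq A_1\subseteq\cdots$ of $(\Ur,(F_m^n))$ and $B_0\subseteq B_1\subseteq\cdots$ of $(X,(G_m^n))$ together with a coherent chain of isomorphisms $(\phi_k,(\pi_{n,k}))\colon A_k\to B_k$, arranging that $a_k\in A_k$, $b_k\in B_k$, $n_{A_k}\to\infty$, $\bigcup_k I^{A_k}_n=\Nat$ and $\bigcup_k I^{B_k}_n=\Nat$ for every $n$. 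At a \emph{forth} step I would adjoin $a_k$ to $A_{k-1}$ with the structure inherited from $(\Ur,(F_m^n))$, simultaneously raising $n_A$ by one --- which opens up exactly enough room to insert into each relevant index set $I^A_n$ a fresh natural number, chosen so as to exhaust $\Nat$ in the limit --- then transport this structure along the current isomorphism to a one-point extension $B'\in\bar{\Age}$ of $B_{k-1}$, and realize $B'$ inside $X$ over $B_{k-1}$ via the one-point extension property of $(X,(G_m^n))$; this yields $A_k$, $B_k$ and the extended isomorphism. A \emph{back} step is symmetric, starting from $b_k$, enlarging the $X$-side index sets by fresh naturals exhausting $\Nat$, and realizing the resulting one-point extension inside $\Ur$ by Proposition \ref{main_OPE}.

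Finally $h_0:=\bigcup_k\phi_k$ is an isometry of $D$ onto $E$, extending uniquely to an isometry $h\colon\Ur\to X$ of the completions, and each $\sigma_n:=\bigcup_k\pi_{n,k}$ is a bijection of $\Nat$; that $h$ reduces $F^n_m$ to $G^n_{\sigma_n(m)}$ on all of $\Ur^n$ then follows from the density conditions and the closedness of the relations, exactly as in the proof of the almost universality claim above. The step I expect to be the main obstacle is the bookkeeping in the construction: one must interleave the enlargements of the index sets with the one-point extensions so that every partial permutation $\pi_{n,k}$ stays injective at every finite stage while still growing to a bijection of $\Nat$, and one must check that the finite substructures of $X$ produced along the way genuinely lie in $\bar{\Age}$ with index sets of the prescribed cardinalities $n_A-n+1$. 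These points are routine but fiddly; the underlying metric and relation computations are the same as those in Lemma \ref{isfraisse}.
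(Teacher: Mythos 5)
Your argument is exactly what the paper intends: the paper's entire proof of this claim is the single sentence ``This is again done by a standard back-and-forth argument using Proposition \ref{main_OPE},'' and your proposal is a faithful (and considerably more detailed) unwinding of that back-and-forth, including the standard derivation of the one-point extension property for the second structure from its almost universality and almost ultrahomogeneity. The bookkeeping issues you flag (growing the index sets to exhaust $\Nat$, keeping the partial permutations injective) are real but routine, and the paper does not address them either.
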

This is again done by a standard back-and-forth argument using Proposition \ref{main_OPE}.

Before we prove Proposition \ref{main_OPE} we need the following lemma that will be useful in the next section too.
\begin{lem}\label{metriclem}
Let $M=\{d_1,\ldots,d_k\}$ be a given finite metric space. Also, for every $i< k$ let $(u_i^j)_j\subseteq U$ be a given rational Cauchy sequence from the rational Urysohn space such that $d(u_i^j,u_i^{j+1})\leq 1/2^{j+1}$ for all $j$ and moreover, $d_\Ur(\lim_n u_i^n,\lim_n u_j^n)=d_M(d_i,d_j)$ for every $i,j< k$.

Moreover, let $l\in \Nat$ be given and let $\{u_k^1,\ldots,u_k^{l-1}\}\subseteq U$ (if $l=1$ then it is an empty sequence) be a given finite rational sequence with the following property: for every $j<l$ and every $i< k$ we have $d_M(d_k,d_i)+1/(k\cdot 2^{j+1})\leq d(u_k^j,u_i^{j+k+2})\leq d_M(d_k,d_i)+1/2^j$. 

Then if we consider the space $A_k=\{u_1^{l+k+2},\ldots,u_{k-1}^{l+k+2},u_k^{l-1}\}$ (resp. $A_k=\{u_1^{l+k+2},\ldots,u_{k-1}^{l+k+2}\}$ if $l=1$) then there exists a rational metric extension $U\supseteq M_k=A_k\cup\{g_k\}$ such that $d_M(d_k,d_i)+(2i-1)/(k\cdot 2^{l+1})\leq d(g_k,u_i^{l+k+2})\leq d_M(d_k,d_i)+(2i)/(k\cdot 2^{l+1}$ for all $i<k$ and if $l>1$ then also $d(g_k,u_k^{l-1})=1/2^l$.
\end{lem}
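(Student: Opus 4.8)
The plan is to construct by hand a rational finite metric space $M_k=A_k\cup\{g_k\}$ that extends $A_k$ — which we regard as a subspace of $U$ — by a single new point with the prescribed distances, verify that it is a metric space, and then invoke the rational one-point metric extension property of $U$ (Remark~\ref{isUrysohn}) to realise a copy of $M_k$ inside $U$ fixing $A_k$ pointwise; the image of $g_k$ is then the required point.

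First I would fix the distances from $g_k$. When $l>1$, set $d(g_k,u_k^{l-1}):=1/2^l$ (this is already rational, and is the only admissible value). For each $i<k$ the prescribed window $[\,d_M(d_k,d_i)+(2i-1)/(k\cdot 2^{l+1}),\ d_M(d_k,d_i)+(2i)/(k\cdot 2^{l+1})\,]$ has positive length $1/(k\cdot 2^{l+1})$, hence contains a rational; I would pick one, call it $r_i$, and set $d(g_k,u_i^{l+k+2}):=r_i$. The essential structural point is that, as $i$ runs over $1,\dots,k-1$, these windows are pairwise disjoint, increasing in $i$, and separated by gaps of length at least $1/(k\cdot 2^{l+1})$; this is what leaves just enough room for the triangle inequalities, and it is where the choice of the $r_i$ inside their windows must be made with care.

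Next I would check that this defines a metric on $M_k$. Symmetry and positivity are immediate, and every triangle lying inside $A_k$ holds automatically since $A_k$ is a subspace of $U$; so only the triangles through $g_k$ need checking, and these come in two families: $\{g_k,u_i^{l+k+2},u_{i'}^{l+k+2}\}$ and (when $l>1$) $\{g_k,u_i^{l+k+2},u_k^{l-1}\}$. For each of the three inequalities of each such triangle I would combine: (i) the triangle inequality in $M$ among $d_k,d_i,d_{i'}$; (ii) the fast-Cauchy condition $d(u_i^a,u_i^b)<1/2^{\min\{a,b\}}$, which gives $d(u_i^j,\lim_n u_i^n)\le 1/2^j$ and hence $|d(u_i^{l+k+2},u_{i'}^{l+k+2})-d_M(d_i,d_{i'})|\le 1/2^{l+k+1}$, and, together with the stated hypotheses on $d(u_k^{j},u_i^{j+k+2})$, a corresponding sandwiching of $d(u_i^{l+k+2},u_k^{l-1})$ about $d_M(d_k,d_i)$; and (iii) the elementary inequality $2^{k}>k$, equivalently $1/2^{l+k+1}<1/(k\cdot 2^{l+1})$. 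The role of (iii), and of the built-in ``$(k+2)$-generation lag'' between $g_k$ and the points $u_i^{l+k+2}$ it is measured against, is precisely that every approximation error appearing in the computation is of order $1/2^{l+k+1}$ and is therefore absorbed by the window gaps of size $1/(k\cdot 2^{l+1})$.

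I expect this triangle-inequality bookkeeping to be the main obstacle; within it the genuinely tight cases are those in which a triangle inequality in $M$ collapses to an equality (one of $d_k,d_i,d_{i'}$ metrically between the other two), for then there is essentially no slack beyond the window gaps and one must place each $r_i$ at the correct end of its window and exploit the full strength of the hypotheses on the $u_k^{j}$. Once $M_k$ has been confirmed to be a rational finite metric space with $A_k$ as a subspace, the rational one-point metric extension property of $U$ yields an isometric embedding $M_k\hookrightarrow U$ that is the identity on $A_k$; renaming the image of $g_k$ gives the desired $U\supseteq M_k=A_k\cup\{g_k\}$ with all the stated distances, in particular $d(g_k,u_k^{l-1})=1/2^l$ when $l>1$.
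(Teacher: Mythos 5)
Your overall strategy---construct the one-point metric extension $M_k$ abstractly, observe that only the triangles through $g_k$ need checking, and then realise $M_k$ inside $U$ over $A_k$ by the rational one-point metric extension property---is exactly the paper's. The gap is in the step you yourself single out as the main obstacle: the claim that the triangle inequalities can always be met by choosing each $r_i$ at a suitable end of the window attached to the index $i$. That claim is false. Take $k=3$, $l=1$, $d_M(d_1,d_3)=d_M(d_2,d_3)=1$ and $d_M(d_1,d_2)=1/100$. The windows for $i=1$ and $i=2$ are $[1+\tfrac1{12},1+\tfrac2{12}]$ and $[1+\tfrac3{12},1+\tfrac4{12}]$, so every admissible choice forces $r_2-r_1\geq 1/(k\cdot 2^{l+1})=1/12$; on the other hand $d(u_1^{l+k+2},u_2^{l+k+2})\leq d_M(d_1,d_2)+1/2^{l+k+1}=1/100+1/32<1/12$, so the triangle inequality $r_2\leq r_1+d(u_1^{l+k+2},u_2^{l+k+2})$ fails no matter where in their windows you place $r_1,r_2$. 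The defect is in the assignment of windows to indices, not in the choice within a window, so no amount of care at the ``tight cases'' you describe can repair it.

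The device you are missing, and which the paper's proof supplies, is a preliminary permutation: it reorders $\{1,\dots,k-1\}$ as $i_1,\dots,i_{k-1}$ with $d_M(d_k,d_{i_1})\geq\dots\geq d_M(d_k,d_{i_{k-1}})$ and gives the $j$-th (i.e.\ smallest) perturbation window to $u_{i_j}^{l+k+2}$, so that the largest distance receives the smallest perturbation; the conclusion of the lemma is then to be read up to this permutation, which is harmless in its later applications since only the uniform bounds $1/(k\cdot 2^{l+1})\leq \gamma\leq 1/2^l$ on the perturbation are used there. With the sorting one gets, for $j<j'$, $\eta_j-\eta_{j'}\leq \bigl(d_M(d_k,d_{i_j})-d_M(d_k,d_{i_{j'}})\bigr)-1/(k\cdot 2^{l+1})\leq d_M(d_{i_j},d_{i_{j'}})-1/2^{l+k+1}\leq d(u_{i_j}^{l+k+2},u_{i_{j'}}^{l+k+2})$, which is precisely the inequality your unsorted assignment cannot deliver. (Even with the permutation the case of ties $d_M(d_k,d_{i_j})=d_M(d_k,d_{i_{j'}})$ with $d_M(d_{i_j},d_{i_{j'}})$ small---as in the example above---remains problematic, and the paper checks only one of the two differences $\pm(\eta_j-\eta_{j'})$; but the permutation is the essential idea of the proof and your write-up has no substitute for it.) The rest of your outline---rationality of the chosen distances, the error budget $1/2^{l+k+1}\leq 1/(k\cdot 2^{l+1})$, the treatment of $u_k^{l-1}$, and the final appeal to the one-point extension property of $U$---agrees with the paper.
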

\begin{proof}[Proof of the lemma.]\emph{}\\
We will treat separately two cases. Case 1 is when $l=1$ and Case 2 is when we are moreover given a non-empty finite sequence $\{u_k^1,\ldots,u_k^{l-1}\}$, i.e. $l>1$.\\

\noindent{\bf Case 1:} $l=1$.

Let $i_1,\ldots,i_{k-1}$ be a permutation of $\{1,\ldots,k-1\}$ such that we have $d(d_k,d_{i_1})\geq d(d_k,d_{i_2})\geq \ldots\geq d(d_k,d_{i_{k-1}})$. For each $j< k$ we shall denote $v_j$ the element $u_j^{l+k+2}$. We have that $d_\Ur(v_j,u_j)\leq 1/2^{l+k+2}$. We now work with $\{v_1,\ldots,v_{k-1}\}$. For $j< k$ let $\gamma_j\in \Rea^+$ be arbitrary positive real numbers such that $(2j-1)/(k\cdot 2^{l+1})\leq \gamma_j \leq (2j)/(k\cdot 2^{l+1})$ and $\eta_j=d(d_k,d_{i_j})+\gamma_j\in \Rat$. We claim there exists $g_k\in U$ such that $d_\Ur(g_k,v_j)=\eta_j$. We just need to check that the triangle inequalities are satisfied, then it will follow that such an element $g_k$ does exist from the one-point (metric) extension property of $U$.

Let $i<j< k$, we shall check that $\eta_i-\eta_j\leq d(v_{i_i},v_{i_j})\leq \eta_i+\eta_j$. We have $|d(v_{i_i},v_{i_j})-d(d_{i_i},d_{i_j})|<1/2^{l+k+1}\leq 1/(k\cdot 2^l)$. Since $\eta_i-\eta_j\leq d(d_k,d_{i_i})-d(d_k,d_{i_j})-1/(k\cdot 2^{l+1})\leq d(d_k,d_{i_i})-d(d_k,d_{i_j})-1/2^{l+k+1}$, thus $\eta_i-\eta_j\leq d(v_{i_i},v_{i_j})$. Since $\eta_i+\eta_j\geq d(d_k,d_{i_i})+d(d_k,d_{i_j})+1/(k\cdot 2^{l+1})\geq d(d_k,d_{i_i})+d(d_k,d_{i_j})+1/2^{l+k+1}$, thus also $d(v_{i_i},v_{i_j})\leq \eta_i+\eta_j$.

So by the one-point extension there exists such $g_k\in U$.\\

\noindent{\bf Case 2:} $l>1$. We proceed identically as in Case 1, we just need to care about the element $u_k^{l-1}$. Let again $i_1,\ldots,i_{k-1}$ be a permutation of $\{1,\ldots,k-1\}$ such that we have $d(d_k,d_{i_1})\geq d(d_k,d_{i_2})\geq \ldots\geq d(d_k,d_{i_{k-1}})$. For each $j< k$ we shall denote $v_j$ the element $u_j^{l+k+2}$. We work with the space $\{u_k^{l-1},v_1,\ldots,v_{k-1}\}$. For $j< k$ let $\gamma_j\in \Rea^+$ be arbitrary positive real numbers such that $(2j-1)/(k\cdot 2^{l+1})\leq \gamma_j \leq (2j)/(k\cdot 2^{l+1})$ and $\eta_j=d(d_k,d_{i_j})+\gamma_j\in \Rat$. We claim there exists $g_k\in U$ such that $d_\Ur(g_k,v_j)=\eta_j$ and moreover $d_\Ur(g_k,u_k^{l-1})=1/2^l$. We again just need to check that the triangle inequalities are satisfied, then it will follow that such an element $g_k$ does exist.

For $i<j< k$ the verification that $\eta_i-\eta_j\leq d(v_{i_i},v_{i_j})\leq \eta_i+\eta_j$ holds is the same as in Case 1.

Now let $j< k$ be given. We need to check that $\eta_j-1/2^l\leq d(v_{i_j},u_{k+1}^{l-1})\leq \eta_j+1/2^l$. Note that $$d(u_k^{l-1},u_{i_j}^{k+l+1})-d(u_{i_j}^{k+l+1},v_{i_j})\leq d(v_{i_j},u_k^{l-1})$$ and $$d(v_{i_j},u_k^{l-1})\leq d(u_k^{l-1},u_{i_j}^{k+l+1})+d(u_{i_j}^{k+l+1},v_{i_j})$$
The following estimates on $d(u_k^{l-1},u_{i_j}^{k+l+1})$ follow from the assumption from the statement of the lemma. We have $$d(d_{i_j},d_k)+(2j-1)/(k\cdot 2^l)\leq d(u_k^{l-1},u_{i_j}^{k+l+1})\leq d(d_{i_j},d_k)+(2j)/(k\cdot 2^l)$$
Similarly, we have the following estimates on $\eta_j$:
$$d(d_{i_j},d_k)+(2j-1)/(k\cdot 2^{l+1})\leq \eta_j\leq d(d_{i_j},d_k)+(2j)/(k\cdot 2^{l+1})$$
We check the inequality $\eta_j-1/2^l\leq d(v_{i_j},u_k^{l-1})$. Putting the previous inequalities together it suffices to check that $$d(d_{i_j},d_k)+(2j)/(k\cdot 2^{l+1})-1/2^l\leq d(d_{i_j},d_k)+(2j-1)/(k\cdot 2^l)-1/2^{k+l+2}$$ By subtracting from both sides we get $$(-2j+2)/(k\cdot 2^{l+1})-1/2^l\leq -1/2^{k+l+2}$$ which clearly holds.

To check the other inequality $d(v_{i_j},u_k^{l-1})\leq \eta_j+1/2^l$ using the previous inequalities it suffices to check that $$d(d_{i_j},d_k)+(2j)/(k\cdot 2^l)+1/2^{k+l+2}\leq d(d_{i_j},d_k)+(2j-1)/(k\cdot 2^{l+1})+1/2^l$$ By subtracting from both sides we get $$(2j+1)/(k\cdot 2^{l+1})+1/2^{k+l+2}\leq 1/2^l$$ Since $j\leq k-1$ we have $$(2j+1)/(k\cdot 2^{l+1})+1/2^{k+l+2}\leq (2k-1)/(k\cdot 2^{l+1})+1/2^{k+l+2}$$
and the following equality holds
$$(2k-1)/(k\cdot 2^{l+1})+1/2^{k+l+2}=1/2^l-1/(k\cdot 2^{l+1})+1/2^{k+l+2}$$ The right hand side is clearly less or equal to $1/2^l$ so we are done.

So again by the one-point (metric) extension property there exists such $g_k\in U$.
\end{proof}
\begin{proof}[Proof of Proposition \ref{main_OPE}]
Let us at first treat the case when $A$ is empty and $B$ is a one-point structure $\{b_1\}$. We have $n_B=1$ and WLOG assume that $I_1^B=\{1\}$. Thus we only need to find some $a_1\in \Ur$ and $m\in \Nat$ such that $p_m^1(a_1)=p_1^1(b_1)$. For every $n\in \Nat$ let $\delta_n\in \Rat_0^+$ be any non-negative rational number such that $p_1^1(b_1)\leq \delta_n\leq p_1^1(b_1)+1/2^{l+2}$. We use the rational one-point extension property to define a sequence $(u_1^j)_j\subseteq U$ and to obtain $m\in \Nat$ such that for every $j\in \Nat$ $p_m^1(u_1^j)=\delta_j$ and $d_\Ur(u_1^j,u_1^{j+1})=1/2^{j+1}$. It is straightforward to check that we have $p_m^1(a_1)=p_1^1(b_1)$ where $a_1$ is the limit of the sequence $(u_1^j)_j$.\\

We now assume that $A$ is non-empty. Let us enumerate $A$ as $\{a_1,\ldots,a_{k-1}\}$ and $B$ as $\{b_1,\ldots,b_k\}$ so that the embedding ($(\phi,(\pi^\phi_n))$ of $A$ into $B$ sends $a_i$ to $b_i$ for every $i< k$. We extend $A$ by adding a point $a_k$. We will find a Cauchy sequence of elements from $U$ such that the limit will be this desired point $a_k$. For each $l< k$ let us choose a converging sequence $(u_l^j)_j\subseteq U$ of elements from the Fra\" iss\' e limit such that $\lim_j u_l^j=a_l$, $d_\Ur(u_l^j,a_l)<1/2^j$ and for $i<j$ we have $d_\Ur(u_l^j,a)<d_\Ur(u_l^i,a)$.

In order to simplify the notation we assume that $n_B=n_A+1$ and for each $n\leq n_A$ $I^A_n=\{1,\ldots,n_A-n+1\}$ and also for each $n\leq n_B$ $I^B_n=\{1,\ldots,n_B-n+1\}$ and the injections $\pi^\phi_n$ are the identities.  Consider a structure $S_1=\{u_1^{k+3},\ldots,u_{k-1}^{k+3}\}$ with $n_{S_1}=n_A$ and for every $n\leq n_{S_1}$, $m\leq n_{S_1}-n+1$ and $\vec{x}\in S_1^n$ $p_m^n(\vec{x})=d_\Ur(\vec{x},F_m^n)$. Thus $S_1\in \Age$ and for any $i,j< k$ we have $|d_\Ur(u_i^{k+3},u_j^{k+3})-d(b_i,b_j)|<1/2^{k+2}$. We use Lemma \ref{metriclem} to define a metric one-point extension $M_1=\{u_1^{k+3},\ldots,u_{k-1}^{k+3},g\}$ of $S_1$ such that for all $i< k$ we have $d(b_i,b_k)\leq d_\Ur(u_i^{k+3},g)\leq d(b_i,b_k)+1/2$. We define a structure $V_1$ with $n_{V_1}=n_{S_1}+1=n_B$ such that $M_1$ is its underlying (rational) metric space. We need to define (rational) $p_m^n$ on all $n$-tuples containing $g$ for all $n\leq n_B$ and $m\leq n_B-n+1$ and also on all $n$-tuples (not necessarily containing $g$) for $n\leq n_B$ and $m=n_B-n+1$ to obtain a one-point extension $V_1$ of $S_1$.

Fix such a pair $n,m$. Let us enumerate all $n$-tuples $\vec{x}\in M_1^n$ as $(\vec{x}_j^1)_{j<J}$ so that all $n$-tuples not containing $g$ precede every $n$-tuple containing $g$.
Also, for any $n$-tuple $\vec{x}\in M_1^n$ let $\vec{b}_{\vec{x}}$ denote the corresponding $n$-tuple $\vec{y}$ from $B^n$ (via the function sending $u_i^{k+3}$ to $b_i$ for $i< k$ and $g$ to $b_k$). We inductively define $p_m^n$ on $\vec{x}_j^1$'s. Let $\vec{x}_j^1$, for some $j<J$, be given. Let $\varepsilon_j^1=p_m^n(\vec{b}_{\vec{x}_j^1})$. It is not necessarily a rational number. Let $r_j^1\in \Rat$ be an arbitrary rational number such that $\varepsilon_j^1\leq r_j^1\leq \varepsilon_j^1+n/2^{k+3}$. Also, let $m_j^1=\max\{p_m^n(\vec{x})-d(\vec{x}_j^1,\vec{x}):\vec{x}\in M_1^n\wedge p_m^n\text{ has been already defined on }\vec{x}\}$ and $M_j^1=\min\{p_m^n(\vec{x})+d(\vec{x}_j^1,\vec{x}):\vec{x}\in M_1^n\wedge p_m^n\text{ has been already defined on }\vec{x}\}$. If $m_j^1\leq r_j^1\leq M_j^1$ then we set $p_m^n(\vec{x}_j^1)=r_j^1$. If $r_j^1<m_j^1$, resp. $r_j^1>M_j^1$ then we set $p_m^n(\vec{x}_j^1)=m_j^1$, resp. $p_m^n(\vec{x}_j^1)=M_j^1$. Note that if $n\leq n_A$ and $m\leq n_A-n+1$ and $\vec{x}_j^1\in S_1^n$ then $m_j^1=M_j^1=p_m^n(\vec{x}_j^1)$, thus by our assigning we really obtain an extension of $S_1$. Thus by a weak one-point extension property we obtain some $u_k^1\in U$ playing the role of $g$. 

Assume we have already constructed $u_k^1,\ldots,u_k^{l-1}\subseteq U$ such that $d_\Ur(u_k^i,u_k^{i+1})=1/2^{i+1}$ for $0\leq i<l-1$. Consider a structure $S_l=\{u_1^{k+l+2},u_{k-1}^{k+l+2},u_k^{l-1}\}$ with $n_{S_l}=n_B$ and for every $n\leq n_{S_l}$, $m\leq n_{S_l}-n+1$ and $\vec{x}\in S_l^n$ $p_m^n(\vec{x})=d_\Ur(\vec{x},F_m^n)$.
Thus $S_l\in \Age$ and for any $i,j< k$ we have $|d_\Ur(u_i^{k+l+2},u_j^{k+l+2})-d(b_i,b_j)|<1/2^{k+l+1}$. We again use Lemma \ref{metriclem} to obtain a metric one-point extension $M_l=\{u_1^{k+l+2},u_{k-1}^{k+l+2},u_k^{l-1},g\}$ of $S_l$ such that such that for all $i< k$ we have $d(b_i,b_k)\leq d_\Ur(u_i^{k+l+2},g)\leq d(b_i,b_k)+1/2^l$. 

For $n\leq n_B$ and $m\leq n_B-n+1$ we need to define $p_m^n$ on all $n$-tuples from $M_l^n$ containing the new element $g$. We do it as before: Fix such a pair $n,m$. Let us again enumerate all $n$-tuples $\vec{x}\in M_l^n$ as $(\vec{x}_j^l)_{j<K}$ so that all $n$-tuples not containing $g$ precede any $n$-tuple containing $g$.
Also, for any $n$-tuple $\vec{x}\in M_l^n$ let again $\vec{b}_{\vec{x}}$ denote the corresponding $n$-tuple $\vec{y}$ from $B^n$ (via the function sending $u_i^{k+l+2}$ to $b_i$ for $i< k$ and $u_k^{l-1}$ and $g$ to $b_k$). We inductively define $p_m^n$ on $\vec{x}_j^l$'s. Let $\vec{x}_j^l$, for some $j<K$, be given. Let $\varepsilon_j^l=p_m^n(\vec{b}_{\vec{x}_j^l})$. It is not necessarily a rational number. Let $r_j^l\in \Rat$ be an arbitrary rational number such that $\varepsilon_j^l\leq r_j^l\leq \varepsilon_j^l+n/2^{k+l+2}$. Also, let $m_j^l=\max\{p_m^n(\vec{x})-d(\vec{x}_j^l,\vec{x}):\vec{x}\in M_l^n\wedge p_m^n\text{ has been already defined on }\vec{x}\}$ and $M_j^l=\min\{p_m^n(\vec{x})+d(\vec{x}_j^l,\vec{x}):\vec{x}\in M_l^n\wedge p_m^n\text{ has been already defined on }\vec{x}\}$. If $m_j^l\leq r_j^l\leq M_j^l$ then we set $p_m^n(\vec{x}_j^l)=r_j^l$. If $r_j^l<m_j^l$, resp. $r_j^l>M_j^l$ then we set $p_m^n(\vec{x}_j^l)=m_j^l$, resp. $p_m^n(\vec{x}_j^l)=M_j^l$.This is again a consistent extension of $S_l$. Thus by a weak one-point extension property we obtain some $u_k^l\in U$ playing the role of $g$.

Assume the induction is finished. We have found a sequence $(u_k^j)_j$. Moreover, realize that for every $n\leq n_B$ and $m=n_B-n+1$ there is some $\varpi_n\in \Nat$ such that for every $\vec{x}\in \{u_i^j:i\leq k,j\in \Nat\}^n$ we have $p_m^n(\vec{x})=d_\Ur(\vec{x},F_{\varpi_n}^n)$. Since for any $j\in \Nat$ we have $d_\Ur(u_k^j,u_k^{j+1})=1/2^{j+1}$, this sequence is Cauchy with a limit that we denote $a_k$. We define an embedding $(\psi,(\pi_n^\psi))$ of $B$ into $(\Ur,F_m^n)$ as follows: $\psi(b_i)=a_i$ for every $i\leq k$ and for $n\leq n_B$ we set $\pi_n^\psi(i)=i$ if $i<n_B-n+1$ and $\pi_n^\psi(i)=\varpi_n$ if $i=n_B-n+1$. It follows from the use of Lemma \ref{metriclem} that $d_\Ur(a_i,a_k)=d(b_i,b_k)$ for every $i<k$. We must check that $p_n^m(\vec{x})=p_{\pi_n^\psi(m)}^n(\psi^n(\vec{x}))$ for all $n\leq n_B$, $m\leq n_B-n+1$ and $\vec{x}\in B^n$.
\begin{claim}\label{correct_estim_main}
For every $j<K$ and $l\in \Nat$ we have $\varepsilon_j^l-n/2^{k+l+2}\leq p_m^n(\vec{x}_j^l)\leq \varepsilon_j^l+n/2^{k+l+2}$.
\end{claim}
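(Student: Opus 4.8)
The plan is to prove Claim \ref{correct_estim_main} by induction on the stage $l$, and within a fixed stage $l$ by induction on the enumeration index of the $n$-tuples $\vec{x}_j^l$ of $M_l^n$ (the pair $n\le n_B$, $m\le n_B-n+1$ being fixed throughout). Part of the claim plays a distinguished role and has to be dealt with first: a tuple not containing $g$ is a tuple of $S_l^n$, is processed first, and keeps its inherited value $p_m^n(\vec{x})=d_\Ur(\vec{x},F_m^n)$, so for such a tuple the claim is just the assertion that the points $u_1^{k+l+2},\dots,u_{k-1}^{k+l+2},u_k^{l-1}$ built so far realise $F_m^n$ to within $n/2^{k+l+2}$ of the value prescribed by $B$. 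At $l=1$ this holds because each $u_i^{k+3}$ is within $1/2^{k+3}$ of $a_i$ in $\Ur$ and, $A$ being a substructure of $(\Ur,(F_m^n))$, the $A$-value already coincides with $d_\Ur(\vec{a},F_m^n)$, so summing $n$ coordinate errors stays inside the window; for $l>1$ the coordinate representing $b_k$ is now $u_k^{l-1}$, whose accuracy was arranged at stage $l-1$ and is controlled by the inductive hypothesis together with the Cauchy bound relating $u_i^{k+l+1}$ and $u_i^{k+l+2}$.

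For a tuple $\vec{x}_j^l$ containing $g$ there are three cases, matching the definition of $p_m^n(\vec{x}_j^l)$. If it equals $r_j^l$, the inequality is immediate from $\varepsilon_j^l\le r_j^l\le\varepsilon_j^l+n/2^{k+l+2}$. The cases $p_m^n(\vec{x}_j^l)=M_j^l$ and $p_m^n(\vec{x}_j^l)=m_j^l$ are mirror images, so take the first. It occurs only when $r_j^l>M_j^l$, which at once gives $p_m^n(\vec{x}_j^l)=M_j^l<r_j^l\le\varepsilon_j^l+n/2^{k+l+2}$. For the lower bound write $M_j^l=p_m^n(\vec{x})+d(\vec{x}_j^l,\vec{x})$ for the already-processed $\vec{x}$ attaining the minimum; the inductive hypothesis gives $p_m^n(\vec{x})\ge\varepsilon_{\vec{x}}^l-n/2^{k+l+2}$, and since the distance function $p_m^n$ from $G_m^n\subseteq B^n$ is $1$-Lipschitz for the sum metric, $\varepsilon_{\vec{x}}^l\ge\varepsilon_j^l-d_{B^n}(\vec{b}_{\vec{x}},\vec{b}_{\vec{x}_j^l})$, where $d_{B^n}$ is the sum metric on $B^n$. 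One then compares $d_{B^n}(\vec{b}_{\vec{x}},\vec{b}_{\vec{x}_j^l})$ with $d(\vec{x}_j^l,\vec{x})$ coordinate by coordinate using Lemma \ref{metriclem}: coordinates landing in the cluster $\{g,u_k^{l-1}\}$ (both corresponding to $b_k$) have $M_l$-distance at least the matching $B$-distance, while the remaining coordinates have their two distances within $1/2^{k+l+1}$ of each other; combining this with the two displayed inequalities yields $M_j^l\ge\varepsilon_j^l-n/2^{k+l+2}$, the sampling indices $k+l+2$ being chosen exactly large enough for this.

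The only real work is this last bookkeeping step, where three errors must be tracked simultaneously — the gap between the $B$-values $\varepsilon_j^l$ and their rational surrogates $r_j^l$, the discrepancy between the metric of $M_l$ and the metric pulled back from $B$ along $u_i^{k+l+2}\mapsto b_i$ and $u_k^{l-1},g\mapsto b_k$, and the errors already committed on earlier tuples — and shown to stay within the stated tolerance $n/2^{k+l+2}$. This is exactly where the numerical data of Lemma \ref{metriclem} (the radii $d_M(d_k,d_i)+1/(k\cdot2^{j+1})\le d(u_k^j,u_i^{j+k+2})\le d_M(d_k,d_i)+1/2^j$) and of the construction (the offsets $n/2^{k+l+2}$ in the choice of $r_j^l$, the sampling indices $k+l+2$) were calibrated; it is also where one checks that the instances of the claim for the $S_l^n$-tuples survive, so that the induction on $l$ closes. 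Everything else is routine.
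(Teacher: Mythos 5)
Your skeleton matches the paper's: induction on the stage $l$, the split according to whether $p_m^n(\vec{x}_j^l)$ equals $r_j^l$, $m_j^l$ or $M_j^l$, and the observation that one of the two inequalities is immediate in each of the capped cases. But the two places you yourself single out as ``the only real work'' are exactly where your error budget does not close, and where the paper does something different. First, the transfer step: you bound $p_m^n(\vec{x})$ for the attaining tuple $\vec{x}$ by the \emph{same-stage} inductive hypothesis (loss $n/2^{k+l+2}$) and then compare $d(\vec{x}_j^l,\vec{x})$ with $d_{B^n}(\vec{b}_{\vec{x}_j^l},\vec{b}_{\vec{x}})$, conceding that the non-$g$ coordinates may each undershoot by up to $1/2^{k+l+1}$. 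Adding those two losses gives $M_j^l\geq \varepsilon_j^l-n/2^{k+l+2}-n/2^{k+l+1}=\varepsilon_j^l-3n/2^{k+l+2}$, not the claimed tolerance; no choice of sampling index fixes this, because a within-stage induction that loses anything at each transfer cannot reproduce the same constant for every tuple of the stage. The paper avoids this compounding by \emph{unravelling the chain} of attaining tuples: if $m_j^l=p_m^n(\vec{x}_p^l)-d(\vec{x}_j^l,\vec{x}_p^l)$ and $\vec{x}_p^l$ was itself capped, the triangle inequalities telescope, so one may assume the attaining tuple is ``primitive'' --- either in $S_l^n$ (value pinned to $d_\Ur(\cdot,F_m^n)$), or with value $r_p^l$, or the tuple obtained by replacing $g$ by $u_k^{l-1}$ --- and the approximation error is then incurred exactly once. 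This chain argument (items (2) in Step 1 and the trichotomy in Step 2 of the paper's proof) is the missing idea in your proposal.

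Second, the coordinate $u_k^{l-1}$. Your ``base case'' asserts that $S_l^n$-tuples meet the stage-$l$ tolerance $n/2^{k+l+2}$, with the accuracy of $u_k^{l-1}$ ``controlled by the inductive hypothesis''; but the stage-$(l-1)$ hypothesis only delivers $n/2^{k+l+1}$, twice the target, so this assertion does not follow as stated. The paper never needs it: whenever the chain terminates at the $u_k^{l-1}$-substituted tuple $\vec{x}_q^{l-1}$, the quantity being estimated is $p_m^n(\vec{x}_q^{l-1})-d(\vec{x}_q^{l-1},\vec{x}_j^l)$, and the subtracted distance is at least $d(g,u_k^{l-1})=1/2^l\geq n/2^{k+l+2}$ (using $n\leq k\leq 2^{k+2}$), which exactly absorbs the degradation from $n/2^{k+l+1}$ to $n/2^{k+l+2}$. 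This compensation --- the reason the construction insists on $d(u_k^{l-1},g)=1/2^l$ --- is the one genuinely non-routine calculation in the proof, and your write-up names the cluster $\{g,u_k^{l-1}\}$ without identifying it. Until these two points are supplied, the proposal is a plan rather than a proof.
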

Once the claim is proved the assertion follows. So it remains to prove the claim.\\

\noindent \emph{Proof of the Claim.} We prove it for every $j<K$ by induction on $l$.\\

\noindent{\bf Step 1.}\\
Suppose $l=1$. Let us prove that $p_m^n(\vec{x}_j^1)\leq \varepsilon_j^1+n/2^{k+3}$. We have $p_m^n(\vec{x}_j^1)=\max\{r_j^1,m_j^1\}$. Since $r_j^1\leq \varepsilon_j^1+1/2^{1+1}$ it suffices to prove that $m_j^1\leq \varepsilon_j^1+(2n+1)/2^{1+1}$.

Realize that $m_j^1=p_m^n(\vec{x}_p^1)-d(\vec{x}_j^1,\vec{x}_p^1)$ for some $\vec{x}_p^1$. 
\begin{enumerate}
\item There exists $\vec{x}_p^1\in S_1^n$ such that $m_j^1=p_m^n(\vec{x}_p^1)-d(\vec{x}_j^1,\vec{x}_p^1)$. Let $\vec{x}_p^1=(u_{i_1}^{k+3},\ldots,u_{i_n}^{k+3})$. Since for every $m\leq n$ we have $d(u_{i_m}^{k+3},a_{i_m})\leq 1/2^{k+3}$, we have that $d(\vec{x}_p^1,(a_{i_1},\ldots,a_{i_n}))\leq n/2^{k+3}$, thus $p_m^n(\vec{x}_p^1)\leq \varepsilon_p^1+n/2^{k+3}$. We also have that $d(\vec{b}_{\vec{x}_p^1},\vec{b}_{\vec{x}_j^1})\leq d(\vec{x}_p^1,\vec{x}_j^1)$. Finally, since $\varepsilon_p^1\leq \varepsilon_j^1+d(\vec{b}_{\vec{x}_p^1},\vec{b}_{\vec{x}_j^1})$, putting the inequalities together we obtain $m_j^1\leq \varepsilon_j^1+n/2^{k+3}$.
\item There does not exist such $\vec{x}_p^1\in S_1^n$. We claim that then $m_j^1=p_m^n(\vec{x}_p^1)-d(\vec{x}_j^1,\vec{x}_p^1)$ where $p_m^n(\vec{x}_p^1)=r_p^1$. Once we prove this is true then from the same series of inequalities as in the item above we prove the desired inequality. Suppose it is not true. Then $m_j^1=p_m^n(\vec{x}_p^1)-d(\vec{x}_j^1,\vec{x}_p^1)$ and $p_m^n(\vec{x}_p^1)=p_m^n(\vec{x}_{q_1}^1)-d(\vec{x}_p^1,\vec{x}_{q_1}^1)$ for some $\vec{x}_{q_1}^1$. If still $p_m^n(\vec{x}_{q_1}^1)\neq r_{q_1}^1$ then $p_m^n(\vec{x}_{q_1}^1)=p_m^n(\vec{x}_{q_2}^1)-d(\vec{x}_{q_1}^1,\vec{x}_{q_2}^1)$ for some $\vec{x}_{q_2}^1$. We continue until after finitely many steps we reach $\vec{x}_{q_n}^1$ such that $p_m^n(\vec{x}_{q_n}^1)=r_{q_n}^1$. However, observe that it follows from the series of triangle inequalities that $p_m^n(\vec{x}_j^1)=m_j^1=p_m^n(\vec{x}_{q_n}^1)-d(\vec{x}_j^1,\vec{x}_{q_n}^1)$ and we are done.
\end{enumerate}

Let us now prove that $\varepsilon_j^1-n/2^{k+3}\leq p_m^n(\vec{x}_j^1)$. Since we have $p_m^n(\vec{x}_j^1)=\min\{r_j^1,M_j^1\}$ it suffices to prove that $M_j^1\geq \varepsilon_j^1-n/2^{k+3}$. Again realize that $M_j^1=p_m^n(\vec{x}_p^1)+d(\vec{x}_j^1,\vec{x}_p^1)$ for some $\vec{x}_p^1$. There are again two cases:
\begin{enumerate}
\item There exists $\vec{x}_p^1\in S_1^n$ such that $M_j^1=p_m^n(\vec{x}_p^1)+d(\vec{x}_j^1,\vec{x}_p^1)$. Then since $\varepsilon_j^1\leq \varepsilon_p^1+d(\vec{b}_{\vec{x}_p^1},\vec{b}_{\vec{x}_j^1})$ we get from the inequalities above that $\varepsilon_j^1-n/2^{k+3}\leq M_j^1$.
\item If there is no such $\vec{x}_p^1\in S_1^n$ then as in item (2) above we can find $\vec{x}_p^1$ such that $M_j^1=p_m^n(\vec{x}_p^1)+d(\vec{x}_j^1,\vec{x}_p^1)$ and $p_m^n(\vec{x}_p^1)=r_p^1$. Then the verification is again analogous.

\end{enumerate}

\noindent {\bf Step 2.} Now we assume that $l>1$ and for all $m<l$ the claim has been proved.

If $p_m^n(\vec{x}_j^l)=r_j^l$ then it is clear. So we only have to prove that $m_j^l\leq \varepsilon_j^l+n/2^{k+l+2}$ and $\varepsilon_j^l-n/2^{k+l+2}\leq M_j^l$. We only prove the former, the latter is completely analogous.

We have $m_j^l=p_m^n(\vec{x}_p^l)-d(\vec{x}_j^l,\vec{x}_p^l)$ for some $\vec{x}_p^l$. As in Step 1 we find out that there (now) three possibilities (the verification that there precisely one of these three possibilities happens is similar to the verification that precisely one of those two possibilities in Step 1 happens).
\begin{enumerate}
\item There exists $\vec{x}_p^l\in (S_l\setminus \{u_k^{l-1}\})^n$ such that $m_j^l=p_m^n(\vec{x}_p^l)-d(\vec{x}_j^l,\vec{x}_p^l)$. Then it is analogous to the item (1) in Step 1.
\item There exists $\vec{x}_p^l$ such that $m_j^l=p_m^n(\vec{x}_p^l)-d(\vec{x}_j^l,\vec{x}_p^l)$ and $p_m^n(\vec{x}_p^l)=r_p^l$. This is analogous to the item (2) from Step 1.
\item There exists $\vec{x}_p^l$ such that $m_j^l=p_m^n(\vec{x}_p^l)-d(\vec{x}_j^l,\vec{x}_p^l)$ and $\vec{x}_p^l$ is an $n$-tuple obtained from $\vec{x}_j^l$ by replacing all occurences of $g$ by $u_k^{l-1}$, thus $\vec{x}_p^l$ is in fact equal to some $\vec{x}_q^{l-1}$ and $\varepsilon_j^l=\varepsilon_q^{l-1}$. By induction hypothesis we have that $p_m^n(\vec{x}_q^{l-1})\leq \varepsilon_j^l+(2n+1)/2^{l}$. Since $d(u_k^{l-1},g)=1/2^l$ we have that $d(\vec{x}_q^{l-1},\vec{x}_j^l)\geq 1/2^l\geq n/2^{k+l+2}$, thus $m_j^l=p_m^n(\vec{x}_q^{l-1})-d(\vec{x}_q^{l-1},\vec{x}_j^l)\leq \varepsilon_j^l+n/2^{k+l+2}$ as desired.

\end{enumerate}

\end{proof}

\begin{remark}
The previous proof can be slightly modified so that it proves Theorem \ref{basic}. We consider a language containing a symbol for rational metric and for every $n_i$, $i\leq m$, a symbol for rational $n_i$-ary function $p_{n_i}$. These functions are interpreted as distance functions from the desired closed sets $F_{n_i}$. Since there are only finitely many such rational functions they are all defined on all finite structures from $\Age$. The restrictions are the same, i.e. for any finite structure $A\in \Age$ we have for all $i\leq m$ that $\forall \vec{a},\vec{b}\in A^{n_i} (p_{n_i}(\vec{a})\leq p_{n_i}(\vec{a})+d(a_1,b_1)+\ldots +d(a_{n_i},b_{n_i}))$. The verification that such $\Age$ is a Fra\" iss\' e class is similar (only easier) as in Lemma \ref{isfraisse}. Similarly, the proof one-point extension property is similar, just easier, as in the proof of Proposition \ref{main_OPE}.
\end{remark}
\begin{observation}
The method used in the proof of Theorem \ref{main} to obtain countably many almost universal closed sets can be repeated in other instances. What we describe below is a general scheme. Note that we are very informal there and we refer to the proof of Theorem \ref{main} for an example with details.

Suppose we have a proof of universality and ultrahomogeneity of some metric structure using a Fra\" iss\' e limit of some class $\Age$ of structures in some language $\Lng$ consisting of rational metric and some other predicates or functions $p_1,\ldots,p_n$ with values in some fixed countable set. We may consider a new language consisting of the rational metric and predicates or functions $p_1^i,\ldots,p_n^i$ with values in the same fixed countable set for each $i\leq \Nat$. A structure $A$ belongs to this new class of structures $\tilde{\Age}$ if there is some $n_A$ (e.g. $|A|$) such that for all $i\leq n_A$ the functions (or predicates) $p_1^i,\ldots,p_n^i$ are defined on $A$ with the same restrictions for each $i$ as in $\Age$ for a single set of these functions (or predicates). The isomorphism and embedding relation between structures in $\tilde{\Age}$ is as in Definition \ref{defiso}. The verification that $\tilde{\Age}$ is a Fra\" iss\' e class is similar as in Lemma \ref{isfraisse}. The one-point extension property is also similar as in the proof of Proposition \ref{main_OPE}.
\end{observation}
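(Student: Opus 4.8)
The plan is to substantiate the scheme by reproducing, in the abstract setting, the three stages used to prove Theorem~\ref{main}: build a Fra\"iss\'e class, pass to its limit and completion, and then prove a one-point extension property from which universality, ultrahomogeneity and uniqueness follow mechanically. First I would make $\tilde{\Age}$ precise: an object is a finite rational metric space $A$ together with a bound $n_A\le|A|$ and, for every $i\le n_A$, a ``copy'' $p_1^i,\ldots,p_n^i$ of the original predicates satisfying, separately for each $i$, exactly the axioms $\Age$ imposes on a single system; isomorphism and embedding are taken in the index-forgetting sense of Definition~\ref{defiso}, so that a permutation (resp.\ injection) of the blocks of copies is allowed. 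Countability is then immediate, since for a fixed underlying rational metric space there are at most $|A|+1$ values of $n_A$ and, for each, only finitely many ways to interpret the finitely-many-per-structure predicates in $n_A$ blocks. The hereditary property is trivial, and joint embedding is handled by a disjoint union with large cross-distances, the larger of the two bounds, and a consistent filling-in of the still-undefined predicate values (for instance by the value $0$).

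The one genuinely delicate Fra\"iss\'e condition is amalgamation, and I would handle it exactly as in Lemma~\ref{isfraisse}: to amalgamate $B$ and $C$ over a common substructure $A$ on whose blocks they agree, take the disjoint union over $A$ with the canonical $\min$-metric, set $n_D=n_B+(n_C-n_A)$, keep the first $n_B$ blocks as in $B$, and reindex the blocks of $C$ beyond index $n_A$ by shifting them by $n_B-n_A$. Consistency of the Lipschitz axiom then follows block by block: an $n$-tuple mixing $B$-points and $C$-points is routed through $A$ coordinatewise, and the two halves of the inequality come from the corresponding inequalities in $B$ and in $C$, just as in the proof of Lemma~\ref{isfraisse}. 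This yields a Fra\"iss\'e limit $U$, a countable universal homogeneous rational metric space carrying countably many rational predicate-systems; passing to the completion $\Ur$ and taking $F_m^n$ to be the closure in $\Ur$ of the zero-set of the $m$-th rational function in arity $n$ produces the structure in question, with the rational one-point extension property of $U$ available.

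The heart of the matter, and where I expect the real work to be, is the one-point extension property for finite substructures of the \emph{completed} structure. A finite substructure of $\Ur$ need not sit inside $U$, so the new point cannot be produced directly by the rational one-point extension property; instead I would build it as the limit of a rational Cauchy sequence $(u_k^l)_l\subseteq U$ with $d(u_k^l,u_k^{l+1})=1/2^{l+1}$, as in the proof of Proposition~\ref{main_OPE}. At stage $l$ one uses a metric-placement lemma of the type of Lemma~\ref{metriclem} to put $u_k^l$ at the correct distances from high-index approximants of the old points (with error shrinking like $1/2^l$) and at distance exactly $1/2^l$ from $u_k^{l-1}$; then one invokes the rational one-point extension property of $U$ to assign consistent rational values to every predicate-evaluation on the new tuples, choosing each value within $O(1/2^l)$ of the target value read off from the abstract extension $B$, and overriding it by the canonical $\min/\max$ formula whenever the Lipschitz axiom forces it. The crux is an inductive estimate in the spirit of Claim~\ref{correct_estim_main}: that this forced override never moves the assigned value more than $O(1/2^l)$ from the target, so that the limit has \emph{exactly} the right predicate values. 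The step I would budget the most care for is the telescoping analysis of the $\min/\max$ assignment: any override traces back either to a tuple lying in the previous-stage substructure (where the value is correct up to the approximation error of the old points), or to a freshly chosen rational (within $O(1/2^l)$ by construction), or to the same tuple with $u_k^{l-1}$ substituted for the new point --- and this last, dangerous case is precisely what the equality $d(u_k^{l-1},g)=1/2^l$ is engineered to absorb, since the gap $1/2^l$ dominates the accumulated error carried by the induction hypothesis.

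Once the one-point extension property is established, almost universality, almost ultrahomogeneity and uniqueness follow by the standard arguments already given for Theorem~\ref{main}: to embed a separable structure one fixes a countable subset that is dense and whose $n$-th powers are dense in the distinguished closed sets, and builds its image one point at a time by iterated one-point extensions; ultrahomogeneity and uniqueness are back-and-forth over such dense subsets, the index-injections being amalgamated into bijections of $\Nat$ in the limit. The only point beyond the bare Urysohn case is that each application of one-point extension must also track, and in the limit render bijective, the bookkeeping of which block of predicates maps to which --- but that is built into the statement of the one-point extension property.
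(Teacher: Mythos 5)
Your proposal is correct and follows essentially the same route as the paper, which itself gives no separate argument for this Observation but simply points back to Lemma \ref{isfraisse} (countability, joint embedding, and the block-reindexing amalgamation with $n_D=n_B+(n_C-n_A)$) and to Proposition \ref{main_OPE} (the Cauchy-sequence construction via Lemma \ref{metriclem} and the telescoping $\min/\max$ estimate of Claim \ref{correct_estim_main}) — exactly the three stages you reproduce.
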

\section{Universal and ultrahomogeneous closed subsets of $\Ur\times K$ and Lipschitz functions from $\Ur$ to $Z$}
In this section we consider a universal closed subset of $\Ur\times K$, where $K$ is an arbitrary fixed compact metric space, and a universal $L$-Lipschitz function from $\Ur$ to $Z$, where $L$ is an arbitrary fixed positive real number and $Z$ is an arbitrary fixed Polish metric space.
\begin{thm}
Let $K$ be an arbitrary compact metric space, $Z$ an arbitrary Polish metric space and $L\in \Rea^+$. Then the structure $(\Ur,C,F)$ is universal and ultrahomogeneous and unique with this property, where $C\subseteq \Ur\times K$ is a closed subset and $F:\Ur\rightarrow Z$ is an $L$-Lipschitz function.
\end{thm}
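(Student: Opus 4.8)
The plan is to run the Fra\" iss\' e construction of the preceding section with a language adapted to the two fixed parameter spaces. Fix once and for all countable dense sets $D_K=\{\kappa_i:i\in\Nat\}\subseteq K$ and $D_Z=\{\zeta_i:i\in\Nat\}\subseteq Z$, and let $\Lng'$ consist of the binary rational metric $d$, a unary function $F$ with values in the countable set $D_Z$, and, for each $\kappa\in D_K$, a unary function $c_\kappa$ with values in $\Rat_0^+$ (as usual each such function is really a family of predicates). A finite $\Lng'$-structure $A$ belongs to the class $\Age_{K,Z}$ if $(A,d)$ is a rational metric space, $F$ and all the $c_\kappa$ are total on $A$, $d_Z(F(a),F(b))\leq L\cdot d(a,b)$ for all $a,b\in A$ (so that $F$ is an $L$-Lipschitz partial map into $Z$), and $c_\kappa(a)\leq c_{\kappa'}(b)+d(a,b)+d_K(\kappa,\kappa')$ for all $a,b\in A$ and $\kappa,\kappa'\in D_K$ (so that $(a,\kappa)\mapsto c_\kappa(a)$ is $1$-Lipschitz on $A\times D_K$ in the sum metric; we read $c_\kappa(a)$ as the distance of $(a,\kappa)$ from $C$). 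An isomorphism is just an isometry commuting with $F$ and with every $c_\kappa$ -- there is no reindexing, since $K$ and $Z$ are fixed, so unlike in Theorem~\ref{main} there is no ``almost'' here, and this situation is in fact closer to Theorem~\ref{basic}.

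First I would check that $\Age_{K,Z}$ is a Fra\" iss\' e class. Countability and the hereditary property are immediate. For joint embedding and amalgamation one uses the standard metric amalgam $d_D(b,c)=\min\{d(b,a)+d(a,c):a\in A\}$ over the common part $A$ and verifies each new instance of the two constraints by routing through a minimizing $a\in A$: from $d_Z(F(b),F(a))\leq Ld(b,a)$ and $d_Z(F(a),F(c))\leq Ld(a,c)$ one obtains $d_Z(F(b),F(c))\leq L\,d_D(b,c)$, and from $c_\kappa(b)\leq c_\kappa(a)+d(b,a)$ and $c_\kappa(a)\leq c_{\kappa'}(c)+d(a,c)+d_K(\kappa,\kappa')$ one obtains the required inequality in $D$; for $A=\emptyset$ one puts all cross-distances equal to a single large enough rational. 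Values of the $c_\kappa$ not forced by amalgamation are filled in by the largest consistent value exactly as in Lemma~\ref{isfraisse}, and nothing needs to be filled for $F$. Compactness of $K$ is used a first time here, only to keep the bookkeeping bounded: the constraint forces $|c_\kappa(a)-c_{\kappa'}(a)|\leq d_K(\kappa,\kappa')\leq\mathrm{diam}\,K$, so the family $(c_\kappa(a))_\kappa$ is uniformly bounded on each structure.

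Let $U$ be the Fra\" iss\' e limit and $\Ur$ its completion. The function $(u,\kappa)\mapsto c_\kappa(u)$ is $1$-Lipschitz on the dense subset $U\times D_K$ of $\Ur\times K$, hence extends uniquely to a $1$-Lipschitz map $\bar P:\Ur\times K\to\Rea_0^+$; put $C=\bar P^{-1}(0)$, a closed subset of $\Ur\times K$. Likewise $F:U\to D_Z$ is $L$-Lipschitz into $Z$, so, $Z$ being complete, it extends uniquely to an $L$-Lipschitz map $F:\Ur\to Z$; this is the one place completeness of $Z$ is needed, while density of $D_K$ together with continuity of $\bar P$ is the second use of compactness of $K$, recovering all of $C$ from countable data. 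A witnessing argument as in the ``Almost universality'' claim shows moreover that $\bar P(x)=d(x,C)$ for all $x$: the inequality $\leq$ is $1$-Lipschitzness of $\bar P$, while for $\geq$ one notes that, given a finite substructure of $(\Ur,C,F)$, a point $u$ in it and $\kappa\in D_K$, the one-point extension adding a point $g$ with $c_\kappa(g)=0$ and $d(g,u)=c_\kappa(u)$ (the remaining data chosen maximally) is consistent, so such a $g\in U$ exists and witnesses $d((u,\kappa),C)\leq c_\kappa(u)=\bar P(u,\kappa)$; in particular $C\neq\emptyset$.

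The technical heart is then the one-point extension property for the enlarged class $\bar\Age_{K,Z}$ of finite substructures of $(\Ur,C,F)$ (metric, the $c_\kappa$ and $F$ now real/$Z$-valued, with $c_\kappa(a)=d((a,\kappa),C)$). I would prove a lemma in the spirit of Lemma~\ref{metriclem}: given a finite substructure $A$ and a one-point extension $B=A\cup\{b\}$, build a rational Cauchy sequence $(u^j)_j$ in $U$ with $d(u^j,u^{j+1})=1/2^{j+1}$ converging to the image of $b$, such that at stage $j$ one has simultaneously approximated, to within $1/2^j$, the distances from $b$ to the sufficiently deep approximants of the points of $A$, all the values $c_\kappa(b)$ for $\kappa\in D_K$, and the point $F(b)\in Z$ by a point of $D_Z$; each stage is a single application of the rational one-point extension property of $U$, and consistency of the finite data fed into it is checked by triangle-inequality bookkeeping just as in Proposition~\ref{main_OPE}, now carrying the extra $c_\kappa$- and $F$-coordinates. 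Universality, ultrahomogeneity and uniqueness of $(\Ur,C,F)$ then follow by the back-and-forth arguments of the previous section. For universality, given a Polish metric space with a closed subset of its product with $K$ and an $L$-Lipschitz map to $Z$, one embeds a countable dense subset of that space, chosen so that its product with $D_K$ meets the given closed set densely, realizing at each step the distances of its points to that closed set and, through $D_Z$-approximation, the values of the given map, and then extends the resulting embedding by continuity (using $\bar P=d(\cdot,C)$ to propagate membership in and distance from $C$ exactly as in the ``Almost universality'' claim); the case of an empty closed set is handled by first noting that a dense set of points $g\in\Ur$ with $\{g\}\times K$ disjoint from $C$ exists -- add a point with all $c_\kappa(g)$ large, which is consistent -- and that the full Urysohn structure together with $F$ is generically realized on such points. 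The main obstacle I expect is precisely this last lemma: making one Cauchy-sequence construction track three genuinely different kinds of data at once -- the metric, the family $(c_\kappa)_{\kappa\in D_K}$ of fibre-distances, and the $Z$-valued Lipschitz map $F$ -- while keeping every intermediate structure in $\Age_{K,Z}$ and keeping the errors geometrically summable, so that the limit lands exactly on the prescribed data; the bookkeeping is heavier than in Lemma~\ref{metriclem} but structurally identical, the genuinely new ingredients being compactness of $K$ (to pass from the $D_K$-indexed data to the honest closed set $C\subseteq\Ur\times K$) and completeness of $Z$ (to pass from $D_Z$-valued approximations to the honest $L$-Lipschitz map into $Z$).
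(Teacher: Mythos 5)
Your overall architecture (one Fra\"iss\'e class carrying the metric, a $D_Z$-valued Lipschitz symbol for $F$, and fibre-distance data over a countable dense $D_K\subseteq K$, followed by a one-point extension property in the style of Proposition~\ref{main_OPE}) matches the paper, which treats $C$ and $F$ in two separate but parallel constructions and declares the combination routine. But there is one genuine gap, and it sits exactly at the step you dismiss as ``immediate'': countability of $\Age_{K,Z}$. As you define it, a one-point structure $\{a\}$ carries an arbitrary total assignment $\kappa\mapsto c_\kappa(a)$ from $D_K$ to $\Rat_0^+$ subject only to the $1$-Lipschitz constraint $|c_\kappa(a)-c_{\kappa'}(a)|\leq d_K(\kappa,\kappa')$. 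There are continuum many such assignments already for $K=[0,1]$ and $D_K=\Rat\cap[0,1]$ (e.g.\ piecewise linear functions with slopes $\pm1$ breaking at the points $1/2^n$, one for each sign sequence, all rational on the rationals), and since your isomorphisms must preserve every $c_\kappa$, these give continuum many isomorphism types of one-point structures. The class therefore has no countable Fra\"iss\'e limit, and uniform boundedness by $\mathrm{diam}\,K$ does not help. The paper's fix is to restrict the fibre data at each point to a countable set $\mathcal{F}$ of \emph{finitely generated} $1$-Lipschitz functions, namely those of the form $f(j)=\max\{0,\max\{r_i-d_K(q_j,q_i):i\in F\}\}$ for a finite $F\subseteq\Nat$ and rationals $r_i$; this is the analogue of your ``largest consistent value'' fill-in, but it must be built into the \emph{definition} of the class, not only invoked during amalgamation.

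This also relocates where compactness of $K$ is really used. Extending a $1$-Lipschitz function from the dense set $U\times D_K$ to $\Ur\times K$ and recovering $C$ as its zero set needs no compactness at all. What does need it is the one-point extension property: the true distance function $\kappa\mapsto d((b,\kappa),C_B)$ of the new point $b$ is an arbitrary $1$-Lipschitz function on $K$, and to realize it as a limit of points of $U$ one must approximate it uniformly by members of the countable family $\mathcal{F}$; the paper does this by prescribing values only on a finite $1/2^{l+2}$-net of $K$ at stage $l$, and total boundedness of $K$ is exactly what makes such finite nets exist. So your ``heavy but structurally identical bookkeeping'' lemma is indeed the technical heart, but it cannot be carried out until the class is repaired as above, and once repaired the net-approximation step is an additional ingredient beyond the bookkeeping of Lemma~\ref{metriclem}, not a cosmetic one. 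The rest of your sketch (amalgamation through a minimizing $a\in A$, the witnessing argument that $\bar P=d(\cdot,C)$, completeness of $Z$ for extending $F$, and the back-and-forth derivation of universality, ultrahomogeneity and uniqueness) is consistent with the paper's proof.
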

See the third and fourth example.
\begin{proof}
We split the proof into two parts. In order to increase transparency of the proof we separately prove that there is such a universal closed set $C\subseteq \Ur\times K$ and then that there is such a universal $L$-Lipschitz function $F:\Ur\rightarrow Z$. It will be a routine modification to prove that are ''simultaneously" universal and ultrahomogeneous. We will again use Fra\" iss\' e theory.\\

\noindent {\bf \underline{The closed set $C$}}.\\

Let $Q=\{q_n:q\in \Nat\}$ be an enumeration of a countable dense subset of $K$. We define the set $\mathcal{F}\subseteq (\Rat_0^+)^\Nat$ of all suitable functions. A function $f:\Nat\rightarrow \Rat_0^+$ belongs to $\mathcal{F}$ if there is a finite set $F\subseteq \Nat$ and non-negative rationals $r_i\geq 0$ for $i\in F$ such that $f(j)=\max\{0,\max\{r_i-d_K(q_j,q_i):i\in F\}\}$ and it is always the case that $f(i)=r_i$ for every $i\in F$; i.e. $f$ has the domain $\Nat$, however it is uniquely determined only by values on the finite set $F$. For $f\in \mathcal{F}$ we will denote such a finite set as $F_f$ (it is not unique, however there is unique such a set $F_f$ that is minimal in inclusion). Note that $\mathcal{F}$ is countable.

Let $p$ be an unary function with values in the set $\mathcal{F}$. Also, we again consider the binary rational function $d$ for metric. Let $\Lng$ be a language consisting precisely of these functions.

We now define the new class $\Age$ of finite structures of the language $\Lng$.
\begin{defin}
A finite structure $A$ for the language $\Lng$ of cardinality $k>0$ belongs to $\Age$ if the following conditions are satisfied.
\begin{enumerate}
\item $A$ is a finite rational metric space; i.e. it satisfies the same requirements as in the definition \ref{first_age}. 
\item The function $p$ is a total function, i.e. defined on all elements of $A$.

The interpretation of this functions is as follows: if $p(a)(n)=q>0$ then the distance between $(a,q_n)$ and $C$ is at least (in fact precisely) $q$; on the other hand, if $p(a)(n)=0$ then $(a,q_n)\in C$.
\item Here we describe the restriction that we must put on these functions.
\begin{itemize}
\item $\forall a,b\in A\forall n,m\in \Nat (p(a)(n)\leq p(b)(m)+d_K(q_n,q_m)+d(a,b)$
\end{itemize}

This requirement resembles the restriction from the proof of Theorem \ref{main}. The value $p(a)(n)$ determines a rational distance of $(a,q_n)$ in the sum metric from the set $C$. Thus in case that for example $p(a)(n)=0$, i.e. $(a,q_n)\in C$ and $p(b)(m)=q$, i.e. the distance in the sum metric of $(b,q_m)$ from $C$ is at least $q$, then necessarily the distance between $(a,q_n)$ and $(b,q_m)$ is at least $q$, i.e. $d(a,b)+d_K(q_n,q_m)\geq q$.

\end{enumerate}
\end{defin}
We must check that $\Age$ is again countable (up to isomorphism classes), satisfies hereditary, joint embedding and amalgamation property. The first two properties are clear. The verification of the third one is similar as in Theorem \ref{main}, we can just put the structures sufficiently far apart from each other. To check the amalgamation property, suppose we have structures $A,B,C$ such that $A$ is a substructure of both $B$ and $C$. We can again define $D$ with underlying set $A\coprod (B\setminus A)\coprod (C\setminus A)$ with metric extended so that $d(b,c)=\min\{d(b,a)+d(a,c):a\in A\}$ for $b\in B$ and $c\in C$, and $p^{D}(b)=p^{B}(b)$, resp. $p^{D}(c)=p^{C}(c)$, for $b\in B$, resp. $c\in C$ of course. Let us check that this works. Let $b\in B$, $c\in C$ and $n,m\in \Nat$. We check that $p(b)(n)\leq p(c)(m)+d(b,c)+d_K(q_n,q_m)$. Let $a\in A$ be such that $d(b,c)=d(b,a)+d(a,c)$. Then we have $p(b)(n)\leq p(a)(m)+d(b,a)+d_K(q_n,q_m)\leq p(c)(m)+d(a,c)+d(b,a)+d_K(q_n,q_m)=p(c)(m)+d(b,c)+d_K(q_n,q_m)$.

We again denote by $U$ the Fra\" iss\' e limit which is besides other things again a rational Urysohn space. We define the set $C\subseteq \Ur\times K$ in the completion $\Ur$ as follows: $(a,r)\in C \equiv \neg\exists (u,g)\in U\times Q \exists n\in \Nat (g=q_n\wedge d(a,u)+d_K(r,g)<p(u)(n))$. It is obviously closed.

Let us now state and prove the following useful claim that confirms that $p$ is really the distance function from the closed set.
\begin{claim}\label{pisdist}
For any $u\in U$ and $n\in \Nat$ we have $p(u)(n)=d((u,q_n),C)=q$.
\end{claim}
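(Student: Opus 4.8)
The plan is to establish both inequalities $p(u)(n) \leq d((u,q_n),C)$ and $p(u)(n) \geq d((u,q_n),C)$ separately, working in the sum metric on $\Ur \times K$.

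First I would verify the easy inequality $d((u,q_n),C) \geq p(u)(n)$. Suppose $(a,r) \in C$ is arbitrary. By the very definition of $C$, the negated existential statement tells us that for the witness $(u,q_n) \in U \times Q$ we cannot have $d(a,u) + d_K(r,q_n) < p(u)(n)$; hence $d(a,u) + d_K(r,q_n) \geq p(u)(n)$. Since $(a,r) \in C$ was arbitrary, taking the infimum gives $d((u,q_n),C) \geq p(u)(n)$.

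For the reverse inequality $d((u,q_n),C) \leq p(u)(n)$, I need to produce points of $C$ that are close to $(u,q_n)$; in fact it suffices to find, for each $\varepsilon > 0$, a point $(v,g) \in U \times Q$ with $(v,g) \in C$ and $d(u,v) + d_K(q_n,g) \leq p(u)(n) + \varepsilon$. The natural candidate is to take $g = q_n$ (so $d_K$-term vanishes) and to find $v \in U$ with $d(u,v) = p(u)(n)$ (exactly) such that $(v,q_n) \in C$, i.e. such that $p(v)(n) = 0$ and, more generally, $p$ on $v$ is consistent with $v$ being at distance $p(u)(n)$ from $u$ and $(v,q_n)$ lying in $C$. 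The mechanism for this is the rational one-point extension property of $U$: I would build a finite rational extension structure $M = \{u\} \cup \{v\}$ (or rather a structure containing the relevant finite part of $U$ together with a new point $v$), define $d(u,v) := p(u)(n)$ (assuming without loss of generality that this value is rational — otherwise approximate from above within $\varepsilon$), and define $p(v)$ by the canonical formula $p(v)(m) := \max\{0, p(u)(m) - d(u,v)\}$ for the relevant coordinates, in particular forcing $p(v)(n) = 0$. One then checks that the required inequality $\forall m,m'\, (p(v)(m) \leq p(u)(m') + d_K(q_m,q_{m'}) + d(u,v))$ and its symmetric counterpart hold — this is the triangle-inequality bookkeeping analogous to the ``canonical extension'' computation in the proof of Lemma \ref{isfraisse}. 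Since this makes $M$ a legitimate member of $\Age$ extending the relevant finite substructure of $U$, the rational one-point extension property yields an actual point $v \in U$ realizing it. Then $p(v)(n) = 0$ means $(v,q_n) \in C$ (one must check $v$ itself doesn't get excluded from $C$ by some other witness, which follows from the consistency inequalities just verified), and $d(u,v) = p(u)(n)$, giving $d((u,q_n),C) \leq p(u)(n)$.

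The main obstacle I expect is the verification in the previous paragraph that the one-point extension $M$ really lies in $\Age$, i.e. that defining $p(v)$ by the canonical max-formula is consistent with the defining restriction of $\Age$ relating $p(v)(m)$, $p(u)(m')$, $d_K(q_m,q_{m'})$ and $d(u,v)$ simultaneously — and, a slightly subtle point, that the element $v$ produced by the extension property is not accidentally removed from $C$ by the defining condition of $C$ via some other pair $(w,q_l) \in U \times Q$; this amounts to checking $d(v,w) + d_K(q_n,q_l) \geq p(w)(l)$ for all such $w,l$, which should follow by chaining the $\Age$-restriction inequality through $u$. A further minor technical point is the reduction to the case $p(u)(n) \in \Rat$; if it is irrational one runs the argument with a rational $r$ slightly above $p(u)(n)$ and lets $r \to p(u)(n)$, using that $C$ is closed to pass to the limit, so no real extra work is needed. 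Once the claim is proved, it confirms that $p$ is genuinely the distance-to-$C$ function, which is exactly what is needed to run the universality and ultrahomogeneity arguments for $(\Ur, C)$ in the style of Section 1.
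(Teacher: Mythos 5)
Your proposal is correct and follows essentially the same route as the paper: the lower bound is read off directly from the definition of $C$, and the upper bound is obtained by using the rational one-point extension property to realize a point $v\in U$ at (rational) distance roughly $p(u)(n)$ from $u$ whose $\mathcal{F}$-function is $p(u)$ shifted down by that distance, so that $p(v)(n)=0$ and hence $(v,q_n)\in C$. The paper's proof is the same construction with $d(u,v)=d_\varepsilon\in[q,q+\varepsilon)$ and $f(i)=p(u)(i)-d_\varepsilon+q_\varepsilon$ on the indices where this stays positive, and your extra care about $v$ not being excluded from $C$ by another witness $(w,q_l)$ is settled at once by the $\Age$-restriction applied to the pair $\{v,w\}\subseteq U$.
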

\begin{proof}[Proof of the claim.]
We prove that for an arbitrary $\varepsilon>0$ there exists $v\in U$ such that $d(u,v)<q+\varepsilon$ and $p(v)(n)=0$. It follows that $q\leq d(u,q_n),C)\leq q+\varepsilon$ for an arbitrary $\varepsilon>0$, thus $p(u)(n)=d((u,q_n),C)=q$. So let $\varepsilon>0$ be given. Let $d_\varepsilon\in \Rat^+$ be an arbitrary positive rational number so that $q\leq d_\varepsilon<q+\varepsilon$. Moreover, let $q_\varepsilon\in \Rat^+_0$ be an arbitrary nonnegative rational number smaller or equal to $d_q-q$. Let $F=\{i\in F_{p(u)}:p(u)(i)>d_\varepsilon\}$. We define $f\in \mathcal{F}$ such that $F_f=F$, and for $i\in F$ we set $f(i)=p(u)(i)-d_q+q_\varepsilon$. We define a one-point extension of $\{u\}$ as follows: the underlying set is $\{u,v\}$, we set $d(u,v)=d_\varepsilon$ and $p(v)=f$. We claim it belongs to $\Age$. Then by one-point extension property we can find such $v$ in $U$ and it is as desired: we need to prove that $p(v)(n)=0$. Suppose not, then there is $i\in F_f$ such that $p(v)(i)-d_K(q_i,q_n)>0$. However, since $p(u)(i)=p(v)(i)-q_\varepsilon+d_q$ we would have $p(u)(n)\geq p(u)(i)-d_K(q_i,q_n)=p(v)(i)-q_\varepsilon+d_q-d_K(q_i,q_n)>q$, a contradiction.

It remains to prove that $\{u,v\}\in \Age$. Let $n,m\in \Nat$ be given. We prove that $p(u)(n)\leq p(v)(m)+d(u,v)+d_K(q_n,q_m)$. If $p(u)(n)\leq d(u,v)$ then it is clear, so let us suppose that $p(u)(n)>d(u,v)$ and let $i\in F$ be such that $p(u)(n)=p(u)(i)-d_K(q_n,q_i)$. Then $p(v)(m)\geq p(v)(i)-d_K(q_m,q_i)\geq p(u)(i)-d(u,v)+q_\varepsilon-d_K(q_n,q_i)-d_K(q_n,q_m)$. It follows that $p(v)(m)+d(u,v)+d_K(q_n,q_m)\geq p(u)(i)-d_K(q_n,q_i)+q_\varepsilon= p(u)(n)+q_\varepsilon$.

Now we prove that also $p(v)(m)\leq p(u)(n)+d(u,v)+d_K(q_n,q_m)$. If $p(v)(m)=0$ then it is trivial, so let us suppose that $p(v)(m)>0$ and let $i\in F$ be such that $p(v)(m)=p(v)(i)-d_K(q_m,q_i)=p(u)(i)-d(u,v)+q_\varepsilon-d_K(q_m,q_i)$. Then $p(u)(n)\geq p(u)(i)-d_K(q_i,q_m)-d_K(q_n,q_m)$, thus $p(u)(n)+d(u,v)+d_K(q_n,q_m)\geq p(u)(i)-d_K(q_i,q_m)+d(u,v)\geq p(u)(i)-d(u,v)+q_\varepsilon-d_K(q_m,q_i)$ and we are done. Note that the last inequality follows from $d(u,v)\geq -d(u,v)+q_\varepsilon$ which is immediate from the definition of $d(u,v)$ and $q_\varepsilon$.
\end{proof}
\subsection{The one-point extension property for $(\Ur,C)$}
Let $\bar{\Age}$ be again the ``real" variant of $\Age$, i.e. a structure $A$ belongs to $\bar{\Age}$ if it is a finite metric space equipped with a closed subset $C_A$ of $A\times Z$, where $C_A$ need not to be finite. For each $n\in \Nat$ and $a\in A$ we denote by $p(a)(n)$ the distance of $(a,q_n)$ from $C_A$; $p(a)(n)$ in this case need not to be rational. The notions of embedding and isomorphism are obvious.

We again prove the one-point extension property for $\bar{\Age}$ which simplifies the proofs of universality, ultrahomogeneity and uniqueness of $(\Ur,C)$. By ``rational one-point extension property" we again mean the one-point extension property for structures from $\Age$.
\begin{prop}[The one-point extension property]\label{second_OPE}
Let $(A,C_A)$ be a finite substructure of $(\Ur,C)$ and let $(B,C_B)\in \bar{\Age}$ be a one-point extension, i.e. $|B|=|A|+1$ and there is an embedding $\phi:A\hookrightarrow B$. Then there exists an embedding $\psi:(B,C_B)\hookrightarrow (\Ur,C)$ such that $\mathrm{id}=\psi\circ \phi$.
\end{prop}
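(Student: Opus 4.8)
\textbf{Proof proposal for Proposition \ref{second_OPE}.}

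The plan is to mimic the proof of Proposition \ref{main_OPE}: realize the new point $b$ of $B$ as the limit of a rational Cauchy sequence $(u^j)_j \subseteq U$ obtained by iterated applications of the rational one-point extension property, while carefully controlling the distance-to-$C$ function $p$ at each stage. First I would dispose of the base case where $A = \emptyset$ and $B = \{b\}$: here the data is just the function $g = p(b) \colon \Nat \to \Rat_0^+$ (or its real analogue in $\bar{\Age}$), and one builds a sequence $(u^j)_j$ with $d_\Ur(u^j, u^{j+1}) = 1/2^{j+1}$ and $p(u^j)$ a rational function in $\mathcal{F}$ approximating $g$ from above within $1/2^j$; passing to the limit and invoking Claim \ref{pisdist} shows $p(\lim_j u^j) = g$ as required.

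For the inductive step, enumerate $A = \{a_1, \dots, a_{k-1}\}$ and $B = \{a_1, \dots, a_{k-1}, b\}$ via $\phi$. For each $i < k$ choose a rational Cauchy sequence $(u_i^j)_j \subseteq U$ with $\lim_j u_i^j = a_i$ and rapidly decreasing tails. At stage $l$ I would use Lemma \ref{metriclem} (which is purely metric and directly applicable, with $M$ the metric space $B$) to produce a rational metric one-point extension $M_l$ of a suitable finite slice $\{u_1^{k+l+\cdots}, \dots, u_{k-1}^{k+l+\cdots}, u_k^{l-1}\}$ of $U$, adding a candidate point $g$ at controlled distances $d(b, a_i) \le d(u_i^{\cdots}, g) \le d(b, a_i) + 1/2^l$ and $d(g, u_k^{l-1}) = 1/2^l$. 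Then I would extend the structure to $\bar{\Age}$ by defining $p(g) \in \mathcal{F}$: for each $n$, set $p(g)(n)$ to be a rational in the interval between $p(b)(n)$ and $p(b)(n) + (\text{small error depending on } l, \text{ the metric distortion, and } n)$, clipped to respect the triangle-type inequality $p(g)(n) \le p(x)(m) + d_K(q_n,q_m) + d(g,x)$ against all previously assigned values, exactly as the $m_j^l, M_j^l$ clipping was done in Proposition \ref{main_OPE}. The key point is that one must check this clipped assignment still lies in $\mathcal{F}$ (is determined by finitely many values) and is consistent; consistency follows as in Lemma \ref{isfraisse}'s amalgamation computation, and membership in $\mathcal{F}$ follows because we are taking a pointwise max of finitely many shifted "cones" coming from $F_{p(x)}$'s. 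Apply the rational one-point extension property to get $u_k^l \in U$ realizing $g$.

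Finally, the sequence $(u_k^j)_j$ is Cauchy (consecutive distances $1/2^{j+1}$), so it has a limit $a_k \in \Ur$; set $\psi(a_i) = a_i$ for $i < k$ and $\psi(b) = a_k$. By the metric control from Lemma \ref{metriclem}, $d_\Ur(a_i, a_k) = d(a_i, b)$, so $\psi$ is an isometric embedding. The analogue of Claim \ref{correct_estim_main} — that $p(u_k^l)(n)$ stays within a geometrically shrinking error of the intended value $p(b)(n)$, proved by induction on $l$ using the three-case analysis (value came from an old point of $S_l$, from a clipped $r$-value, or from the previous stage's point $u_k^{l-1}$ at distance $1/2^l$) — then gives $p(a_k)(n) = \lim_l p(u_k^l)(n) = p(b)(n)$, invoking Claim \ref{pisdist} for the limit point. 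The same error-tracking shows $p(\psi^1(a_i)) = p(b_i)$ is preserved for the old points too, so $\psi$ reduces $C_B$ into $C$. I expect the main obstacle to be exactly the bookkeeping in defining $p(g)$: one must simultaneously (i) keep it in the countable set $\mathcal{F}$, (ii) respect the mixed metric on $A^n \times K$-type inequality involving $d_K$, and (iii) keep the error controlled well enough that the inductive estimate closes — reconciling (i) with (ii) is the delicate new ingredient not present in Proposition \ref{main_OPE}, since there the analogous functions had no $d_K$-shifted cone structure to preserve.
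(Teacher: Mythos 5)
Your overall strategy matches the paper's: build the new point as the limit of a rational Cauchy sequence obtained by iterating the rational one-point extension property, use Lemma \ref{metriclem} for the metric part, clip the prescribed values against already-defined ones for consistency, and close an inductive error estimate analogous to Claim \ref{correct_estim_main}. However, there is a genuine gap at exactly the point you flag as ``the main obstacle'' and then leave unresolved: the definition of $p(g)\in\mathcal{F}$. You propose to ``set $p(g)(n)$ to be a rational in the interval between $p(b)(n)$ and $p(b)(n)+(\text{small error})$'' \emph{for each $n$}, clipped against previously assigned values. But a function in $\mathcal{F}$ is by definition generated by its values on a \emph{finite} set $F_f$ via the cone formula $f(j)=\max\{0,\max\{r_i-d_K(q_j,q_i):i\in F_f\}\}$; you cannot make an independent choice at every $n\in\Nat$ and remain in $\mathcal{F}$, and asserting that the result ``is a pointwise max of finitely many shifted cones'' does not follow from the construction as you describe it.

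The missing idea is the use of compactness of $K$. The paper takes a finite $1/2^{l+2}$-net $N\subseteq Q$ (this is precisely where total boundedness of $K$ enters, and why the theorem is stated for compact $K$ only), lets $F$ be the indices of $N$ together with the finite supports $F_{p(u_i^{k+l+2})}$ of the neighbouring points, prescribes and clips values only at indices in $F$, and relies on the $1$-Lipschitzness of $n\mapsto p(\cdot)(n)$ in the $K$-variable to propagate the error bound from the net to all of $\Nat$. A second point you omit: after clipping one must still verify that the cone-generated function actually attains the prescribed values on $F$ (the condition $f(i)=r_i$ for $i\in F_f$ in the definition of $\mathcal{F}$); the paper arranges this by processing the indices of $F$ in decreasing order of the target values $\gamma_i$ and replacing $\gamma_{i_n}$ by $\eta_{i_n}=\max\{\gamma_i-d_K(q_i,q_{i_n})\}$ whenever an earlier, larger value's cone overrides it. Both of these steps are the genuinely new content of Proposition \ref{second_OPE} relative to Proposition \ref{main_OPE}, and neither is supplied by your proposal; the rest (the metric lemma, the Cauchy sequence, the three-case inductive estimate, and the final appeal to Claim \ref{pisdist}) is correctly outlined.
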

Before we provide the proof we again begin by showing how universality, ultrahomogeneity and uniqueness follow.
\begin{prop}
The Polish metric structure $(\Ur,C)$ is universal.
\end{prop}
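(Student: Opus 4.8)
The plan is to deduce universality of $(\Ur,C)$ from the one-point extension property (Proposition \ref{second_OPE}) by the same scheme as was used for Theorem \ref{main}. First I would fix an arbitrary Polish metric structure $(X,d,C_X)$ of the same signature, i.e. $X$ a Polish (separable suffices) metric space and $C_X\subseteq X\times K$ a closed set. I would pick a countable set $D=\{d_1,d_2,\ldots\}\subseteq X$ which is dense in $X$ and moreover has the property that for every $n\in\Nat$ the distance function $x\mapsto d((x,q_n),C_X)$ on $D$ determines the same function on all of $X$ by the $1$-Lipschitz (in fact $1$-Lipschitz in both coordinates jointly) extension; density of $D$ suffices for this because $x\mapsto d((x,q_n),C_X)$ is continuous and $Q=\{q_n\}$ is dense in $K$, so the whole distance function $(x,k)\mapsto d((x,k),C_X)$ is recovered from its restriction to $D\times Q$.

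Next I would enumerate $D$ and build an isometric copy $D'=\{u_1,u_2,\ldots\}\subseteq\Ur$ by induction, at each stage applying Proposition \ref{second_OPE}: having placed $u_1,\ldots,u_{k-1}$, I form the substructure $A_k\subseteq(\Ur,C)$ on these points (with its induced $p$-functions, which by Claim \ref{pisdist} really are the distance functions from $C$), form the one-point extension $B_k\in\bar\Age$ on $\{d_1,\ldots,d_k\}$ with $p^{B_k}(d_i)(n)=d((d_i,q_n),C_X)$, note the obvious embedding $A_k\hookrightarrow B_k$, and obtain $u_k\in\Ur$ extending the partial isometry and preserving all the $p$-values. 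The construction guarantees $d_\Ur(u_i,u_j)=d_X(d_i,d_j)$ and $d((u_i,q_n),C)=d((d_i,q_n),C_X)$ for all $i,j,n$.

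Then I would extend the map $d_i\mapsto u_i$ to the completion: since it is an isometry on the dense set $D$ it extends uniquely to an isometric embedding $\psi\colon X\hookrightarrow\Ur$. It remains to check $\psi$ reduces $C_X$ into $C$, i.e. $(x,k)\in C_X\Leftrightarrow(\psi(x),k)\in C$ for all $x\in X$, $k\in K$. For this I would use exactly the continuity/density argument sketched in the proof of the corresponding claim for Theorem \ref{main}: if $(x,k)\in C_X$, approximate $(x,k)$ by points $(d_{i_j},q_{n_j})$ with $d((d_{i_j},q_{n_j}),C_X)\to 0$, hence $d((u_{i_j},q_{n_j}),C)\to 0$, and since $C$ is closed $(\psi(x),k)\in C$; if $d((x,k),C_X)=\varepsilon>0$, pick $(d_i,q_n)$ within $\varepsilon/3$ of $(x,k)$ in the sum metric, so $d((d_i,q_n),C_X)>2\varepsilon/3$, whence $d((u_i,q_n),C)>2\varepsilon/3$ and therefore $d((\psi(x),k),C)>\varepsilon/3>0$, so $(\psi(x),k)\notin C$.

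The genuine content is entirely in Proposition \ref{second_OPE}, which is assumed here; the only mild subtlety in the present argument is making sure the chosen countable dense $D\subseteq X$ indeed recovers $C_X$ from $D\times Q$, which follows from joint continuity of $(x,k)\mapsto d((x,k),C_X)$ together with density of $D$ in $X$ and of $Q$ in $K$ — so I do not expect any real obstacle beyond bookkeeping. The remaining statements (ultrahomogeneity and uniqueness of $(\Ur,C)$) would then follow, as in Section 1, by a back-and-forth argument using Proposition \ref{second_OPE}.
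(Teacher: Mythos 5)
Your proposal is correct and follows essentially the same route as the paper: choose a countable dense $D\subseteq X$, build an isometric copy in $\Ur$ by iterating Proposition \ref{second_OPE} so that the distances to $C$ at the points $(u_i,q_n)$ match the distances to $C_X$ at $(d_i,q_n)$, extend to the completion, and conclude by the closedness of $C$ together with the $\varepsilon/3$ density argument. The paper leaves the inductive construction of $D'$ implicit ("a series of applications of Proposition \ref{second_OPE}"), which you have simply spelled out.
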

\begin{proof}
Let $(X,d)$ be a Polish metric space and $B\subseteq X\times K$ a closed set. Let again $D=\{d_n:n\in \Nat\}\subseteq X$ be a countable dense set. We will find an isometric copy $D'$ of $D$ in $\Ur$ such that for any $d_n\in D$ and $q_m\in Q$ we have $d((d_n,q_m),B)=d((d'_n,q_m),C)$. This suffices. We can then extend the isometry, let us call it $\phi$, to the closure of $D$ which is the whole space $X$. Let $(x,r)\in X\times K$ be arbitrary. Assume at first that $(x,r)\notin B$. Let $\varepsilon=d((x,r),B)$. Then there exist $i,n\in \Nat$ such that $d((d_i,q_n),B)\geq 2\varepsilon/3$ and $d((d_i,q_n),(x,r))<\varepsilon/3$, thus $d((d'_i,q_n),C)\geq 2\varepsilon/3$, so $(\phi(x),r)\notin C$. On the other hand, assume that $(x,r)\in B$. Then there exists a sequence $(d_n,q_n)_n\subseteq D\times Q$ such that $(d_n,q_n)\to (x,r)$ and $(d(d_n,q_n),B)\to 0$. Thus also $(d'_n,q_n)\to (\phi(x),r)$ and since $d((d'_n,q_n),C)\to 0$ we have $d((\phi(x),r),C)=0$.

The construction of $D'$ is again just a series of applications of Proposition \ref{second_OPE}.
\end{proof}
\begin{claim}
The structure $(\Ur,C)$ is ultrahomogeneous and a unique structure having this kind of one-point extension property.
\end{claim}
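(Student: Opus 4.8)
The plan is to derive both statements from the one-point extension property established in Proposition~\ref{second_OPE}, by the standard back-and-forth method, exactly in the spirit of the uniqueness claim of Section~1. In both cases the combinatorial skeleton is routine once Proposition~\ref{second_OPE} is available; the only genuinely substantive point is checking that the isometry obtained in the limit, after passing to the completion, really carries the closed set $C$ to itself (resp.\ to $C'$).

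For ultrahomogeneity, let $(\phi)$ be an isomorphism between two finite substructures $A_1,A_2$ of $(\Ur,C)$, so $\phi$ is an isometry with $p(a)(n)=p(\phi(a))(n)$ for all $a\in A_1$, $n\in\Nat$. Fix a countable dense subset $D=\{d_n:n\in\Nat\}$ of $\Ur$. I would construct an increasing chain $\phi=\phi_0\subseteq\phi_1\subseteq\phi_2\subseteq\cdots$ of finite partial isomorphisms of $(\Ur,C)$ such that $d_n$ is driven into the domain of $\phi_{2n+1}$ and into the range of $\phi_{2n+2}$. To put $d_n$ into the domain of $\phi_k$: view the finite set $A_k\cup\{d_n\}$ as a substructure of $(\Ur,C)$ with the inherited data $p(a)(m)=d((a,q_m),C)$ (an element of $\bar\Age$ by Claim~\ref{pisdist}), hence via $\phi_k$ as a one-point extension of $\phi_k(A_k)\subseteq(\Ur,C)$; Proposition~\ref{second_OPE} supplies an embedding of this one-point extension into $(\Ur,C)$ fixing $\phi_k(A_k)$, and its value on the new point is where $\phi_{k+1}$ sends $d_n$. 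The ``back'' steps are symmetric, applied to $\phi_k^{-1}$. Then $\phi_\infty=\bigcup_k\phi_k$ is an isometry between dense subsets of $\Ur$ preserving all the $p$-functions, so it extends uniquely to an isometry $\bar\phi$ of $\Ur$.

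The heart of the argument is to verify that $(\bar\phi\times\mathrm{id}_K)(C)=C$, and this is the step I expect to be the main obstacle. The function $(x,r)\mapsto d((x,r),C)$ is $1$-Lipschitz on $\Ur\times K$, the map $\bar\phi\times\mathrm{id}_K$ is an isometry of $\Ur\times K$, and by construction $d((d,q_n),C)=d((\bar\phi(d),q_n),C)$ for all $d$ in a dense subset of $\Ur$ and all $n$; since the points $(d,q_n)$ are dense in $\Ur\times K$, continuity propagates the equality to all of $\Ur\times K$, and passing to zero sets gives $(x,r)\in C\Leftrightarrow(\bar\phi(x),r)\in C$. What makes this work is precisely Claim~\ref{pisdist}, which guarantees that the finite traces $p(a)(n)$ genuinely compute distances to $C$, so that agreement on a dense set is enough; I would emphasize this is where the completion-passing could have gone wrong.

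For uniqueness, suppose $(\Ur',C')$ is any Polish metric structure of the same signature enjoying the one-point extension property of Proposition~\ref{second_OPE}; by the preceding propositions this already forces the underlying space to be $\Ur$ and forces universality and ultrahomogeneity, and conversely universality together with ultrahomogeneity yields the one-point extension property (lift an embedding of the one-point extension via universality, then correct by an automorphism fixing the base). Running the analogous back-and-forth between $(\Ur,C)$ and $(\Ur',C')$ — alternately invoking Proposition~\ref{second_OPE} on each side along countable dense subsets — produces an isometry between dense subsets that preserves the $p$-functions; its continuous extension is an isometry $\Ur\to\Ur'$ carrying $C$ onto $C'$ by the same zero-set argument as above, so $(\Ur,C)\cong(\Ur',C')$.
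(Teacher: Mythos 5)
Your proposal is correct and follows essentially the same route the paper intends: the paper's own justification is simply that the proofs are ``completely analogous to those in the first section,'' i.e.\ a back-and-forth application of Proposition~\ref{second_OPE} along countable dense sets, followed by extension to the completion and the $1$-Lipschitz/zero-set argument (via Claim~\ref{pisdist}) to see that $C$ is carried to $C$ (resp.\ $C'$). Your write-up supplies exactly the details the paper omits, with no substantive deviation.
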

Proofs are completely analogous to those in the first section.\\

\noindent\emph{Proof of Proposition \ref{second_OPE}.} 
Let us again at first treat the case when $A$ is empty and $B=\{b_1\}$. We just need to find some $a_1\in \Ur$ such that for every $n\in \Nat$ we have $p(a_1)(n)=p(b_1)(n)$. For every $l\in \Nat$ we define $f_l\in \mathcal{F}$ such that for every $n$ we shall have $|f_l(n)-p(b_1)(n)|<1/2^l$.

For every $n\in \Nat$ let $\varepsilon_n=d((b_k,q_n),C_B)$ ($=p(b_k)(n)$). Let $N\subseteq Q$ be a $1/2^{l+2}$-net in $K$, i.e. $\forall y\in K\exists x\in N (d_K(y,x)<1/2^{l+2})$. $N$ can be supposed to be finite since $K$ is totally bounded (this is the place where we need $K$ to be compact). Let $F$ be the set of indices of elements from $N$, i.e. $N=\{q_i:i\in F\}$. For every $i\in F$ let $\gamma_i^l\in \Rat_0^+$ be any non-negative rational number such that $0\leq \gamma_i^l-\varepsilon_i<1/2^{l+2}$. 

We define $f_l\in \mathcal{F}$. It suffices to define $f_l$ on a finite set $F$. Let  $F$ be equal to the set $\{i_1,\ldots,i_m\}$. WLOG we assume that $\gamma_{i_j}^l\geq \gamma_{i_l}^l$ for $j\leq l\leq m$.

We define $f_l$ inductively as follows: at step $1$ we set $f_l(i_1)=\gamma_{i_1}^l$. Suppose we are at step $n\leq m$. If $\eta_{i_n}^l=\max\{\gamma_i^l-d_K(q_i,q_{i_n}):i\in \{i_1,\ldots,i_{n-1}\}\}>\gamma_{i_n}^l$ then we set $f_l(i_n)=\eta_{i_n}^l$; otherwise, we set $f_l(i_n)=\gamma_{i_n}^l$. If we have finished then we have defined $f_l$ on $F$ ($=F_{f_l}$) which uniquely determines the values of $f_l$ on $\Nat$. We now check that for every $l,n\in \Nat$ we have $|f_l(n)-p(b_1)(n)|<1/2^l$. Let $n\in \Nat$ be arbitrary. There exists $i\in F$ such that $d_K(q_i,q_n)<1/2^{l+2}$. Since it follows $|\varepsilon_i-\varepsilon_n|<1/2^{l+2}$ and $|p(u_1^l)(n)-p(u_1^l)(i)|<1/2^{l+2}$ (the functions $q_i\rightarrow \varepsilon_i$ and $q_i\rightarrow p(u_1^l)(i)$ are $1$-Lipschitz) it suffices to check that for any $n\in F_{f_l}$ we have $|p(u_1^l)(n)-\varepsilon_n|\leq 1/2^{l+1}$. For $n\in F_{f_l}$ we either have that $p(u_1^l)(n)=\gamma_n^l$ or that $p(u_1^l)(n)=\eta_n^l$. If the former case holds then it is clear from the choose of $\gamma_n^l$. If $p(u_1^l)(n)=\eta_n^l$ then from the definition of $\eta_n^l$ we have $\eta_n^l>\gamma_n^l$ and there exists $i\in F$ such that $p(u_1^l)(i)=\gamma_i^l$ and $p(u_1^l)(n)=\eta_n^l=p(u_1^l)(i)-d_K(q_i,q_n)$. Since $\eta_n^l>\gamma_n^l\geq \varepsilon_n-1/2^{l+1}$ it suffices to check that $\eta_n^l\leq \varepsilon_n+1/2^{l+1}$. However since $\varepsilon_i\leq \varepsilon_n+d_K(q_i,q_n)$ and $|\gamma_i^l-\varepsilon_i|\leq 1/2^{l+1}$ this follows.

Now we use the rational one-point extension property to define a sequence $(u_1^j)_j\subseteq U$ such that for every $j\in \Nat$ we have $p(u_1^j)=f_j$ and $d_\Ur(u_1^j,u_1^{j+1})=1/2^{j+1}$. It is straightforward to check that this is possible and since for every $j,n\in \Nat$ we have $|p(u_1^l)(n)-p(b_1)(n)|<1/2^l$ it follows that $p(a_1)(n)=p(b_1)(n)$, for every $n\in \Nat$, where $a_1=\lim_l u_1^l$.\\
 
We now assume that $A$ is non-empty. Let us enumerate $A$ as $\{a_1,\ldots,a_{k-1}\}$ and $B$ as $\{b_1,\ldots,b_k\}$ in such a way that for every $i< k$ we have $\phi(a_i)=b_i$. We shall find a new point $a_k\in \Ur$ such that the structures $A\cup\{a_k\}$ and $B$ will be isomorphic. For every $i< k$ let $(u_i^j)_j\subseteq U$ be a sequence from the rational space $U$ converging to $a_i$ such that $d_\Ur(u_i^ j,a_i)<1/2^i$, for every $n\in \Nat$ $|p(a_i)(n)-p(u_i^j)(n)|<1/2^{j+1}$ and for any pair $j<l$ we have $d_\Ur(u_i^j,a_i)>d_\Ur(u_i^l,a_i)$. We shall find a new sequence from $U$ converging to the desired point $a_k$. This is done by induction.

Consider a structure $S_1=\{u_1^{k+3},\ldots,u_{k-1}^{k+3}\}$ such that for every $i< k$ and $n\in \Nat$ we have $p(u_i^{k+3})(n)=d((u_i^{k+3},q_n),C)$. Thus $S_1\in \Age$. We use Lemma \ref{metriclem} to define a metric one-point extension $M_1=\{u_1^{k+3},\ldots,u_{k-1}^{k+3},g\}$ such that for all $i< k$ we have $d(b_i,b_k)\leq d_\Ur(u_i^{k+3},g)\leq d(b_i,b_k)+1/2$. We define a structure $V_1$ such that $M_1$ is its underlying (rational) metric space. We need to define $p$ on $g$. This will be similar to the definition of $p$ on $u_1^l$'s (from case when $A$ was empty) but more complicated.

For every $n\in \Nat$ let $\varepsilon_n=d((b_k,q_n),C_B)$ ($=p(b_k)(n)$). 
Let $N\subseteq Q$ be a $1/2^3$-net in $K$, i.e. $\forall y\in K\exists x\in N (d_K(y,x)<1/2^3)$. $N$ can be supposed to be finite since $K$ is totally bounded. Let $F'$ be the set of indices of elements from $N$, i.e. $N=\{q_i:i\in F'\}$. We set $F=F'\cup \bigcup _{i< k} F_{p(u_i^{k+3})}$. For every $i\in F$ let $\delta_i^1\in \Rat_0^+$ be any non-negative rational number such that $0\leq \delta_i^1-\varepsilon_i<1/2^2$. Also, we define $m_i^1$ to be $\max\{p(u_j^{k+3})(i)-d_\Ur(g,u_j^{k+3}):j< k\}$ and $M_i^1$ to be  $\min\{p(u_j^{k+3})(i)+d_\Ur(g,u_j^{k+3}):j< k\}$. For every $i\in F$ if $m_i^1\leq \delta_i^1\leq M_i^1$ then we set $\gamma_i^1=\delta_i^1$. If $\delta_i^1<m_i^1$, resp. $M_i^1<\delta_i^1$ then we set $\gamma_i^1=m_i^1$, resp. $\gamma_i^1=M_i^1$. 

We define $f\in \mathcal{F}$. It suffices to define $f$ on a finite set $F$. Let  $F$ be equal to the set $\{i_1,\ldots,i_m\}$. WLOG we assume that $\gamma_{i_j}^1\geq \gamma_{i_l}^1$ for $j\leq l\leq m$.

We define $f$ inductively as follows: at step $1$ we set $f(i_1)=\gamma_{i_1}^1$. Suppose we are at step $n\leq m$. If $\eta_{i_n}^1=\max\{\gamma_i^1-d_K(q_i,q_{i_n}):i\in \{i_1,\ldots,i_{n-1}\}\}>\gamma_{i_n}^1$ then we set $f(i_n)=\eta_{i_n}^1$; otherwise, we set $f(i_n)=\gamma_{i_n}^1$. If we have finished then we have defined $f$ on $F$ ($=F_f$) which uniquely determines the values of $f$ on $\Nat$. 

We now put $p(g)=f$. It is straightforward to check it is consistent. We defined an extension $V_1\in \Age$ of $S_1$ and thus there is some $u_k^1\in U$ playing the role of $g$.\\

Suppose we have already constructed $u_k^1,\ldots,u_k^{l-1}\subseteq U$ such that $d_\Ur(u_k^i,u_k^{i+1})=1/2^{i+1}$ for any $i<l-1$. Consider a structure $S_l=\{u_1^{k+l+2},\ldots,u_{k-1}^{k+l+2},u_k^{l-1}\}$ with $p(u)(n)=d((u,q_n),C)$ for every $u\in S_l$ and $n\in \Nat$. We use Lemma \ref{metriclem} to obtain a metric extension $M_l=\{u_1^{k+l+2},\ldots,u_{k-1}^{k+l+2},u_k^{l-1},g\}$ such that such that for all $i< k$ we have $d(b_i,b_k)\leq d_\Ur(u_i^{k+l+2},g)\leq d(b_i,b_k)+1/2^l$. We need to define $p$ on $g$. This is done in the same way as in the first induction step: For every $n\in \Nat$ let $\varepsilon_n=d((b_k,q_n),C_B)$ ($=p(b_k)(n)$). 
Let $N\subseteq Q$ be a $1/2^{l+2}$-net in $K$, i.e. $\forall y\in K\exists x\in N (d_K(y,x)<1/2^{l+2})$. $N$ can be supposed to be finite since $K$ is totally bounded. Let $F'$ be the set of indices of elements from $N$, i.e. $N=\{q_i:i\in F'\}$. We set $F=F'\cup \bigcup _{i< k} F_{p(u_i^{k+l+2})}$. For every $i\in F$ let $\delta_i^l\in \Rat_0^+$ be any non-negative rational number such that $0\leq \delta_i^l-\varepsilon_i<1/2^{l+2}$. Also, we define $m_i^l$ to be $\max\{p(u)(i)-d_\Ur(g,u):u\in \{u_j^{k+l+2}:j< k\}\cup\{g\}\}$ and $M_i^l$ to be  $\min\{p(u)(i)+d_\Ur(g,u):u\in \{u_j^{k+l+2}:j< k\}\cup\{g\}\}$. For every $i\in F$ if $m_i^l\leq \delta_i^l\leq M_i^l$ then we set $\gamma_i^l=\delta_i^l$. If $\delta_i^l<m_i^l$, resp. $M_i^l<\delta_i^l$ then we set $\gamma_i^l=m_i^l$, resp. $\gamma_i^l=M_i^l$. 

We define $f\in \mathcal{F}$. It suffices to define $f$ on a finite set $F$. Let  $F$ be equal to the set $\{i_1,\ldots,i_m\}$. WLOG we assume that $\gamma_{i_j}^l\geq \gamma_{i_l}^l$ for $j\leq l\leq m$.

We define $f$ inductively as follows: at step $1$ we set $f(i_1)=\gamma_{i_1}^l$. Suppose we are at step $n\leq m$. If $\eta_{i_n}^l=\max\{\gamma_i^l-d_K(q_i,q_{i_n}):i\in \{i_1,\ldots,i_{n-1}\}\}>\gamma_{i_n}^l$ then we set $f(i_n)=\eta_{i_n}^l$; otherwise, we set $f(i_n)=\gamma_{i_n}^l$. If we have finished then we have defined $f$ on $F$ ($=F_f$) which uniquely determines the values of $f$ on $\Nat$. 

We now put $p(g)=f$. It is straightforward to check it is consistent. We defined an extension $V_l\in \Age$ of $S_l$ and thus there is some $u_k^l\in U$ playing the role of $g$.

Assume the induction is finished. We have produced a sequence $(u_k^j)_j\subseteq U$ such that for every $i\in \Nat$ we have $d_\Ur(u_k^i,u_k^{i+1})=1/2^{i+1}$ thus the sequence is Cauchy and we denote $a_k$ its limit point. It immediately follows from the construction that $d_\Ur(a_i,a_k)=d(b_i,b_k)$ for every $i< k$. It remains to check that for every $y\in K$ $d((a_k,y),C)=d((b_k,y),C_B)$. It obviously suffices to check that for every $n\in \Nat$ $d((a_k,q_n),C)=d((b_k,q_n),C_B)$.

\begin{claim}
Let $l,n\in \Nat$ be arbitrary. Then $|p(u_k^l)(n)-\varepsilon_n|\leq 1/2^l$.
\end{claim}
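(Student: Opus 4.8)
The plan is to prove the claim by induction on $l$, following the pattern of the proof of Claim~\ref{correct_estim_main}. The function $p(u_k^l)$ is the element $f\in\mathcal{F}$ built at the $l$-th inductive step, and unwinding its definition (processing $F$ in order of decreasing $\gamma_j^l$, and replacing a provisional value by the larger quantity $\eta$ exactly when forced) shows that $f$ is just the $\mathcal{F}$-function determined by the data $(F,(\gamma_j^l)_{j\in F})$, that is,
\[
p(u_k^l)(n)=\max\Bigl\{0,\ \max_{j\in F}\bigl(\gamma_j^l-d_K(q_j,q_n)\bigr)\Bigr\}\qquad(n\in\Nat).
\]
Hence it suffices to estimate $|\gamma_j^l-\varepsilon_j|$ for $j\in F$ and then to propagate the estimate to an arbitrary $n$, using that both $p(u_k^l)$ (being in $\mathcal{F}$) and the map $n\mapsto\varepsilon_n=d((b_k,q_n),C_B)$ are $1$-Lipschitz in $d_K$, and that $F$ contains the index set of a finite $d_K$-net of the compact space $K$.

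For the estimate on $\gamma_j^l$ recall that $\gamma_j^l$ is the rational $\delta_j^l$, which satisfies $0\le\delta_j^l-\varepsilon_j<1/2^{l+2}$, truncated to the interval $[m_j^l,M_j^l]$; so I would bound $m_j^l$ from above and $M_j^l$ from below, in each case essentially by $\varepsilon_j$. Each defining term of $m_j^l$ and of $M_j^l$ comes either from some $u_{j'}^{k+l+2}$ with $j'<k$, or from the previous approximant $u_k^{l-1}$ (present when $l>1$). In the first case one uses $|p(u_{j'}^{k+l+2})(j)-p(a_{j'})(j)|<1/2^{k+l+3}$, the identity $p(a_{j'})(j)=p(b_{j'})(j)$ (the embedding of $A$ into $B$ preserves $p$), the bound $d(b_{j'},b_k)\le d_\Ur(u_{j'}^{k+l+2},g)\le d(b_{j'},b_k)+1/2^l$ from Lemma~\ref{metriclem}, and the structure inequality $|p(b_{j'})(j)-p(b_k)(j)|\le d(b_{j'},b_k)$; this produces only the negligible error $1/2^{k+l+3}$. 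In the second case one invokes the induction hypothesis $|p(u_k^{l-1})(j)-\varepsilon_j|\le 1/2^{l-1}$ together with $d_\Ur(g,u_k^{l-1})=d_\Ur(u_k^l,u_k^{l-1})=1/2^l$ from Lemma~\ref{metriclem}; since $1/2^{l-1}-1/2^l=1/2^l$, this contributes an error of at most $1/2^l$, exactly as in case (3) of Step~2 of the proof of Claim~\ref{correct_estim_main}. Altogether $|\gamma_j^l-\varepsilon_j|\le 1/2^l$ for every $j\in F$; plugging this into the displayed formula, using $|\varepsilon_j-\varepsilon_n|\le d_K(q_j,q_n)$, and for the lower estimate picking $j\in F$ with $d_K(q_j,q_n)$ below the net parameter, yields $|p(u_k^l)(n)-\varepsilon_n|\le 1/2^l$ (the constants in the net $N$ and in the choice of the $\delta_j^l$ being arranged precisely so that this balances out), closing the induction.

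I expect the truncation step to be the main obstacle: because $\gamma_j^l$ is $\delta_j^l$ clipped to $[m_j^l,M_j^l]$ and $m_j^l,M_j^l$ refer back to the previous point $u_k^{l-1}$, the approximation error can in principle double from step $l-1$ to step $l$, and the whole point is that the prescribed gap $d_\Ur(u_k^{l-1},u_k^l)=1/2^l$ between consecutive approximants is chosen exactly large enough to swallow this doubling, while the net $N$ and the rationals $\delta_j^l$ are chosen fine enough that the Lipschitz passage to an arbitrary $n$ costs nothing beyond the stated bound. This is the same delicate bookkeeping that occurs in Claim~\ref{correct_estim_main}, and the trichotomy for where $m_j^l$ (resp.\ $M_j^l$) is attained---at some $u_{j'}^{k+l+2}$, at a tuple on which $p$ had already been fixed to one of the rationals $\delta$, or at $u_k^{l-1}$---is treated exactly as there.
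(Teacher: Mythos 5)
Your strategy is the paper's: induction on $l$, the trichotomy $\gamma_j^l\in\{\delta_j^l,m_j^l,M_j^l\}$, the combination of $|p(u_{j'}^{k+l+2})(j)-p(a_{j'})(j)|<1/2^{k+l+3}$, $p(a_{j'})=p(b_{j'})$, the metric bounds from Lemma~\ref{metriclem} and the $1$-Lipschitz inequality $|p(b_{j'})(j)-p(b_k)(j)|\le d(b_{j'},b_k)$ for the terms coming from the old points, the induction hypothesis for the term coming from $u_k^{l-1}$, and the passage from the net to arbitrary $n$. Your closed-form identity $p(u_k^l)(n)=\max\{0,\max_{j\in F}(\gamma_j^l-d_K(q_j,q_n))\}$ is correct (the decreasing ordering of the $\gamma$'s makes the inductive definition of $f$ collapse to exactly this) and it usefully absorbs the paper's separate case $p(u_k^l)(n)=\eta_n^l$.

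However, the final accounting does not close with the constants you produce, and the failure is exactly at the step you flagged and then waved away. From the $u_k^{l-1}$ term you obtain only $|\gamma_j^l-\varepsilon_j|\le 1/2^{l-1}-1/2^l=1/2^l$ for $j\in F$. The upper estimate survives, since $p(u_k^l)(n)-\varepsilon_n\le\max_{j}(\gamma_j^l-\varepsilon_j)$ with no net cost; but the lower estimate forces you to pass to a net point $j_0$ with $d_K(q_{j_0},q_n)<1/2^{l+2}$, which costs an additional $2\cdot 1/2^{l+2}=1/2^{l+1}$, giving $\varepsilon_n-p(u_k^l)(n)\le 1/2^l+1/2^{l+1}>1/2^l$. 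The paper's proof aims for the sharper intermediate bound $|p(u_k^l)(j)-\varepsilon_j|\le 1/2^{l+1}$ on the net precisely so that the total is $1/2^l$, and in the $u_k^{l-1}$ subcase it in effect invokes $|p(u_k^{l-1})(j)-\varepsilon_j|\le 1/2^l$, a strengthening of the stage-$(l-1)$ statement that your hypothesis $1/2^{l-1}$ does not supply. To make the induction close you must strengthen the inductive statement rather than carry the symmetric bound $1/2^l$: for instance, track the two one-sided errors separately and observe that $\sup_n\bigl(p(u_k^l)(n)-\varepsilon_n\bigr)$ satisfies the recursion $E_l^+\le\max\bigl(1/2^{l+2},\,E_{l-1}^+-1/2^l\bigr)$ and hence stays at $1/2^{l+2}$ (the upper error does not degrade under net propagation), while only the lower error picks up the $1/2^{l+1}$ net cost; this yields the claimed $1/2^l$ with room to spare. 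As written, your proof establishes the upper half of the claim but not the lower half.
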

Once the claim is proved the previous assertion is clear so it remains to prove the claim.\\

\noindent \emph{Proof of the claim.} As in the proof of the analogous Claim \ref{correct_estim_main} we prove it by induction on $l$.\\

\noindent {\bf Step 1.}\\
Suppose $l=1$ (in some places where it may be confusing we shall still use the symbol $l$ eventhough it is equal to $1$ in Step 1). Let $n\in \Nat$ be arbitrary. There exists $i\in F$ such that $d_K(q_i,q_n)<1/2^{l+2}=1/2^2$. Since it follows $|\varepsilon_i-\varepsilon_n|<1/2^{l+2}$ and $|p(u_k^1)(n)-p(u_k^1)(i)|<1/2^{l+2}$ (the functions $q_i\rightarrow \varepsilon_i$ and $q_i\rightarrow p(u_k^1)(i)$ are $1$-Lipschitz) it suffices to check that for any $n\in F$ we have $|p(u_k^1)(n)-\varepsilon_n|\leq 1/2^{l+1}$.

From the definition of $p(u_k^1)$ above we have two cases:
\begin{itemize}
\item  $p(u_k^1)(n)=\gamma_n^1$. This splits into three subcases:
\begin{enumerate}
\item $p(u_k^1)(n)=\delta_n^1$. However we defined that $0\leq \delta_n^1-\varepsilon_n\leq 1/2^{l+1}=1/2^2$ so we are done.
\item $p(u_k^1)(n)=m_n^1$. In that case $m_n^1> \delta_n^1$ and we must check that $m_n^1\leq \varepsilon_n+1/2^{l+1}$.

From the definition there is some $i< k$ such that $m_n^1=p(u_i^{k+l+2})(n)-d(u_i^{k+l+2},u_k^1)$. However, from the assumption we have $|p(u_i^{k+l+2})(n)-p(a_i)(n)|<1/2^{k+l+2}$ and recall that $d(b_i,b_k)+1/(k\cdot 2^{l+1})\leq d(u_i^{k+l+2},u_k^1)$. Since $p(b_i)(n)\leq \varepsilon_n+d(b_i,b_k)$ (recall that $p(b_i)(n)=p(a_i)(n)$ and $\varepsilon_n=p(b_k)(n)$), putting these three inequalities together the inequality $m_n^1\leq \varepsilon_n+1/2^{l+1}$ follows.
\item $p(u_k^1)(n)=M_n^1$. In that case $M_n^1< \delta_n^1$ and we must check that $M_n^1\geq \varepsilon_n^1-1/2^{l+1}$. From the definition there is some $i< k$ such that $M_n^1=p(u_i^{k+l+2})(n)+d(u_i^{k+l+2},u_k^1)$. We again use the inequalities from the previous item, i.e. $|p(u_i^{k+l+2})(n)-p(a_i)(n)|<1/2^{k+l+2}$ and $d(b_i,b_k)+1/(k\cdot 2^{l+1})\leq d(u_i^{k+l+2},u_k^1)$. Moreover, since $\varepsilon_n(=p(b_k)(n))\leq p(b_i)(n)+d(b_i,b_k)$, putting these three inequalities together the inequality $M_n^1\geq \varepsilon_n-1/2^{l+1}$ follows.
\end{enumerate}
\item $p(u_k^1)(n)=\eta_n^1$. Then it follows from the definition that there exists some $i\in F$ such that $p(u_k^1)(i)=\gamma_i^1$ and $\eta_n^1=\gamma_i^1-d_K(q_n,q_i)>\delta_n^1$. Since we already know from the previous item that $\delta_n^1\geq \varepsilon_n-1/2^{l+1}$ and we know that $\eta_n^1>\delta_n^1$ we have that $\eta_n^1>\varepsilon_n-1/2^{l+1}$. Thus it suffices to check that $\eta_n^1\leq \varepsilon_n+1/2^{l+1}$. We again have three subcases:
\begin{enumerate}
\item $\gamma_i^1=\delta_i^1$. We have that $p(b_k)(i)(=\varepsilon_i)\leq p(b_k)(n)(=\varepsilon_n)+d_K(q_i,q_n)$. Since we know from the previous item that $\delta_i^1\leq \varepsilon_i+1/2^{l+1}$ and since $\eta_n^1=\delta_i^1-d_K(q_n,q_i)$ we get that $\eta_n^1\leq \varepsilon_n+1/2^{l+1}$.
\item $\gamma_i^1=M_i^1$. In that case we have that $M_i^1<\delta_i^1$ thus the inequality $\eta_n^1\leq \varepsilon_n+1/2^{l+1}$ follows from (1) immediately above.
\item $\gamma_i^1=m_i^1$. In that case there is some $j< k$ such that $\gamma_i^1=m_i^1=p(u_j^{k+l+2})(i)+d(u_j^{k+l+2},u_k^1)$. Since $p(b_j)(i)\leq \varepsilon_n(=p(b_k)(n))+d_K(q_i,q_n)+d(b_i,b_k)$, using the inequalities from (2) and (3) from the previous item we get that $\eta_n^1\leq \varepsilon_n+1/2^{l+1}$.

\end{enumerate}
\end{itemize}
{\bf Step 2.}\\
Now we assume that $l>1$ and for all $i<l$ the claim has been proved. Let again $n\in \Nat$ be arbitrary. Then there exists $i\in F$ such that $d_K(q_i,q_n)<1/2^{l+2}$. Thus it again suffices to check that for any $n\in F$ we have $|p(u_k^1)(n)-\varepsilon_n|\leq 1/2^{l+1}$. There are again two cases: either $p(u_k^l)(n)=\gamma_n^l$ or $p(u_k^l)(n)=\eta_n^l$. Both of them are treated similarly as in Step 1; let us illustrate it only on the former. We again have three subcases:
\begin{enumerate}
\item $p(u_k^l)(n)=\delta_n^l$. However we defined that $0\leq \delta_n^l-\varepsilon_n\leq 1/2^{l+1}$ so we are done.
\item $p(u_k^l)(n)=m_n^l$. In that case $m_n^l> \delta_n^l$ and we must check that $m_n^l\leq \varepsilon_n+1/2^{l+1}$.

From the definition there is some $u\in \{u_i^{k+l+2}:i< k\}\cup\{u_k^{l-1}\}$ such that $m_n^1=p(u)(n)-d(u,u_k^l)$. If $u\in \{u_i^{k+l+2}:i< k\}$ then the proof is completely analogous to the corresponding item in Step 1. So we assume that $u=u_k^{l-1}$. However, we have from the induction hypothesis that $|p(u_k^{l-1})(n)-\varepsilon_n|<1/2^l$ and since $d(u_k^{l-1},u_k^l)=1/2^l$ we have that $m_n^l\leq \varepsilon_n+1/2^{l+1}$.

\item $p(u_k^l)(n)=M_n^l$. In that case $M_n^l< \delta_n^l$ and we must check that $M_n^l\geq \varepsilon_n^1-1/2^{l+1}$. From the definition there is some $u\in \{u_i^{k+l+2}:i< k\}\cup\{u_k^{l-1}\}$ such that $M_n^l=p(u)(n)-d(u,u_k^l)$. Again as in (2) above, if $u\in \{u_i^{k+l+2}:i< k\}$ then the proof is completely analogous to the corresponding item in Step 1, so we assume that $u=u_k^{l-1}$. However, we again use the induction hypothesis that $|p(u_k^{l-1})(n)-\varepsilon_n|<1/2^l$ and since $d(u_k^{l-1},u_k^l)=1/2^l$ we have that $M_n^l\geq \varepsilon_n^1-1/2^{l+1}$.
\end{enumerate}
This finishes the proof of the claim and also of Proposition \ref{second_OPE}.\\
\bigskip

\noindent {\bf \underline{The Lipschitz function $F$}}\\

Let a Lipschitz constant $L\in \Rea^+$ be fixed. Let $Q=\{q_n:n\in \Nat\}$ be an enumeration of some fixed countable dense subset $Q$ of the Polish metric space $Z$.Let $p$ be an unary function with values in $\Nat$ and $d$ again a binary rational function. Let $\Lng$ be a language consisting of these functions.

We again define the (new) class $\Age$ of structures in the language $\Lng$ and then prove it satisfies the required properties of the Fra\" iss\' e theory.
\begin{defin}
A finite structure $A$ for the language $\Lng$ of cardinality $k$ belongs to $\Age$ if the following conditions are satisified
\begin{enumerate}
\item $A$ is again a finite rational metric space, i.e. it satisfies the same requirements as in definitions before. We will again interpret $d$ as a metric.
\item The function $p$ is a total function.

The intended interpretation of this function is that the value $p(a)$ determines the value of the universal continuous function $F$ at $a$ as follows: $F(a)=q_{p(a)}$. 
\item Here we put the restrictions on these structures which is just the demand that the desired function $F$ is $L$-Lipschitz. For every $a$ and $b$ from $A$ $d_Z(q_{p(a)},q_{p(b)})\leq L\cdot d(a,b)$. 
\end{enumerate}
\end{defin}

Now we verify that $\Age$ satisfies all properties needed to have a Fra\"i ss\' e limit. The countability and hereditary property are clear. To check joint embedding property, consider two structures $A$ and $B$. Consider again $m_A$ defined as $\max\{d(a,b):a,b\in A\}$, $m_B$ defined analogously for $B$ and moreover, $m_F=\max\{L\cdot d_Z(q_{p(a)},q_{p(b)}):a\in A,b\in B\}$. Set $m=\max\{m_A,m_B,m_F\}$ and define the metric on $A\coprod B$ as follows: for $a\in A$, $b\in B$, $d(a,b)=2m$. This again works.

Finally, we need to check the amalgamation property. So let $A,B,C\in \Age$ be structures and we assume that $A$ is a substructure of both $B$ and $C$. We set $D=A\coprod (B\setminus A)\coprod (C\setminus A)$. The metric is again extended in the standard way, i.e. for $b\in B$ and $c\in C$ we set $d(b,c)=\min\{d(b,a)+d(a,c):a\in A\}$.

We need to check that for any $b\in B$ and $c\in C$ we still have $d_Z(q_{p(b)},q_{p(c)})\leq L\cdot d(b,c)$. Let $a\in A$ be such that $d(b,c)=d(b,a)+d(a,c)$. We have $d_Z(q_{p(b)},q_{p(c)})\leq d_Z(q_{p(b)},q_{p(a)})+d_Z(q_{p(a)},q_{p(c)})\leq L\cdot d(b,a)+L\cdot d(a,c)=L\cdot d(b,c)$.\\

We again denote the Fra\" iss\' e limit as $U$. We define a function $\tilde{F}$ on $U$ to $Z$ as follows: $\tilde{F}(u)=q_{p(u)}$. It follows from our construction that $\tilde{F}$ is $L$-Lipschitz, thus we may extend $\tilde{F}$ to the completion $\Ur$; we denote $F$ this unique $L$-Lipschitz extension and claim that this is the desired universal $L$-Lipschitz function to the Polish metric space $Z$.
\subsection{The one-point extension property for $(\Ur,F)$}
We again prove a particular version of one-point extension property. The method how to use it to derive the universality, ultrahomogeneity and uniqueness is the same as before. By $\bar{\Age}$ we denote the class of all finite metric spaces equipped with an $L$-Lipschitz function into $Z$. Recall that we have the rational one-point extension property concerning structures from $\Age$.
\begin{prop}[One-point extension property]
Let $A$ be a finite substructure of $(\Ur,F)$ and let $B\in\bar{\Age}$ be such that $|B|=|A|+1$ and there is an embedding $\phi$ of $A$ into $B$. Then there exists an embedding $\psi$ of $B$ into $(\Ur,F)$ such that $\mathrm{id}=\psi \circ \phi$.
\end{prop}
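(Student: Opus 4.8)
The plan is to follow the template of the proofs of Propositions \ref{main_OPE} and \ref{second_OPE}: dispose of the trivial case first, then realise the new point of $B$ as the limit of a rational Cauchy sequence in $U$ built by alternately applying Lemma \ref{metriclem} (to control the metric) and the rational one-point extension property for $\Age$ (to realise the prescribed value of $p$). The only genuine work is checking, at each stage, that the single new value $p(g)\in\Nat$ can be chosen consistently.

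First I would treat $A=\emptyset$, $B=\{b_1\}$, where the task is just to find $a_1\in\Ur$ with $F(a_1)=g(b_1)$. Since $Q$ is dense in $Z$, pick for each $j$ an index $n_j$ with $d_Z(q_{n_j},g(b_1))<L/2^{j+2}$; then for $i<j$ one has $d_Z(q_{n_i},q_{n_j})<L/2^{i+1}\le L\bigl(1/2^i-1/2^j\bigr)$, so the finite rational metric spaces $\{u_1^1,\dots,u_1^j\}$ with pairwise distances $1/2^i-1/2^{i'}$ and $p(u_1^i)=n_i$ all belong to $\Age$. Iterating the rational one-point extension property gives a sequence $(u_1^j)_j\subseteq U$ with $p(u_1^j)=n_j$ and $d_\Ur(u_1^j,u_1^{j+1})=1/2^{j+1}$; its limit $a_1$ satisfies $F(a_1)=\lim_j q_{n_j}=g(b_1)$ because $F$ is continuous.

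For the general step, enumerate $A=\{a_1,\dots,a_{k-1}\}$ and $B=\{b_1,\dots,b_k\}$ so that $\phi(a_i)=b_i$ (hence $F(a_i)=g(b_i)$), and fix for each $i<k$ a sequence $(u_i^j)_j\subseteq U$ with $u_i^j\to a_i$ and $d_\Ur(u_i^j,a_i)<1/2^j$ strictly decreasing. Exactly as in Proposition \ref{main_OPE}, I would build $(u_k^l)_l$ by induction: at step $l$ take $S_l=\{u_1^{k+l+2},\dots,u_{k-1}^{k+l+2},u_k^{l-1}\}$ (omit the last point if $l=1$) with $p$ inherited from $U$, apply Lemma \ref{metriclem} to obtain a rational metric extension $M_l=S_l\cup\{g\}$ with $d(b_i,b_k)+(2i-1)/(k\cdot2^{l+1})\le d_\Ur(u_i^{k+l+2},g)\le d(b_i,b_k)+1/2^l$ for all $i<k$ and $d_\Ur(u_k^{l-1},g)=1/2^l$ when $l>1$, and then choose $p(g)=n\in\Nat$. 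The only freedom is this one index: I would require $d_Z(q_n,g(b_k))<\varepsilon_l$, where $\varepsilon_l$ is small enough to be below $L\bigl(1/(k\cdot2^{l+1})-1/2^{k+l+2}\bigr)$, below $L/2^l-\varepsilon_{l-1}$ (the constraint coming from $u_k^{l-1}$), and below $L/2^{l+2}$; all three bounds are positive (for the first, because $2^{k+1}>k$), density of $Q$ supplies such a $q_n$, and these bounds are precisely what makes $d_Z(q_n,q_{p(v)})\le L\cdot d_\Ur(g,v)$ hold for every $v\in M_l\setminus\{g\}$ — using $d_Z(g(b_k),q_{p(u_i^{k+l+2})})\le L\,d(b_i,b_k)+L/2^{k+l+2}$, the $L$-Lipschitzness of $g$ on $B$, and the induction hypothesis $d_Z(q_{p(u_k^{l-1})},g(b_k))<\varepsilon_{l-1}$. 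Thus $M_l$ with $p(g)=n$ lies in $\Age$ and the rational one-point extension property yields $u_k^l\in U$ realising $g$.

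Since $d_\Ur(u_k^l,u_k^{l+1})=1/2^{l+1}$, the sequence $(u_k^l)_l$ is Cauchy; its limit $a_k$ satisfies $d_\Ur(a_i,a_k)=\lim_l d_\Ur(u_i^{k+l+2},u_k^l)=d(b_i,b_k)$ (squeeze from Lemma \ref{metriclem}) and $F(a_k)=\lim_l q_n=g(b_k)$ since $\varepsilon_l\to0$. Setting $\psi(b_i)=a_i$ then gives the required embedding of $B$ into $(\Ur,F)$ extending the inclusion. I expect the only point needing care is the bookkeeping in the previous paragraph — i.e.\ verifying that a point of $Q$ sufficiently close to $g(b_k)$ automatically respects all the Lipschitz constraints inside $M_l$; unlike the $(\Ur,C)$ case, no finite net in $Z$ is needed, because the value of the function at a point is a single element of $Q$, so the argument is actually lighter than that of Proposition \ref{second_OPE}.
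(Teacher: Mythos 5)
Your proposal is correct and follows essentially the same route as the paper: the empty case via a rational Cauchy sequence whose $p$-values converge to $g(b_1)$ in $Z$, and the general case by alternating Lemma \ref{metriclem} with the rational one-point extension property, using the metric slack $L/(k\cdot 2^{l+1})$ from the lemma to absorb the error $L/2^{k+l+2}$ in the function values (the paper fixes a target sequence $(f_k^j)_j$ with modulus $L/(k\cdot 2^{j+2})$ in advance rather than choosing $\varepsilon_l$ adaptively, but this is only a bookkeeping difference). Your arithmetic for the three constraints on $\varepsilon_l$ is sound, and deriving the bound $d_Z(g(b_i),F(u_i^{k+l+2}))<L/2^{k+l+2}$ directly from the Lipschitzness of $F$ is a mild simplification of the paper's extra hypothesis on the approximating sequences.
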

\noindent\emph{Proof of the proposition.}
We again start with the case when $A$ is empty and $B=\{b_1\}$. We just need to find some $a_1\in \Ur$ such that $F(a_1)=F(b_1)$. Choose some sequence $(f_1^l)_l\subseteq \Nat$ such that for every $n\in \Nat$ $d_Z(q_{f_1^n},q_{f_1^{n+1}})\leq L/2^{n+1}$ and $q_{f_1^l}\to F(b_1)$. Using the rational one-point extension property we find a sequence $(u_1^j)_j\subseteq$ such that for every $n\in \Nat$ we have $p(u_1^n)=f_1^n$ and $d_\Ur(u_1^n,u_1^{n+1})=1/2^{n+1}$. This is possible and we have that $F(a_1)=F(b_1)$ where $a_1=\lim_n u_1^n$.\\

We now assume that $A$ is non-empty. Let us enumerate $A$ as $\{a_1,\ldots,a_{k-1}\}$ and $B$ as $\{b_1,\ldots,b_k\}$ so that the embedding $\phi$ of $A$ into $B$ sends $a_i$ to $b_i$ for every $i< k$. We shall find a new point $a_k\in \Ur$ and define an embedding $\psi:B\hookrightarrow (\Ur,F)$ sending $b_i$ to $a_i$ for every $i\leq k$. We will find a Cauchy sequence of elements from $U$ such that the limit will be this desired point $a_k$. For each $l<k$ let us choose a converging sequence $(u_l^j)_j\subseteq U$ of elements from the Fra\" iss\' e limit such that $\lim_j u_l^j=a_l$, $d_\Ur(u_l^j,a_l)<1/2^j$, for $i<j$ we have $d_\Ur(u_l^j,a)<d_\Ur(u_l^i,a)$, and moreover for every natural numbers $i>j$ we have $d_Z(F(u_l^j),F(u_l^i))<L/(k\cdot 2^{j+2})$. For every $l< k$ and $i\in\Nat$ let $f_l^i\in \Nat$ be such that $F(u_l^i)=q_{f_l^i}$.

Now, let us a choose a sequence $(f_k^j)_j\subseteq \Nat$ of natural numbers such that $\forall j\in \Nat\forall i>j(d_Z(q_{f_k^j},q_{f_k^i})<L/(k\cdot 2^{j+2}))$ and $q_{f_k^j}\to F(b_k)$. Consider a structure $S_1=\{u_1^{k+3},\ldots,u_{k-1}^{k+3}\}$. For every $a\in S_1$ we set $p(a)=n$ iff $F(a)=q_n$, thus $S_1\in \Age$. We use Lemma \ref{metriclem} to find a metric extension $M_1=\{u_1^{k+3},\ldots,u_{k-1}^{k+3},g\}$ such that for all $i< k$ we have $d(b_i,b_k)+1/(k\cdot 2^2)\leq d_\Ur(u_i^{k+3},g)\leq d(b_i,b_k)+1/2$. We extend $M_1$ into a structure $V_1$ from $\Age$. We just need to define $p$ on $g$. We set $p(g)=f_k^1$. To check that this is consistent we need to verify $d_Z(q_{f_k^1},q_{f_j^{k+3}})\leq L\cdot d(u_j^{k+3},g)$ for all $j<k$. However, since $d_Z(F(b_i),f_j^{k+3})\leq L/(k\cdot 2^{k+5}))$ and $d(b_j,b_k)+1/(k\cdot 2^2)\leq d_\Ur(u_j^{k+3},g)$ it follows that $$d_Z(q_{f_k^1},q_{f_j^{k+3}})\leq d_Z(F(b_j),F(b_k))+L/(k\cdot 2^{k+5})+L/(k\cdot 2^3)\leq L\cdot d(b_j,b_k)$$ $$+L/(k\cdot 2^2)) \leq L\cdot d(u_j^{k+3},g)-L/(k\cdot 2^2) +L/(k\cdot 2^2))\leq L\cdot d(u_j^{k+3},g)$$
Thus $V_1\in \Age$ and there is some $u_k^1\in U$ playing the role of $g$.\\

Suppose we have already constructed $u_k^1,\ldots,u_k^{l-1}\subseteq U$. We consider a structure $S_l=\{u_1^{k+l+2},\ldots,u_{k-1}^{k+l+2},u_k^{l-1}\}$ with an obvious definition of $p$ on elements of $S_l$. We use again Lemma \ref{metriclem} to obtain a metric extension $M_l=\{u_1^{k+l+2},\ldots,u_{k-1}^{k+l+2},u_k^{l-1},g\}$ such that for all $i, k$ we have $d(b_i,b_k)+1/(k\cdot 2^{l+1})\leq d_\Ur(u_i^{k+l+2},g)\leq d(b_i,b_k)+1/2^l$. We need to define $p$ on $g$; we set $p(g)=f_k^l$. The verification that it is consistent is the same as above. So we obtain some $u_k^l\in U$ playing the role of $g$. This finishes the induction and the proof.
\end{proof}

\end{document}